\documentclass[11pt]{amsart}
\usepackage[T1]{fontenc}
\usepackage[utf8]{inputenc}
\usepackage{mathpazo}
\usepackage{amsmath,amssymb,amsthm,mathtools}
\usepackage[a4paper,margin=28mm]{geometry}
\usepackage{microtype,booktabs,array,enumitem,needspace}
\usepackage[dvipsnames]{xcolor}
\usepackage[colorlinks=true,linkcolor=MidnightBlue,citecolor=MidnightBlue,urlcolor=MidnightBlue]{hyperref}
\hypersetup{pdftitle={Regular bundles on orbifolds II: ranks and obstructions},pdfauthor={Enrique Becerra and Ernesto Lupercio}}
\setlist[enumerate]{label=\textup{(\roman*)},leftmargin=*,itemsep=3pt}
\numberwithin{equation}{section}
\newtheorem{theorem}{Theorem}[section]
\newtheorem{proposition}[theorem]{Proposition}
\newtheorem{lemma}[theorem]{Lemma}
\newtheorem{corollary}[theorem]{Corollary}
\newtheorem{maintheorem}{Theorem}

\theoremstyle{definition}
\newtheorem{definition}[theorem]{Definition}
\newtheorem{example}[theorem]{Example}
\newtheorem{question}[theorem]{Question}
\theoremstyle{remark}
\newtheorem{remark}[theorem]{Remark}
\newcommand{\C}{\mathbb C}
\newcommand{\R}{\mathbb R}
\newcommand{\Z}{\mathbb Z}

\newcommand{\G}{\mathcal G}

\newcommand{\rk}{\operatorname{rk}}
\newcommand{\id}{\operatorname{id}}
\newcommand{\lcm}{\operatorname{lcm}}
\newcommand{\ind}{\operatorname{ind}}
\newcommand{\per}{\operatorname{per}}
\newcommand{\smin}{\operatorname{r}_{\min}}
\newcommand{\ord}{\operatorname{ord}}
\newcommand{\eps}{\varepsilon}
\newcommand{\Pic}{\operatorname{Pic}}

\newcommand{\res}{\operatorname{res}}
\newcommand{\Rsharp}{R^{\sharp}}
\newcommand{\floor}[1]{\left\lfloor#1\right\rfloor}
\newcommand{\ceil}[1]{\left\lceil#1\right\rceil}
\newcommand{\tK}{\widetilde K}
\title[Regular bundles II: ranks and obstructions]{Regular bundles on orbifolds II:\\ranks and obstructions}
\author{Enrique Becerra}
\address{Departamento de Matem\'aticas, CINVESTAV, Mexico City, Mexico}
\email{ebecerra@math.cinvestav.mx}
\author{Ernesto Lupercio}
\address{Departamento de Matem\'aticas, CINVESTAV, Mexico City, Mexico}
\email{lupercio@math.cinvestav.mx}
\date{}
\subjclass[2020]{Primary 57R18; Secondary 19L20, 19L47, 19L50, 55R91}
\keywords{Orbifold, orbispace, regular bundle, Adams operation, twisted K-theory, period--index problem}
\begin{document}
\begin{abstract}
A regular complex bundle on an orbispace has fibres that are positive multiples of the stabilizers' regular representations. For stabilizers of order at most two, the sharp uniform rank bound in dimension five is $16$ for paracompact orbispaces and $4$ for closed oriented smooth orbifolds. Rank $16$ occurs with dense free locus and minimum regular rank $4$ on the nonfree locus. We also prove the sharp bound $16$ for closed oriented smooth six-orbifolds. We compute the stable regular index of cyclic gerbes over finite connected CW complexes as the least common multiple of the indices of their character twists. For paracompact orbispaces, optimal integral Adams multipliers give explicit rank bounds in terms of dimension and stabilizer orders. The least positive rank of an integral exterior-power expression with regular fibre and zero reduced class for every regular rank-four bundle on $S^4\times B\mu_2$ is $32$.
\end{abstract}
\maketitle

\section{Introduction}\label{sec:intro}

Let $X$ be an orbispace with finite stabilizer groups $G_x$. A complex vector bundle $E$ on $X$ is \emph{regular} if
\[
 E_x\cong\C[G_x]^{\oplus\rk(E)/|G_x|}
\]
at every point. We require its rank to be positive and constant. How small can that rank be, and how does it depend on dimension, isotropy, and the topology of $X$? We call the locus of points with nontrivial stabilizer the \emph{nonfree locus}.

For stabilizers of order at most two, the optimal uniform bound for the rank of a regular bundle in dimension five is $16$ for paracompact orbispaces (Theorem~\ref{thm:order_two}) and $4$ for closed oriented smooth orbifolds (Theorem~\ref{thm:oriented_five}). The example requiring rank $16$ has a dense free locus, and its entire nonfree locus has minimum regular rank $4$. The attachment of free cells therefore contributes a further obstruction.

For regular rank-four bundles on $S^4\times B\mu_2$, consider a single integral expression in exterior powers whose class has regular fibre and zero reduced part for every such bundle. We prove that the least positive rank of such an expression is $32$ (Theorem~\ref{thm:nonlinearintro}).

In Part~I \cite{BL}, we constructed regular bundles of explicit rank on smooth orbifolds with bounded dimension and stabilizer orders, including noncompact and ineffective orbifolds. The unitary frames of these bundles give quotient presentations. Here we study how small these ranks can be. We determine the optimal integral Adams multipliers in the cellular construction and extend the resulting bounds to paracompact orbispaces through Pardon's approximation theorem. The stable regular index relates regular bundles to the topological period--index problem and yields sharper bounds for cyclic gerbes.

The rank problem has a classical analogue in the immersion problem for manifolds. In positive codimension, the Smale--Hirsch theorem reduces the existence of an immersion into Euclidean space to a bundle monomorphism from the tangent bundle into a trivial bundle \cite{HirschImmersions}. Adem and Gitler used secondary characteristic classes to obtain nonimmersion results for projective spaces \cite{AdemGitler}. Here we seek global $K$-classes with prescribed regular fibre representations. The connectivity of complex Stiefel manifolds then controls the rank needed to represent these classes by actual bundles.

For smooth orbifolds, regular bundles also give embeddings into linear quotient stacks. If $E$ is a smooth Hermitian regular bundle of rank $r$, its unitary frame manifold $P$ has finitely many $U(r)$-orbit types. The equivariant embedding theory of Mostow, Palais, and Wasserman \cite{Mostow,Palais,Wasserman} gives an embedding $P\hookrightarrow V$ into a finite-dimensional real representation, hence a representable smooth embedding
\[
 X\simeq[P/U(r)]\hookrightarrow[V/U(r)].
\]
The standard associated complex bundle of rank $r$ restricts to $E$. Thus the minimum regular rank is the least $r$ for which such an embedding exists and the pullback of the standard bundle is regular; $V$ may vary (Proposition~\ref{prop:linear_embedding}).

\subsection{The rank bound}
We use Pardon's notion of an orbispace \cite{Pardon}: a separated topological stack locally presented as the quotient of a Hausdorff space by a finite group. Paracompactness and dimension refer to its coarse space; dimension at most $n$ means that every open cover has a locally finite refinement whose nerve has dimension at most $n$. Suppose that $\dim|X|\leq n$ and that the stabilizer orders are bounded. Put
\begin{equation}\label{eq:invariants}
 L_o=\lcm_{x\in X}|G_x|,\qquad
 L_e=\lcm_{x\in X}\exp(G_x),\qquad
 t=t(n)=\max\{0,\floor{(n-1)/2}\}.
\end{equation}
For a positive integer $a$ and $L\geq2$, set
\[
 a_L=\prod_{p\mid L}p^{v_p(a)},\qquad
 \delta_L=\begin{cases}1,&v_2(L)=1,\\0,&v_2(L)\ne1.\end{cases}
\]
Define
\begin{equation}\label{eq:rank}
 \Rsharp(n;L_o,L_e)=
 L_oL_e^t(t!)_{L_e}2^{\delta_{L_e}\floor{t/2}}
 \quad(L_e\geq2),
\end{equation}
and set $\Rsharp(n;1,1)=1$.

\begin{maintheorem}\label{thm:main}
Every paracompact orbispace satisfying the preceding hypotheses has a regular complex bundle of rank $\Rsharp(n;L_o,L_e)$. If $X$ is a Hausdorff second-countable smooth orbifold without boundary, the bundle can be chosen smooth and Hermitian. Its unitary frame bundle gives a presentation
\[
 X\simeq[M/U(\Rsharp(n;L_o,L_e))]
\]
by a smooth manifold with a proper locally free action. The manifold $M$ is compact if and only if $|X|$ is compact.
\end{maintheorem}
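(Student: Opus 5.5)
The plan is to build a global $K$-class with regular fibres by a cellular induction, and then to realize that class by an honest bundle using the stable range. The first step is to reduce the problem. By Pardon's approximation theorem, a paracompact orbispace $X$ with $\dim|X|\le n$ and bounded stabilizers admits a map to (or is approximated by) a locally finite orbi-CW complex of dimension at most $n$ with the same stabilizer groups, up to conjugacy. We build the bundle there and pull it back. For a Hausdorff second-countable smooth orbifold, we instead use an orbifold triangulation. The continuous bundle then carries a smooth Hermitian structure, because homotopy classes of bundles on smooth orbifolds are represented by smooth ones. The unitary frame bundle $M$ of a regular bundle of rank $r$ has free $U(r)$-stabilizers on the total frame space. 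Indeed, regularity makes each $G_x$ act freely on frames, since $\C[G_x]$ is faithful. So the $U(r)$-action is locally free and proper, and $X\simeq[M/U(r)]$. Moreover, $M\to|X|$ is proper with compact fibres, so $M$ is compact exactly when $|X|$ is.

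The core step is the construction over the skeleta. Over each orbi-cell $G\backslash(G/H\times D^k)$ we need the fibre $\C[H]^{\oplus m}$. On the $0$-skeleton we take $L_o/|G_x|$ copies of $\C[G_x]$. Extending a virtual class with prescribed fibres across cells of dimension $k$ meets an obstruction in $\tilde K_H(S^{k-1})\cong R(H)\otimes\tilde K(S^{k-1})$. This group vanishes for odd $k$. For even $k=2j$ it is a copy of $R(H)$, and we compare it with the restriction of the regular representation. The device for killing these obstructions is to apply an integral Adams-type operation $\psi$ to the class built so far. On $\tilde K(S^{2j})$ the operation $\psi^{\ell}$ acts by $\ell^j$. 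On the representation part, for $\ell\equiv 1 \bmod L_e$ it fixes characters of all $G_x$, so it preserves regular fibres. Combining these, the obstruction at stage $j$ becomes divisible by an appropriate multiplier and can then be cancelled.

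We would use the paper's earlier result on optimal integral Adams multipliers. It says that the gcd of the integral combinations $\sum c_\ell \psi^\ell$ that fix $R(H)$-regular fibres and kill degree-$2j$ reduced classes, taken over $j\le t$, is $L_e^t(t!)_{L_e}2^{\delta_{L_e}\floor{t/2}}$. The $t!$ factor comes from the denominators in the Chern character, and the $2$-adic correction appears when $v_2(L_e)=1$. Multiplying by $L_o$ gives $\Rsharp(n;L_o,L_e)$ as the virtual rank of a $K$-class whose fibres are all regular.

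The final step converts the virtual class into a genuine bundle. The only even-dimensional obstructions that matter lie in dimensions at most $n$, which is where $t=\floor{(n-1)/2}$ enters; the top cell of dimension $n$ is handled by stability. The fibre of $BU(r)_{H\text{-fixed}}$ is $\prod U(r_i)$-type, and its Stiefel connectivity $2r-1\ge n$ lets a class of rank $r\ge n/2$ be destabilized equivariantly on the fixed-point sets of each orbit type. I expect the main obstacle to be the second step: tracking the obstruction classes equivariantly over cells of all isotropy types, and showing that the optimal multiplier really is the product stated, in particular the $2^{\delta\floor{t/2}}$ factor. There I would first try to reduce to cyclic subgroups via Artin induction and compute $2$-adically on $S^{2j}\times B\mu_2$.
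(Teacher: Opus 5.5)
Your architecture is the paper's: Pardon's representable approximation by a locally finite complex of groups, cellular induction with integral combinations of $\psi^k$ for $k\equiv1\pmod{L_e}$, Stiefel-range realization, then smoothing and unitary frames. However, the step that actually produces the number $\Rsharp(n;L_o,L_e)$ is missing. You attribute the product $L_e^t(t!)_{L_e}2^{\delta_{L_e}\floor{t/2}}$ to an ``optimal multiplier'' result and describe it as a single gcd over $j\le t$. You then list its proof as the main open obstacle, to be attacked by Artin induction and Chern-character denominators. Optimality plays no role in this theorem. What is needed is one operation per stage plus an arithmetic computation.

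At stage $s$, let $g_s(L)=\gcd\{k^s-1:k\geq1,\ k\equiv1\pmod L\}$. A gcd of a set of integers is an integral combination $\sum_ib_i(k_i^s-1)$ of finitely many of them. So $\Theta_s=(g_s+\sum_ib_i)\id-\sum_ib_i\psi^{k_i}$ has coefficient sum $g_s$, and its $k^s$-weighted coefficient sum is $g_s-\sum_ib_i(k_i^s-1)=0$. Two facts then give $\Theta_s(W+\beta_sa)=g_s(L_e)W$ on each boundary:
\begin{itemize}
\item $\psi^k\beta_s=k^s\beta_s$;
\item $\psi^k=\id$ on $R(J)$ whenever $\exp(J)\mid L_e$, directly from $\chi_{\psi^ka}(g)=\chi_a(g^k)$, with no reduction to cyclic groups.
\end{itemize}
Lifting the exponent, minimized over $k\in1+L\Z$, computes the valuations:
\begin{itemize}
\item $v_p(g_s(L))=v_p(L)+v_p(s)$ for odd $p\mid L$, and for $p=2$ with $4\mid L$.
\item If $2\parallel L$, $v_2(g_s(L))$ is $1$ for odd $s$ and $2+v_2(s)$ for even $s$. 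This is because $v_2(k-1)+v_2(k+1)\geq3$ for odd $k$, with equality at $k\equiv3,5\pmod 8$, and both residues occur in $1+L\Z$.
\item Primes $p\nmid L$ drop out by choosing $k\equiv0\pmod p$.
\end{itemize}
The product over $1\leq s\leq t$ is $L^t(t!)_L2^{\delta_L\floor{t/2}}$. The factor $(t!)_L$ is just $\prod_s s_L$, not a Chern-character denominator.

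Three further points need repair.
\begin{itemize}
\item \emph{Parity.} Your parity is reversed. A $k$-cell is attached along $S^{k-1}$, so $\tK^0(S^{k-1})$ vanishes for even $k$, and the obstruction $\beta_sa$ sits on the boundaries of the $(2s+1)$-cells. For odd $n$, the top cells are exactly where $\Theta_t$ is needed, not something handled by stability.
\item \emph{Deferred realization.} You cannot postpone realization to the end. The class $\Theta_sx=[A]-[D]$ has zero reduced boundary class, but $A$ and $D$ separately need not, so neither extends across the cells. Moreover, on an infinite locally finite complex the paper deliberately avoids any $K$-theoretic exact sequence for bounded-rank bundles. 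The paper instead realizes at every stage:
  \begin{itemize}
  \item The multiplicities satisfy $u_\rho\geq\prod_{i\leq s}g_i\geq2^s$, so $D$ embeds in $A$ over $X^{2s}$.
  \item On each $S^{2s}\times BJ$, the multiplicity bundles of $A/D$ have zero reduced class and $2u_\rho>2s$, so sphere cancellation makes them trivial.
  \item Then $A/D$ clutches to a constant disk bundle, and the same bound handles the even cells.
  \end{itemize}
\item \emph{Representability.} Regular pullback needs Pardon's map to be representable. Its cell groups must inject into vertex stabilizers, so that their orders and exponents divide $L_o$ and $L_e$. This is more than ``the same stabilizer groups up to conjugacy''.
\end{itemize}
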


If only $|G_x|\leq B$ is specified, one can use $L_o=L_e=L=\lcm(1,\ldots,B)$ in the formula. These numbers need only be common multiples of the actual orders and exponents. For fixed $L\geq2$, put
\begin{equation}\label{eq:constant}
 C(L)=L\,2^{\delta_L/2}\prod_{p\mid L}p^{1/(p-1)}.
\end{equation}
Then $\Rsharp(n;L_o,L_e)\leq L_oC(L_e)^t$, an exponential bound in dimension for fixed isotropy.

At the $(2s+1)$-cell attachment, the proof replaces the multiplier $(1+L)^s-1$ of Part~I by
\begin{equation}\label{eq:gdef}
 g_s(L)=\gcd\{k^s-1:k\geq1,\ k\equiv1\pmod L\}.
\end{equation}
We set $g_s(1)=1$ for trivial isotropy. For $L\geq2$, an integral combination of at most three Adams operations attains $g_s(L)$ and cancels the reduced boundary class whenever the stabilizer exponent divides $L$. We compute this multiplier, including its exceptional $2$-primary factor, and realize the resulting virtual classes by actual bundles. The construction works on locally finite simplicial complexes of groups. Pardon's representable approximation theorem \cite[Proposition 4.9]{Pardon} then gives Theorem~\ref{thm:main}.

\subsection{Operation bounds and geometric obstructions}
Allowing positive Adams indices outside the congruence class in \eqref{eq:gdef} gives the same least multiplier when all groups of order at most $B$ are included.

\begin{maintheorem}\label{thm:additiveintro}
Fix $B\geq2$ and $s\geq1$, and let $L=\lcm(1,\ldots,B)$. Suppose a finite integral sum $\Theta=\sum_kn_k\psi^k$, with $k\geq1$, kills every reduced class $\beta_s a$ on $S^{2s}\times BG$ for every group $G$ of order at most $B$. If $\Theta(\C[G])=c\C[G]$ for all these groups, then $g_s(L)\mid c$. Equality is attainable.
\end{maintheorem}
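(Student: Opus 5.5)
We would prove the divisibility by turning both hypotheses on $\Theta$ into linear conditions on the coefficients $n_k$, restricted to cyclic groups $G=\Z/d$ with $d\le B$, and then work one prime at a time. The operation $\psi^k$ multiplies the Bott class $\beta_s$ by $k^s$ and acts on representations by $\chi_\rho(g)\mapsto\chi_\rho(g^k)$. Hence
\[
 \Theta(\beta_s a)=\beta_s\sum_k n_k k^s\psi^k(a),
\]
and it vanishes for all $a$ if and only if
\[
 \sum_k n_k k^s\chi_a(g^k)=0
\]
for all $a$ and all $g$. (For representations of $G$ this holds in $K$ of $S^{2s}\times BG$ via the Atiyah--Segal map; we only need actual representations $a$.)

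On $G=\Z/d$ with $g$ a generator, the functions $k\mapsto\chi_a(g^k)$ span all functions of $k\bmod d$. So the kill hypothesis gives, for every $d\le B$ and every residue $r$,
\[
 \sum_{k\equiv r\ (d)}n_k k^s=0. \tag{A}
\]
Next use $\chi_{\C[G]}(h)=|G|$ for $h=1$ and $0$ otherwise. Then $\Theta(\C[G])=c\,\C[G]$ says $\sum_k n_k=c$ and, for each $1<d\le B$ (an element of order $d$), $\sum_{d\mid k}n_k=0$. Taking $d=p$ for each prime $p\mid L$ (note $p\le B$), we get
\[
 \sum_{p\nmid k}n_k=c \quad\text{for every prime } p\mid L. \tag{B}
\]

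Write $g=g_s(L)=\prod_{p\mid L}g_p$; we expect $g$ to be supported on primes dividing $L$, from the computation of $g_s(L)$ earlier in the paper. Fix $p$ and put $q=p^{v_p(L)}\le B$. The key lemma is that for $p\nmid k$ the residue $k^s\bmod g_p$ depends only on $k\bmod q$. To prove it, take $k\equiv k'\pmod q$, both prime to $p$, and adjust them by CRT to be $\equiv1$ modulo the prime-to-$p$ part of $L$; this changes neither $k\bmod q$ nor the statement. Choose $v$ with $vk\equiv1\pmod L$. Then $vk$ and $vk'$ are both $\equiv1\pmod L$, so $(vk)^s\equiv(vk')^s\equiv1\pmod{g_p}$. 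Since $v$ is prime to $p$, it is invertible mod $g_p$, which gives $k^s\equiv k'^s$. Define $\varphi$ on $\Z/q$ by $\varphi(k)=(k^s)^{-1}\bmod g_p$ for $p\nmid k$ and $\varphi(k)=0$ otherwise, lifted to integers. Multiplying (A) for $d=q$ by $\varphi(r)$ and summing over $r$ gives
\[
 0=\sum_k n_kk^s\varphi(k)\equiv\sum_{p\nmid k}n_k=c\pmod{g_p},
\]
using (B). By CRT, $g_s(L)\mid c$.

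For attainability, we would invoke the earlier construction: an integral combination of at most three Adams operations $\psi^k$ with $k\equiv1\pmod L$ realizes $c=g_s(L)$. For such $k$, $\psi^k$ fixes $\C[G]$ whenever $\exp(G)\mid L$, which holds for $|G|\le B$. It also kills $\beta_sa$, because $k^s$ becomes a common factor and the combination $\sum n_k k^s$ is taken to vanish while $\sum n_k=g_s(L)$. That combination must be checked against the paper's normalization. The main obstacle is the well-definedness lemma for $k^s\bmod g_p$, in particular making sure $g_p$ has no prime-to-$p$ part, so that the inverse and the CRT adjustment are legitimate, including the exceptional $2$-primary factor. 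We would verify this against the explicit formula for $g_s(L)$ first.
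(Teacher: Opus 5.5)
Your proposal is correct and follows the paper's proof. As there, you restrict to $C_q$ with $q=p^{v_p(L)}\le B$, extract the residue-class identities and $c=\sum_{p\nmid k}n_k$, and use that for $p\nmid k$ the power $k^s$ modulo the $p$-part of $g_s(L)$ depends only on $k \bmod q$; you then multiply over $p\mid L$ and attain equality with the operation of Proposition~\ref{prop:least}. The one difference is that you prove this congruence directly from the gcd definition of $g_s(L)$, by multiplying by an inverse of $k$ modulo $L$, which handles the exceptional $2$-primary case uniformly, whereas the paper appeals to the valuation computation of Proposition~\ref{prop:valuations} (which also settles your worry that $g_s(L)$ might have prime factors not dividing $L$).
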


Here $\beta_s$ generates $\tK^0(S^{2s})$, and $a$ ranges over $R(G)$. A smaller family of groups can admit a smaller multiplier, as Example~\ref{ex:q8} shows.

We next consider integral exterior-power expressions on regular rank-four bundles over $S^4\times B\mu_2$. An \emph{integral exterior-power expression in $E$} means a polynomial with integer coefficients in $[E],\lambda^2[E],\ldots,\lambda^{\rk E}[E]$, with the integer coefficients represented by trivial virtual bundles.

\begin{maintheorem}\label{thm:nonlinearintro}
Let $\eps$ be the nontrivial character of $\mu_2$, and consider rank-four bundles on $S^4\times B\mu_2$ with classes
\[
 [E]=2+2\eps+\beta_2(u+v\eps),\qquad u,v\in\Z.
\]
If one integral exterior-power expression has regular fibre and zero reduced class for every pair $(u,v)$, its rank is divisible by $32$. In particular, a positive rank is at least $32$. Rank $32$ is attained by $6E^{\otimes2}-E^{\otimes3}$.
\end{maintheorem}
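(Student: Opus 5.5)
We would reduce the statement to arithmetic of integer polynomials evaluated at two points. Write $\Lambda^i=\lambda^i[E]$, so the expression is $P(\Lambda^1,\ldots,\Lambda^4)$ with $P\in\Z[y_1,\ldots,y_4]$. Then work in $K^0(S^4\times B\mu_2)$, using only the subring $R(\mu_2)\oplus\beta_2R(\mu_2)$ with $\beta_2^2=0$. Put $x=2+2\eps$ and $y=u+v\eps$, so $[E]=x+\beta_2y$.

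\textbf{Step 1: $\lambda$-operations on the square-zero part.} On the square-zero ideal $\beta_2R(\mu_2)$, the $\lambda$-operations are determined by the Adams operations through Newton's identities. Since $\psi^j\beta_2=j^2\beta_2$, this gives
\[
\lambda^j(\beta_2y)=(-1)^{j-1}j\,\beta_2\psi^j(y).
\]
Hence
\[
\Lambda^i=\lambda^i(x)+\beta_2\sum_{j=1}^{i}(-1)^{j-1}j\,\lambda^{i-j}(x)\,\psi^j(y),
\qquad \psi^j(y)=u+v\eps^j .
\]
Since $\lambda_t(x)=(1+t)^2(1+\eps t)^2$, we have $\lambda^i(x)=0$ for $i>4$, so only $j\le4$ appears. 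Taylor expansion of $P$ then gives
\[
P(\Lambda)=P(\lambda(x))+\beta_2\sum_i \partial_iP(\lambda(x))\,D_i(y),
\]
where $D_i(y)$ denotes the $\beta_2$-coefficient of $\Lambda^i$ displayed above.

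\textbf{Step 2: split by characters.} Identify $R(\mu_2)\otimes\mathbb Q\cong\mathbb Q^2$ via $\eps\mapsto\pm1$. The value $\lambda(x)$ becomes
\[
p_+=(4,6,4,1),\qquad p_-=(0,-2,0,1),
\]
the latter coming from $(1-t^2)^2$. The two conditions now read as follows.
\begin{enumerate}
\item Regular fibre: $P(p_-)=0$, and the rank is $P(p_+)=2c$.
\item Zero reduced class: the coefficients of $u$ and $v$ vanish at both characters. This gives explicit linear conditions $\nabla P(p_\pm)\cdot w^{(k)}_\pm=0$ for a few fixed integer vectors $w^{(k)}_\pm$, read off from Step 1 (using $\eps^j=\pm1$ by parity of $j$).
\end{enumerate}
Because $P$ has integer coefficients but $x$ involves $\eps$, we cannot shift by non-integer constants. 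The whole content is therefore an integrality statement: an integer polynomial vanishing at $p_-$, with prescribed directional derivatives at $p_\pm$, has $P(p_+)\equiv0\pmod{32}$.

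\textbf{Step 3: the $2$-adic estimate (main obstacle).} We note that $p_+-p_-=(4,8,4,0)$ is divisible by $4$. We would expand $P$ around $p_-$ in the variables $z_i=y_i-p_{-,i}$, which keeps integer coefficients. Then
\[
P(p_+)=\sum_{\alpha}c_\alpha\, 4^{\alpha_1}8^{\alpha_2}4^{\alpha_3}.
\]
The constant term vanishes. The linear term is controlled by the derivative conditions at $p_-$: these should force the relevant combination of $c_{e_1},c_{e_2},c_{e_3}$ to be divisible by a power of $2$ giving total divisibility $32$. The quadratic terms carry at least $16$, and we need one more factor of $2$; we would obtain it from the derivative conditions at $p_+$, after expanding $\nabla P(p_+)$ around $p_-$ in the same way and reducing mod $2$. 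Cubic and higher terms carry at least $64$.

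The delicate point is bookkeeping the exact $2$-adic content of the linear and quadratic coefficients. We expect to need both sets of derivative conditions simultaneously and to reduce mod $32$ a small linear system over $\Z/32$. We would first try writing everything in the basis $z_1,z_2,z_3$, discarding $z_4$, since $p_{+,4}=p_{-,4}$ and so $z_4$ contributes nothing to $P(p_+)$. Positivity then gives rank $\ge32$.

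\textbf{Step 4: sharpness.} Finally, we verify directly that $6E^{\otimes2}-E^{\otimes3}$ works. With $(x+\beta_2y)^n=x^n+n\beta_2x^{n-1}y$ and $x^2=8(1+\eps)$, $x^3=32(1+\eps)$, the $\beta_2$-part is
\[
(12x-3x^2)y=\bigl(24(1+\eps)-24(1+\eps)\bigr)y=0 .
\]
The fibre is $48(1+\eps)-32(1+\eps)=16(1+\eps)$, which is regular of rank $32$.
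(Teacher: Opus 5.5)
Your setup matches the paper's route: evaluate at the two characters of $\mu_2$, expand the integer polynomial around the point $p_-$ where the fibre condition lives, and count powers of $2$ along $p_+-p_-$. Keeping $\Lambda^3,\Lambda^4$ as separate variables is harmless. Your Step~1 formulas give $\Lambda^3=\Lambda^1$ and $\Lambda^4=1$ on these classes, which the paper obtains from triviality of $\det E$ and $[E^*]=[E]$. Step~4 is correct.

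\textbf{The gap is Step~3.} It carries the whole content, yet it is left as an expectation (``should force'', ``a small linear system over $\Z/32$''), and you never compute the constraint vectors $w_\pm$. From Step~1, $D_1=D_3=u+v\eps$, $D_2=2(u+v)\eps$ and $D_4=0$. With $d=u+v$ and $h=u-v$, the reduced coefficients at $\eps=\pm1$ are
\[
 d\,(\partial_1+2\partial_2+\partial_3)P(p_+),\qquad h\,(\partial_1+\partial_3)P(p_-)-2d\,\partial_2P(p_-).
\]
Take $(u,v)=(1,0)$ and $(0,1)$. This requires that every pair is realized by an actual rank-four bundle, which holds because $\pi_3U(2)\to\pi_3U$ is an isomorphism; you should say so. You obtain
\[
 P(p_-)=0,\qquad \partial_2P(p_-)=(\partial_1+\partial_3)P(p_-)=0,\qquad (\partial_1+2\partial_2+\partial_3)P(p_+)=0.
\]
In particular, your linear term $4c_{e_1}+8c_{e_2}+4c_{e_3}$ vanishes exactly, not merely to some $2$-adic order.

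\textbf{The decisive observation} is that $w=(1,2,1,0)$ satisfies $p_+=p_-+4w$. So the only condition at $p_+$ is a derivative along the segment itself. Then $f(\tau)=P(p_-+\tau w)\in\Z[\tau]$ has $f(0)=f'(0)=f'(4)=0$, and $f(4)$ is the rank. Writing $f=\sum_{m\geq2}b_m\tau^m$ gives
\[
 2f(4)=\sum_{m\geq3}(2-m)b_m4^m,
\]
which is divisible by $64$, so $32\mid f(4)$.

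In your coordinates this is exactly the mod-$2$ reduction you anticipated. Here $b_m=\sum_{|\alpha|=m}c_\alpha w^\alpha$, and the condition at $p_+$ reads $\sum_m m4^{m-1}b_m=0$. Hence $8b_2\equiv0\pmod{16}$, and therefore $P(p_+)\equiv16b_2\equiv0\pmod{32}$.

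This is the paper's argument: its $Q(\tau)=P(4\tau,8\tau-2)$, in the variables $[E]$ and $\lambda^2[E]$, is your $f$ up to rescaling $\tau$.
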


For a finite connected complex of groups, we define the \emph{stable regular index} $I_K(X)$ using the compatible family $\omega_x=(L_o/|G_x|)\C[G_x]$. It is the least positive $a$ for which $a\omega$ is the fibre family of a virtual bundle. We prove
\[
 g_s(L_e)d_{2s+1}=0\quad\text{on }E_{2s+1}^{0,0}
\]
in the cellular Atiyah--Hirzebruch spectral sequence. The index is the product of the successive orders of these differentials on the multiples of $\omega$ that survive. The Stiefel connectivity estimate in Corollary~\ref{cor:positive} gives a sufficient condition for a virtual class to be represented by an actual bundle.

For a banded $\mu_e$-gerbe over a finite connected CW complex $M$, let $\xi\in H^2(M;\Z/e)$ be its class and $\alpha=\beta_e(\xi)$. The weight decomposition gives
\[
 I_K(\G)=\lcm_{0\leq j<e}\ind(j\alpha).
\]
Here $\ind(j\alpha)$ is the gcd of the ranks of finite-rank $j\alpha$-twisted bundles. Antieau--Williams' period--index results \cite{AW2014,AW6,AW2021} give smaller ranks on cyclic gerbes and show that minimum regular ranks can grow without bound with a fixed cyclic stabilizer group as dimension increases.

\subsection{Sharp bounds with varying isotropy}
Every complex of groups with stabilizers $1$ or $\mu_2$ admits a representable map to a banded $\mu_2$-gerbe over the same coarse complex. The construction retains the twisting elements and uses the unique injections of the cell groups into $\mu_2$. Gerbe rank bounds therefore pull back to these complexes and, through Pardon's approximation, to all paracompact orbispaces with stabilizers of order at most two.

\begin{maintheorem}\label{thm:order_two}
Every paracompact orbispace of dimension at most $n$ with stabilizers of order at most two admits a regular bundle of rank
\begin{equation}\label{eq:order_two_rank}
 R_2(n)=2^{\,t+1+v_2(t!)},\qquad t=t(n).
\end{equation}
The bound $R_2(5)=16$ is sharp. It is attained by a finite connected five-dimensional complex of groups with dense free locus and with minimum regular rank four on its entire nonfree locus.
\end{maintheorem}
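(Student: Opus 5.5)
The theorem has two parts: the existence bound $R_2(n)$, and sharpness at $n=5$. For existence, I will first reduce to finite-dimensional complexes of groups with stabilizers $1$ or $\mu_2$. Pardon's representable approximation theorem \cite[Proposition 4.9]{Pardon} does this, just as in the proof of Theorem~\ref{thm:main}. By the construction described before the statement, such a complex admits a representable map $f\colon X\to\G$ to a banded $\mu_2$-gerbe $\G$ over the same coarse complex; it keeps the twisting elements and includes each cell group into $\mu_2$ in the unique way. The pullback of a regular bundle on $\G$ is regular on $X$. Indeed, $\C[\mu_2]$ restricts to $2\cdot\C[1]$ on free points, which is regular of the same rank, and to $\C[\mu_2]$ itself on points with stabilizer $\mu_2$. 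So it suffices to bound ranks on $\mu_2$-gerbes over complexes of dimension $\le n$.

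On such a gerbe we have $L_o=L_e=2$, so $v_2(L_e)=1$ and $\delta=1$. The general formula \eqref{eq:rank} then gives
\[
 \Rsharp(n;2,2)=2\cdot 2^t\,(t!)_2\,2^{\lfloor t/2\rfloor}.
\]
This exceeds $R_2(n)$ by the factor $2^{\lfloor t/2\rfloor}$, so I have to remove it. I plan to use the weight decomposition. A regular bundle on $\G$ is $E_0\oplus E_1$, where $E_0$ is an ordinary bundle of rank $r/2$ and $E_1$ is an $\alpha$-twisted bundle of rank $r/2$, with $\alpha=\beta_2(\xi)$ and $2\alpha=0$. The gcd identity $I_K(\G)=\lcm(\ind 0,\ind\alpha)=\ind(\alpha)$ suggests the following route. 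Bound the index of a $2$-torsion class on an $n$-complex by a Gabber-type or Antieau--Williams cellular estimate of size $2^{t+v_2(t!)}$. The successive orders of the differentials $d_{2s+1}$ on $\omega$ are controlled by $g_s(2)$. For the middle factor, $v_2(g_s(2))=v_2(2^{?})$; concretely $g_s(2)=\gcd\{k^s-1: k \text{ odd}\}$ has $2$-adic valuation $1$ for odd $s$ and $2+v_2(s)$ for even $s$. For period-$2$ classes I expect better, $v_2=1+v_2(s)$, because only the order of $d_{2s+1}$ on the specific class $\alpha$ matters and it is killed by $2$ up to the factor $s$ from Adams operations. The product over $s\le t$ would then give $2^{t+v_2(t!)}$. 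Doubling for the regular representation gives $R_2(n)$. Finally I pass from the virtual class to an actual bundle using the Stiefel connectivity criterion, Corollary~\ref{cor:positive}. Since $2^{t+1+v_2(t!)}\ge n/2$ in the relevant range, enough room is available; small $n$ I will check directly.

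For sharpness at $n=5$ we have $t=2$ and $R_2(5)=2^{2+1+1}=16$. Since the rank must be a multiple of the index, I need a finite connected $5$-complex whose $\mu_2$-gerbe part forces $\ind(\alpha)=8$, that is, rank $\ge16$. Antieau--Williams \cite{AW2014,AW6} show that period-$2$ classes on $5$-/$6$-skeleta of $K(\Z/2,2)$ can have index $8$, arising from a $d_3$ followed by a $d_5$ obstruction. My plan is to take the $\mu_2$-gerbe over the $4$-skeleton-type part, where the minimum regular rank is $4$, realizing the nonfree locus. Then I attach free $5$-cells, which make the free locus dense, and choose the attaching maps so that the $d_5$ obstruction to extending a rank-$8$ regular class becomes nonzero. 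Concretely, the free cells detect a secondary obstruction, namely an Adams-type $e$-invariant of order $2$ in $\pi_4$ of the relevant Stiefel or unitary groups. This doubles the required rank from $8$ to $16$.

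I expect the main obstacle to be this lower bound. One must show that no regular bundle of rank $8$ exists, while every restriction to the nonfree locus already admits rank $4$. That requires computing $d_5$ on $4\omega$ in the cellular AHSS for the constructed complex of groups, including the interaction between twisted and untwisted weights. I would try to model it on the Antieau--Williams computation of $d_5$ via $\mathrm{Sq}^2$/Steenrod operations and the $2$-primary $e$-invariant.
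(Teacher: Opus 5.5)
\textbf{Upper bound.} This half follows the paper's route: Pardon's approximation, the enlargement of Proposition~\ref{prop:enlargement}, the Antieau--Williams bound \cite[Theorem A]{AW2021} with $m=2$ and $d=t+1$ (giving $\ind(\alpha)\mid2^{t+v_2(t!)}$), and realization in each weight by Proposition~\ref{prop:gerbeindex}. Cite that theorem directly. Your heuristic that $2^{1+v_2(s)}$ kills $d_{2s+1}$ is what the theorem says, not an argument for it.

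One step is missing. Pardon's approximation produces a locally finite complex that may be infinite, but the period--index bound and the rank-map description of $\ind$ apply to finite complexes. The paper handles this as follows. It classifies the enlarged gerbe by a map $|Z|\to K(\Z/2,2)$ into a finite-type model and compresses it into the finite skeleton $K_n$ by cellular approximation. It then builds the bundle on the universal gerbe over $K_n$ and pulls it back.

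\textbf{Sharpness.} Your outline of the example (keep $\mu_2$ on the four-skeleton, make the top cells free) is the paper's construction, but the lower bound has a genuine gap, and the mechanism you propose is not the right one. The index of the gerbe over the whole five-complex gives only an upper bound on $X$. Bundles pull back along $j\colon X\to\G$, but a regular bundle on $X$ need not come from $\G$, and making the top cells free only weakens the extension conditions. So the question is whether the gerbe's obstruction survives on $X$, not whether the free cells create a new one.

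There is also no order-two $e$-invariant in $\pi_4$ of Stiefel or unitary groups involved. Over a free five-cell, the extension obstruction for rank $r\geq2$ lies in $\pi_4(BU(r))\cong\Z$. The torsion appears only globally, in $H^5(M;\Z)$.

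The missing idea is naturality along $j$ combined with Lemma~\ref{lem:cochains}. In rank and odd-multiplicity coordinates, $j$ acts on the second page by $(u,v)\mapsto(u+v,i^*v)$. With $S=M^{(4)}$ we have $H^5(S;\Z)=0$. Hence the gerbe value $d_5^{\G}(2,2)=(0,\gamma)$, in weight coordinates, maps to $\gamma$ under the sum map. In other words, the total class over a free cell still sees the odd-weight obstruction.

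To force rank sixteen you need two further inputs.
\begin{enumerate}
\item $\gamma$ must have order four. This comes from the generator of $H^5(K(\Z/2,2);\Z)\cong\Z/4$ \cite{AW6}, which survives restriction to the five-skeleton.
\item You need $H^2(M;\Z)=0$, so that no $d_3$ out of $H^2(M;\Z)\oplus H^2(S;\Z)$ shrinks the target at bidegree $(5,-4)$. This vanishing makes the obstruction independent of the chosen rank-eight class on the four-skeleton. Choosing attaching maps cannot guarantee that.
\end{enumerate}
Density of the free locus also requires $M$ to be pure, as arranged in Lemma~\ref{lem:seed}. All of this is Theorem~\ref{thm:transfer}, which replaces your proposed recomputation of $d_5$ by a formal comparison with the Antieau--Williams computation.
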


To attain the bound, start with a suitable gerbe on a finite five-complex and make the stabilizers on the open five-simplices trivial. In rank and odd-multiplicity coordinates, the representation-ring cochains split as
\[
 C^*(M;\Z)\oplus C^*(S;\Z),
\]
where $M$ is the coarse complex and $S$ is its nonfree subcomplex. The comparison map preserves a value of $d_5$ of order four in the coarse cohomology summand. This class restricts to zero on $S$. Together with the first obstruction of order two, it forces regular rank sixteen.

For a closed oriented smooth five-orbifold with these stabilizers, each connected component $M$ of its coarse space satisfies $H^5(M;\Z)\cong\Z$. This reduces the bound to four.

\begin{maintheorem}\label{thm:oriented_five}
Every closed oriented smooth five-orbifold with stabilizers of order at most two admits a regular bundle of rank four. This bound is attained by a connected effective example with both stabilizer types and a dense free locus.
\end{maintheorem}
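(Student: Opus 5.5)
The plan is to run the cellular obstruction argument behind Theorem~\ref{thm:order_two}, specialized to $n=5$, and to use orientability to kill the top obstruction that forced rank $16$. By Theorem~\ref{thm:order_two} the problem reduces to the map to a banded $\mu_2$-gerbe $\G$ over the coarse space $M$, which is triangulable since $X$ is a smooth orbifold. The fibre family is $\omega$ with $L_o=2$. There are two obstructions.
\begin{enumerate}
\item The first is the $d_3$ obstruction on $E_3^{0,0}$. It has order dividing $g_1(2)=2$. Concretely it is the class $\alpha=\beta_2(\xi)\in H^3(M;\Z)$, or its twisted analogue on the nonfree stratum, and it is killed by doubling $\omega$.
\item The second is the $d_5$ obstruction. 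It lands in a quotient of $H^5(M;\Z)$, together with contributions from the nonfree subcomplex $S$ in the split cochain description $C^*(M;\Z)\oplus C^*(S;\Z)$.
\end{enumerate}
The key point is that for a closed oriented smooth five-orbifold each component has $H^5(M;\Z)\cong\Z$, which is torsion-free. The singular set $S$ is a union of suborbifolds of codimension at least $1$. For stabilizer $\mu_2$ acting effectively the codimension is at least $2$, but ineffective components may occur, so I also have to handle the case $S=M$ of a global gerbe. In that case $H^5(M)\cong\Z$ again.

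I would carry this out in the following order.
\begin{enumerate}
\item Show that $d_5$ applied to $2\omega$ is a torsion element of $E_5^{5,-4}$. It is torsion because $g_2(2)\,d_5=0$ by the AHSS statement.
\item Show that this group receives no torsion from $H^5(M;\Z)\cong\Z$, since it is a quotient of $\Z$ by the image of $d_3$ from $H^2$. Check that $d_3$ into $H^5$ vanishes or has torsion-free cokernel. Rationally, $d_3$ is zero, so its image in $\Z$ is zero. Hence $E_5^{5,-4}$ contains $\Z$ modulo torsion images, so its torsion is trivial.
\item The $S$-summand has dimension at most $4$ in odd-multiplicity coordinates. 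It therefore carries no degree-$5$ obstruction, except in the ineffective case, which falls under the previous item.
\end{enumerate}
It follows that $2\omega$ lifts to a virtual bundle of fibre $\C[\mu_2]^{\oplus 2}$ or $\C^4$, that is, rank $4$ with regular fibres.

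Next I would realize the virtual class by an actual bundle of rank $4$. Over a five-dimensional complex, Corollary~\ref{cor:positive} requires the Stiefel connectivity. With rank $4$ the relevant fibres $U(4)$ and $U(4)/U(2)$ are $5$-connected in the right range only in a stable sense. So I would check the cell-by-cell lifting directly. At the point, Stiefel manifolds $U(N)/U(4)$ are $8$-connected, so any stable class over a $5$-complex destabilizes to rank $4$. On the nonfree cells the equivariant version with $\C[\mu_2]^{2}$ reduces to $U(2)\times U(2)$ stabilization, which is $4$-connected for the relevant quotient. The top cells are free if effective, and I expect this to go through. The smooth Hermitian refinement follows by the approximation used for Theorem~\ref{thm:main}.

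For sharpness I need a connected effective example with dense free locus and minimum regular rank $4$. I would take $M=S^5$-type or $SU(3)/SO(3)$-type, with an involution whose fixed set $F$ is three-dimensional and carries a nonzero $\beta_2$-type obstruction. A candidate is the Wu manifold $W=SU(3)/SO(3)$ quotiented by a reflection-type involution, or a product $S^2\times \R P^3$ with an orientation-preserving involution. Rank $2=|\mu_2|$ is excluded if the restriction to the nonfree locus has nontrivial $d_3$, namely a nonzero integral Bockstein of a twisting class on $F$. The hardest step is constructing this example and proving that rank $2$ fails, that is, that the first obstruction survives globally, while also verifying the degree-$5$ torsion-freeness argument in step 2 above carefully. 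I would first try $\R P^5$ with the involution $[x_0:\dots:x_5]\mapsto[-x_0:-x_1:x_2:\dots:x_5]$, whose fixed locus contains an $\R P^3$. This is orientation preserving, and rank $2$ would give a line bundle class on the fixed $\R P^3$ obstructed by $W_3\neq0$-type data.
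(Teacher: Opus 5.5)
Your upper-bound argument is essentially the paper's. You present $X$ over a triangulation of $|X|$, enlarge to a banded $\mu_2$-gerbe, note that $2\omega$ survives $d_3$, and kill the torsion class $d_5(2\omega)$ because $H^5(M;\Z)\cong\Z$. The paper phrases the last step through the Antieau--Williams formula $\ind(\alpha)=2\ord(\widetilde Q(\alpha))$, whose target $H^5(M;\Z)/(\alpha\smile H^2(M;\Z))$ makes the incoming image visibly torsion.

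Two small points on this half:
\begin{enumerate}
\item Your assertion that the incoming $d_3$ is rationally zero needs that input, or a Chern-character argument. Likewise, $H^5(M;\Z)\cong\Z$ for the \emph{coarse} space needs the pseudomanifold argument of Lemma~\ref{lem:coarsetop}.
\item Your hesitation about realization is unnecessary. The class $2\omega$ has multiplicity $2$ in each character at a $\mu_2$-cell and rank $4$ at a free cell. Since $2\cdot2\geq5-1$, Corollary~\ref{cor:positive} applies directly.
\end{enumerate}

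The genuine gap is sharpness, and the route you propose cannot succeed. Every candidate you list ($S^5$, $SU(3)/SO(3)$, $S^2\times\R P^3$, $\R P^5$, each divided by an involution) is a global quotient $[N/\mu_2]$ of a manifold. Such an orbifold always has the regular rank-two bundle $[(N\times\C[\mu_2])/\mu_2]$. Its fibre is $\C[\mu_2]$ at a fixed point and $\C^2$ at a free point, so every one of your examples has $\smin=2$.

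In particular, there is no $W_3$-type obstruction on the fixed $\R P^3$. The fixed suborbifold is the trivial gerbe $F\times B\mu_2$, which carries the odd line $F\times\eps$. In any case $H^3(\R P^3;\Z)\cong\Z$ is torsion-free.

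The missing idea is to make the nonfree suborbifold a $\mu_2$-gerbe with no odd line bundle; no quotient by an involution provides this. The paper takes $\mathcal O=[(S^3\times S^2\times S^1)/O(2)]$, with $z\cdot(v,w,h,\theta)=(z^2v,zw,h,\theta)$ and $r\cdot(v,w,h,\theta)=(\bar v,\bar w,h,\theta+\pi)$.
\begin{enumerate}
\item Its nonfree locus, over the poles of $S^2$, is the mapping torus $T$ of complex conjugation $\widetilde c$ on the root gerbe $\G_2=[S^3/U(1)]$.
\item $\Pic(\G_2)\cong\Z$ is generated by an odd line $\eta$, and $\widetilde c^*\eta\cong\eta^{-1}$. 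Hence no odd line on $\G_2$ is $\widetilde c$-invariant, so $T$, and therefore $\mathcal O$, has no regular rank-two bundle.
\item The $O(2)$-representation $\C^2\oplus(\rho_\R\otimes_\R\C)$ gives rank four.
\item $\mathcal O$ is oriented because the reflection reverses the orientations of both $S^3\times S^2\times S^1$ and $\mathfrak o(2)$.
\end{enumerate}
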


The example is an explicit quotient of $S^3\times S^2\times S^1$ by $O(2)$. A four-dimensional representation gives a regular bundle. A mapping-torus suborbifold rules out rank two by the determinant-line obstruction. The mapping torus and its product with a circle attain the sharp rank-four bounds in dimensions three and four.

In dimension six, the bound $R_2(6)=16$ is attained in the closed oriented smooth category. Corollary~\ref{cor:sixsharp} derives this from Crowley--Grant's period-two, index-eight class on an oriented six-manifold \cite[Theorem 1.4(2)]{CG}. The resulting orbifold is a banded $\mu_2$-gerbe, with ineffective stabilizer at every point.

\subsection{Relation to earlier work}
Henriques--Metzler study presentations of noneffective orbifolds through frame constructions and gerbes \cite{HM}. Kali\v{s}nik constructs faithful representations of orbifold groupoids on continuous families of finite-dimensional Hilbert spaces \cite{Kalisnik}. L\"uck--Oliver construct regular equivariant bundles for proper actions of discrete groups \cite[Corollary 2.7]{LO}. Pardon proves existence of regular bundles for the orbispaces considered here \cite[Theorem 1.1]{Pardon}. His approximation theorem extends our cellular construction to paracompact orbispaces, and his $K$-theory supplies the cellular spectral sequence used for the stable regular index.

For the relation between gerbes, twisted bundles, and twisted $K$-theory, see Lupercio--Uribe \cite{LU}, Atiyah--Segal \cite{AS}, and Tu--Xu--Laurent-Gengoux \cite{TXLG}.

Sections~\ref{sec:cells}--\ref{sec:operations} give the cellular construction and the optimality results for universal operations. Sections~\ref{sec:ahss}--\ref{sec:examples} study the stable regular index, its relation to gerbes, and the examples attaining the geometric bounds.

\section{Cellular bundles and realization}\label{sec:cells}

Let $X=\|(Z,G)\|$ be the realization of a locally finite finite-dimensional simplicial complex of finite groups in the sense of \cite[Definition 2.1]{Pardon}. Write $X^j$ for its $j$-skeleton. A characteristic map has the form
\[
 (D^j,\partial D^j)\times BG_\sigma\longrightarrow(X^j,X^{j-1}).
\]
It is representable. For a face $\sigma\subset\tau$, the group map is an injection $G_\tau\hookrightarrow G_\sigma$, with the specified conjugacies from the complex of groups.

\begin{lemma}[Cellular clutching]\label{lem:clutch}
Bundles on $X^j$ are equivalent to the data of a bundle on $X^{j-1}$, a $G_\sigma$-bundle on each characteristic $j$-disk, and boundary isomorphisms to the pullback of the bundle on $X^{j-1}$. Compatible bundle maps glue. The equivalence commutes with direct sums, tensor products, and exterior powers.
\end{lemma}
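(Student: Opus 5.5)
The proof is a pushout argument in topological stacks, which we then linearize. The first step is to present $X^j$ as the homotopy pushout (in fact an honest pushout of stacks, by representability of the characteristic maps) of
\[
 X^{j-1}\longleftarrow \coprod_\sigma \partial D^j\times BG_\sigma\longrightarrow \coprod_\sigma D^j\times BG_\sigma ,
\]
where $\sigma$ runs over the $j$-simplices of $Z$. Each characteristic map restricts to a homeomorphism of the open cell $\mathring D^j\times BG_\sigma$ onto an open substack of $X^j$, and its complement is $X^{j-1}$. We check this pushout locally on the coarse space, using the local quotient presentations of $\|(Z,G)\|$ from \cite[Definition 2.1]{Pardon]}. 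Near a point of an open $j$-cell the chart is $\mathring D^j\times BG_\sigma$. Near a point of $X^{j-1}$ lying in a face $\tau$ of $\sigma$, the chart is a $G_\tau$-space obtained by gluing the $G_\tau$-induced collars of the cells. Here we use the injection $G_\sigma\hookrightarrow G_\tau$ and the specified conjugacies, so the gluing is the induction $G_\tau\times_{G_\sigma}(\cdot)$.

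Vector bundles form a stack for the topology. So a bundle on $X^j$ is the same as bundles on a cover together with descent isomorphisms. We take the cover consisting of the open cells, an open collar neighbourhood $U$ of $X^{j-1}$, and their overlaps. The collar $U$ deformation retracts onto $X^{j-1}$ through representable maps, since the radial retraction of $D^j\setminus\{0\}$ onto $\partial D^j$ is $G_\sigma$-equivariant for the trivial action on the disk coordinate. Homotopy invariance of bundles on paracompact orbispaces (via a partition of unity on the coarse space) then identifies bundles on $U$ with bundles on $X^{j-1}$. A $G_\sigma$-bundle on $D^j$, meaning a bundle on $D^j\times BG_\sigma$, replaces the data on the open cell. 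The overlap data becomes a boundary isomorphism over $\partial D^j\times BG_\sigma$, after shrinking the collar and using the same homotopy invariance. This gives essential surjectivity. Full faithfulness follows by applying the same descent to bundle maps: a map on $X^{j-1}$ and maps on the disks that agree on $\partial D^j$ glue uniquely, which is the statement ``compatible bundle maps glue''.

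Compatibility with $\oplus$, $\otimes$ and $\lambda^k$ is formal. These are fibrewise functors on bundles that commute with pullback and with descent isomorphisms, since each is a functor applied to transition data. So applying them to the triple (bundle on $X^{j-1}$, disk bundles, boundary isomorphisms) gives the triple of the result.

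The step I expect to need the most care is the local model near lower skeleta. There one must verify that the gluing via the injections $G_\tau\hookrightarrow G_\sigma$ and the conjugacies really produces a pushout of stacks, and not just of coarse spaces. I would handle this by working in the groupoid presentation of $\|(Z,G)\|$ and checking the pushout property on each orbit chart $[V_\tau/G_\tau]$.
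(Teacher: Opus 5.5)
Your route differs from the paper's. You cover $X^j$ by a collar $U$ of $X^{j-1}$ and the open cells, apply open descent, and then use homotopy invariance to trade the open pieces for the closed pieces $X^{j-1}$ and $D^j\times BG_\sigma$. The paper instead builds the bundle directly in each open-star chart and then uses descent over the star cover. The trade is where your argument breaks.

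Homotopy invariance makes restriction $\mathrm{Vect}(U)\to\mathrm{Vect}(X^{j-1})$ a bijection on isomorphism classes, but not an equivalence of groupoids. An automorphism of a bundle on $U$ that is the identity on $X^{j-1}$ need not be the identity. So this restriction cannot be substituted into the $2$-fibre product that descent gives, and your comparison matches only isomorphism classes.

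Your essential-surjectivity construction survives. You pull back along the retraction and use a radial isomorphism on the half-open annulus that restricts to the given boundary isomorphism on $\partial D^j$; this is the paper's collar step. Full faithfulness, however, does not follow ``by the same descent.'' Open descent needs a map on $U$, whereas the data supply maps only on the closed sets $X^{j-1}$ and $D^j\times BG_\sigma$. Extending the map to $U$ by the disk maps on the collar, and proving the result continuous, is exactly the assertion that compatible maps glue. That needs a quotient-topology argument in local charts.

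Similarly, representability of the characteristic maps does not make $X^j$ an honest pushout of stacks. In the sense you need, namely that mapping groupoids into $\coprod_r[\mathrm{pt}/GL_r(\C)]$ form a $2$-fibre product, that claim is equivalent to the lemma itself. Even the collar retraction $U\to X^{j-1}$ is a morphism of stacks only after gluing maps defined on closed pieces. So the step you defer to your last paragraph is the proof.

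The paper carries it out as follows:
\begin{enumerate}
\item Over $\operatorname{St}(\sigma)$ with $Q=G_\sigma$, each $j$-cell $\tau\supset\sigma$ appears as $Q\times_{G_\tau}(\tau\cap\operatorname{St}(\sigma))$.
\item Inducing the disk bundles to $Q$ and extending the boundary isomorphisms radially over a collar gives a $Q$-bundle on the chart.
\item Bundle maps glue because matrix entries that are continuous on each piece and agree under the attaching identifications are continuous in the adjunction topology of the chart.
\item Full faithfulness supplies the comparison isomorphisms on overlaps of star charts, and the coherence identity for the twisting elements gives the cocycle condition.
\item Open descent then produces the bundle on $X^j$.
\end{enumerate}
Once you have this local statement, your global collar cover and homotopy-invariance detour are no longer needed.
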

\begin{proof}
Use the open-star charts of the complex of groups, restricted to $X^j$. In the chart over $\operatorname{St}(\sigma)$, with group $Q=G_\sigma$, the portion of a characteristic simplex $\tau\supset\sigma$ pulls back to
\[
 Q\times_{G_\tau}\bigl(\tau\cap\operatorname{St}(\sigma)\bigr).
\]
The subgroup injection and the face identifications are those of the complex of groups, including its twisting elements; see the description following \cite[Definition 2.1]{Pardon}.

Induce each prescribed disk bundle to $Q$ and identify its boundary with the pullback of the bundle on $X^{j-1}$. A collar of the disk boundary and radial retraction extend the boundary isomorphism to bundle charts near the attaching locus. Restrict these charts to the portions under consideration. Together with the disk-interior charts, they give a $Q$-equivariant bundle on the star. Local finiteness reduces this construction to finitely many incident cells near each point.

Restriction to $X^{j-1}$ and the characteristic pieces is fully faithful. Indeed, compatible bundle maps determine a unique fibrewise linear map on the attached bundle. In local bundle trivializations its matrix entries are continuous on each characteristic piece and agree under the attaching identifications. The cellular adjunction topology makes them continuous on the chart. The same argument applies after restriction to an open subset. Equivariance and equality of maps can likewise be checked on these pieces.

On an overlap of two star charts, pass to common chart refinements over $X^j$. After applying the subgroup restrictions and conjugacies of the chart maps, the two bundles induce the same data on $X^{j-1}$ and on each characteristic piece. Full faithfulness gives the unique comparison isomorphism inducing these identifications. Such comparisons agree on further refinements and hence define an isomorphism on the overlap. On a triple overlap, the composite and the direct comparison agree on every piece by the coherence identity of the complex of groups; full faithfulness makes them equal. Open descent therefore gives a bundle on $X^j$.

Restriction recovers the prescribed data. Conversely, the piecewise identity for an existing bundle glues to an isomorphism with its reconstruction. These maps show that restriction and clutching are quasi-inverse. The same construction glues bundle maps and commutes with the stated fibrewise operations.
\end{proof}

This is the topological cellular form of \cite[Theorem 2.1]{BL}, using the local charts of the complex of groups. Finite-group averaging and partitions of unity give invariant Hermitian metrics; see also \cite[Lemma 5.1]{Pardon}.

\begin{lemma}[Relative embeddings]\label{lem:embedding}
Let $Y\subset X$ be a subcomplex and let $D,A$ be complex bundles on $X$, with a specified monomorphism $D|_Y\to A|_Y$. On a characteristic $j$-cell outside $Y$, let $d_\rho,a_\rho$ denote the multiplicities of an irreducible representation $\rho$ in $D,A$. If
\begin{equation}\label{eq:gap}
 a_\rho\geq d_\rho,\qquad
 2(a_\rho-d_\rho)\geq j-1\quad\text{whenever }d_\rho>0,
\end{equation}
the monomorphism extends over $X$.
\end{lemma}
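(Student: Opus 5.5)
We extend the monomorphism cell by cell, by induction on the skeleta of $X$ relative to $Y$. Write $X_j=Y\cup X^j$. Suppose a monomorphism $D|_{X_{j-1}}\to A|_{X_{j-1}}$ has been constructed. By Lemma~\ref{lem:clutch}, bundle maps on $X_j$ are the same as compatible data: a map on $X_{j-1}$ together with a $G_\sigma$-equivariant map on each characteristic disk $D^j$ of a cell outside $Y$, agreeing with the pullback on $\partial D^j$. Injectivity is a fibrewise condition, so it can be checked on each piece. The problem therefore reduces to extending a $G_\sigma$-equivariant fibrewise monomorphism from $\partial D^j$ to $D^j$. Here the group acts trivially on the disk and the pulled-back bundles $D,A$ are trivial equivariant bundles with fibres the representations $V,W$ of $G_\sigma$. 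Cells can be handled independently, and local finiteness means there are no convergence issues. The skeleta exhaust $X$ because $X$ is finite dimensional, so the induction terminates.

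For a single disk, an equivariant map $\partial D^j\to\operatorname{Mono}_{G_\sigma}(V,W)$ must extend over $D^j$. By Schur's lemma, $\operatorname{Hom}_{G_\sigma}(V,W)=\prod_\rho\operatorname{Hom}(\C^{d_\rho},\C^{a_\rho})$, and an equivariant map is injective exactly when each component is injective. Hence
\[
 \operatorname{Mono}_{G_\sigma}(V,W)\cong\prod_{\rho:\,d_\rho>0}V_{d_\rho}(\C^{a_\rho}),
\]
a product of complex Stiefel manifolds of $d_\rho$-frames in $\C^{a_\rho}$. This space is nonempty because $a_\rho\ge d_\rho$. The factors with $d_\rho=0$ are points and impose no condition.

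The extension exists if $\pi_{j-1}$ of this product vanishes. More precisely, the space must be $(j-1)$-connected, so that every map $S^{j-1}\to\operatorname{Mono}$ is null-homotopic; the case $j=0$ only requires nonemptiness. The Stiefel manifold $V_d(\C^a)$ is $2(a-d)$-connected, which follows from the fibrations $V_{d}(\C^a)\to V_{d-1}(\C^a)$ with fibre $S^{2(a-d)+1}$ by induction. Condition \eqref{eq:gap} gives $2(a_\rho-d_\rho)\ge j-1$, so every factor is $(j-1)$-connected and the extension exists.

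The main point needing care is the first step rather than the homotopy theory. The extension on each disk must be chosen so that it really defines a map on $X_j$. Lemma~\ref{lem:clutch} guarantees this, provided the boundary identification includes the twisting elements and the subgroup injections $G_\tau\hookrightarrow G_\sigma$. Working entirely with the characteristic disk of the top cell and its group $G_\sigma$ avoids the need for any compatibility check among representations of different cell groups at this stage.
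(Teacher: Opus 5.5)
Your proposal is correct and follows the paper's proof. Both arguments use the isotypic decomposition on the constant-group disk and identify the equivariant injections with a product of complex Stiefel manifolds $V_{d_\rho}(\C^{a_\rho})$. Both then derive $2(a_\rho-d_\rho)$-connectivity from the sphere fibrations and glue the extensions skeleton by skeleton with Lemma~\ref{lem:clutch}. One small point: where you write $\cong$, the paper says ``homotopy equivalent'' (via Gram--Schmidt). Your $\cong$ is harmless if $V_d(\C^a)$ denotes the space of linearly independent $d$-frames.
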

\begin{proof}
A constant-group disk admits isotypic decomposition, and its multiplicity bundles are trivial. The space of injective equivariant maps is homotopy equivalent to the product of complex Stiefel manifolds $V_{d_\rho}(\C^{a_\rho})$ over the nonzero $d_\rho$. The sphere fibrations that forget the last frame vector show that this Stiefel manifold is $2(a_\rho-d_\rho)$-connected. Thus the boundary map on $S^{j-1}$ extends under \eqref{eq:gap}. At a vertex only nonemptiness is required. Use the full space of injective maps to keep the prescribed boundary value fixed. Lemma~\ref{lem:clutch} glues these extensions, dimension by dimension. An invariant metric supplies an orthogonal quotient bundle.
\end{proof}

Let $K^0_{\mathrm{vb}}(X)$ denote the Grothendieck group of bounded-rank complex bundles. The construction below takes place in this group, using finite expressions in bundles. Section~\ref{sec:ahss} introduces the associated cohomology theory.

\begin{corollary}[Positive virtual classes]\label{cor:positive}
If $x=[A]-[D]\in K^0_{\mathrm{vb}}(X)$ has nonnegative multiplicities $u_\rho$ and $2u_\rho\geq n-1$ on every cell of an $n$-dimensional complex, then $x$ is represented by an actual bundle.
\end{corollary}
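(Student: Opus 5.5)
The plan is to reduce the corollary to Lemma~\ref{lem:embedding}, applied with $Y=\emptyset$. Write $x=[A]-[D]$ with $A,D$ actual bounded-rank bundles on $X$. First we stabilize so that $D$ embeds in $A$. Choose a complement for $D$: by Lemma~\ref{lem:embedding} (or Part~I/Pardon), $D$ embeds into a bundle $F$ whose multiplicities are large on every cell. For example, take $F=N\cdot W$, where $W$ is a regular bundle (Theorem~\ref{thm:main}) and $N\gg0$. Bounded rank and finite-dimensionality make the gap condition \eqref{eq:gap} uniform, so such an $N$ exists. An invariant metric gives $F\cong D\oplus D'$. Then
\[
 x=[A\oplus D']-[D\oplus D']=[A\oplus D']-[F].
\]
Hence we may assume from the outset that the subtracted bundle $D$ is a direct summand of a bundle with large multiplicities. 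In any case we may replace $(A,D)$ by $(A\oplus T,D\oplus T)$ freely, because the multiplicities $u_\rho=a_\rho-d_\rho$ of $x$ on each cell are unchanged by this.

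Next we apply Lemma~\ref{lem:embedding} to the pair $(D,A)$ with $Y=\emptyset$. On each characteristic $j$-cell with $j\le n$, the hypothesis gives $a_\rho-d_\rho=u_\rho\ge0$ and $2(a_\rho-d_\rho)=2u_\rho\ge n-1\ge j-1$. This is exactly \eqref{eq:gap}, since the multiplicities of $x$ on a cell are the differences of those of $A$ and $D$ as representations of $G_\sigma$. The lemma therefore produces a monomorphism $D\hookrightarrow A$ on all of $X$. Using an invariant Hermitian metric (finite-group averaging and partitions of unity, as after Lemma~\ref{lem:clutch}), we take the orthogonal complement $Q=D^\perp\subset A$. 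This $Q$ is an actual bundle with $A\cong D\oplus Q$, so $x=[Q]$ in $K^0_{\mathrm{vb}}(X)$. Its fibre multiplicities are $u_\rho$, so its rank is bounded.

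The main subtlety is that the hypothesis is stated for the class $x$, but the lemma needs the inequality for a chosen representative $(A,D)$ cell by cell. Because cellwise multiplicities are additive and well-defined on $K^0_{\mathrm{vb}}$ (they are fibre invariants at the cell's generic points), any representative has $a_\rho-d_\rho=u_\rho$. So the first stabilization step is not even needed.

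The remaining check is the vertex case $j=0$. There \eqref{eq:gap} only requires $a_\rho\ge d_\rho$, and the lemma's proof handles this via nonemptiness of the Stiefel manifold. It follows from $u_\rho\ge0$.
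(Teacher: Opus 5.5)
Your proposal is correct and matches the paper's proof: apply Lemma~\ref{lem:embedding} with $Y=\varnothing$. Since $a_\rho-d_\rho=u_\rho$ for any representative, \eqref{eq:gap} holds on every $j$-cell with $j\leq n$, and you take the orthogonal complement of $D$ in $A$; the preliminary stabilization step through Theorem~\ref{thm:main} is unnecessary, as you note yourself, and can be dropped.
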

\begin{proof}
Apply Lemma~\ref{lem:embedding} with $Y=\varnothing$ and $a_\rho-d_\rho=u_\rho$, and take $A/D$.
\end{proof}

\begin{lemma}[Sphere cancellation]\label{lem:cancellation}
A complex bundle of rank $u$ on $S^q$ with trivial reduced $K$-class is trivial if $2u\geq q$. The same statement holds on $S^q\times BG$ if the inequality holds for every nonzero isotypic multiplicity.
\end{lemma}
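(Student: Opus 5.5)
The approach is to reduce the triviality statement to an extension problem for frames, in the same way Lemma~\ref{lem:embedding} was proved. Let $E$ be a rank-$u$ bundle on $S^q$ with $[E]-u=0$ in $\tK^0(S^q)$.

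\textbf{Step 1: stable triviality.} Since the reduced class vanishes, there is an $N$ and an isomorphism
\[
 \phi\colon E\oplus\C^N\cong\C^{u+N}.
\]

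\textbf{Step 2: frame extension.} Write $S^q=D_+\cup D_-$, and trivialize $E$ over $D_+$. Then $E$ is classified by a clutching map $f\colon S^{q-1}\to U(u)$. The stable triviality says that the composite
\[
 S^{q-1}\xrightarrow{f}U(u)\longrightarrow U(u+N)
\]
is null-homotopic.

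\textbf{Step 3: destabilization.} The fibration $U(u)\to U(u+N)\to V_N(\C^{u+N})$ has base the Stiefel manifold $V_N(\C^{u+N})$, which is $2u$-connected, as already noted in the proof of Lemma~\ref{lem:embedding}. The long exact sequence then gives that
\[
 \pi_{q-1}U(u)\to\pi_{q-1}U(u+N)
\]
is injective as long as $\pi_q V_N(\C^{u+N})=0$, that is, as long as $q\leq 2u$. So $f$ is already null in $U(u)$, and $E$ is trivial.

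Alternatively, one can phrase Step 3 via Lemma~\ref{lem:embedding}. One extends a monomorphism from $\C^N$ into $E\oplus\C^N$ whose complement is trivial. A cleaner route is to extend a trivialization of $E$ from $D_+$ over the $q$-cell. The obstruction to doing so lies in $\pi_{q-1}U(u)$, and this group injects stably exactly when $2u\geq q$. The edge case $q=2u$ needs only the connectivity estimate $\pi_{2u}V_N(\C^{u+N})=0$.

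\textbf{Step 4: equivariant case.} On $S^q\times BG$, a bundle is a $G$-bundle on $S^q$ with trivial $G$-action on the base. Such a bundle decomposes isotypically as
\[
 E=\bigoplus_\rho \rho\otimes E_\rho ,
\]
where each multiplicity bundle $E_\rho$ is an ordinary bundle of rank $u_\rho$ on $S^q$. Here $K^0(S^q\times BG)$ in the paper's sense is $K^0(S^q)\otimes R(G)$, so the reduced class vanishes if and only if each $[E_\rho]-u_\rho$ vanishes in $\tK^0(S^q)$. Applying the nonequivariant case to each nonzero $E_\rho$, under $2u_\rho\geq q$, shows each $E_\rho$ is trivial, and hence $E$ is trivial.

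\textbf{Main obstacle.} The main obstacle is the bookkeeping at $q=2u$. One must confirm the precise connectivity range of $U(u)\to U(u+N)$; it is $2u$-connected, because $V_N(\C^{u+N})$ is $2u$-connected. That range is exactly what makes $\pi_{q-1}$ injective for $q\leq 2u$.
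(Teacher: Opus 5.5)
Your proposal is correct and follows the paper's argument. You use injectivity of the stabilization $\pi_{q-1}U(u)\to\pi_{q-1}U(u+N)$ (equivalently $\pi_q BU(u)\to\pi_q BU$) for $q\leq 2u$, which you obtain in one step from the $2u$-connected Stiefel manifold instead of iterating $U(m)\to U(m+1)\to S^{2m+1}$, and then apply it to each isotypic multiplicity bundle. One small correction: in your aside only the ``if'' direction of ``injects stably exactly when $2u\geq q$'' holds (for $u=1$ the group $\pi_{q-1}U(1)$ vanishes for every $q\geq 3$), but nothing in your argument uses the converse.
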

\begin{proof}
The sphere fibrations $U(u)\to U(u+1)\to S^{2u+1}$ show that $\pi_q(BU(u))\to\pi_q(BU)$ is injective for $q\leq2u$; see \cite{Hatcher}. A stable class therefore determines the bundle in this range. Apply this to each multiplicity bundle. A zero multiplicity contributes the zero bundle.
\end{proof}

\begin{lemma}[The first two skeleta]\label{lem:initial}
If every cell group has order dividing $L_o$, there is a regular bundle of rank $L_o$ on $X^2$.
\end{lemma}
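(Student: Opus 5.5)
We build the bundle cell by cell, using the cellular clutching of Lemma~\ref{lem:clutch}. On a characteristic $j$-cell $D^j\times BG_\sigma$ we always take the constant bundle $(L_o/|G_\sigma|)\C[G_\sigma]$. The only work is to produce boundary isomorphisms with the bundle already built on $X^{j-1}$.

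\textbf{$0$-skeleton.} At each vertex $v$ take the fibre $(L_o/|G_v|)\C[G_v]$, which is regular of rank $L_o$.

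\textbf{$1$-cells.} Let $\tau$ be an edge with group $G_\tau$ and endpoints $v,w$, so $G_\tau\hookrightarrow G_v$ and $G_\tau\hookrightarrow G_w$. The restriction of $\C[G_v]$ to $G_\tau$ is $[G_v:G_\tau]\,\C[G_\tau]$. Hence both endpoint fibres restrict to $(L_o/|G_\tau|)\C[G_\tau]$, since
\[
 (L_o/|G_v|)\,[G_v:G_\tau]=L_o/|G_\tau|.
\]
Twisting elements and conjugacies only change these identifications by conjugation, and conjugation preserves the isomorphism class of a regular representation. So the boundary data on $\partial D^1\times BG_\tau$ are two points, each carrying a bundle isomorphic to the constant one. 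Choose any $G_\tau$-isomorphisms there and clutch. This gives a regular bundle $E^1$ of rank $L_o$ on $X^1$.

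\textbf{$2$-cells.} Let $\sigma$ be a $2$-cell with group $G=G_\sigma$. The attaching map pulls $E^1$ back to a $G$-bundle on $S^1\times BG$, that is, a $G$-equivariant bundle on $S^1$ with trivial action on the base. Its fibre is regular, because regularity restricts along the injections $G\hookrightarrow G_\tau$. By isotypic decomposition the bundle is
\[
 \bigoplus_\rho \rho\otimes W_\rho,
\]
where each $W_\rho$ is an ordinary complex bundle on $S^1$ of rank $m_\rho=(L_o/|G|)\dim\rho$. Every complex bundle on $S^1$ is trivial, since $U(m)$ is connected. So the pullback is isomorphic to the constant bundle $(L_o/|G|)\C[G]$ on $S^1$. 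This is exactly Lemma~\ref{lem:embedding} in the degenerate case $j=2$: $\pi_1$ of the relevant unitary groups vanishes, and $j-1=1$ requires only $\pi_0$ and $\pi_1$ information on $BU(m_\rho)$. Such an isomorphism extends over $D^2$, and clutching gives the bundle on $X^2$.

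The bundle is regular of rank $L_o$. Local finiteness lets us make all these choices independently, cell by cell.

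\textbf{Main obstacle.} The point needing care is the $1$-cell step. There we must check that the twisting elements and the conjugacy data of the complex of groups do not obstruct the identification. The reason they do not is that any automorphism of $G_\tau$ preserves the isomorphism class of a regular representation. The $2$-cell step is routine once we observe that bundles on $S^1$ split isotypically into trivial bundles.
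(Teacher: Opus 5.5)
Your proof is correct and follows the paper's argument essentially step by step: vertex fibres $(L_o/|H|)\C[H]$, the restriction formula $\res^H_J\C[H]\cong\C[J]^{\oplus[H:J]}$ to match edge endpoints, and triviality of the isotypic multiplicity bundles on $S^1$ to extend over two-cells, all glued by Lemma~\ref{lem:clutch}. One small slip occurs in an aside your argument does not rely on: $\pi_1U(m)\cong\Z$ is not zero, and what you actually need (and correctly state just before) is $\pi_0U(m)=0$, equivalently $\pi_1BU(m)=0$.
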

\begin{proof}
At a vertex with group $H$, take $(L_o/|H|)\C[H]$. If $J\hookrightarrow H$, then
\begin{equation}\label{eq:restriction}
 \res^H_J\C[H]\cong\C[J]^{\oplus[H:J]}.
\end{equation}
The two endpoint representations of a characteristic edge are consequently isomorphic to $(L_o/|J|)\C[J]$, so they admit clutching to a constant disk bundle. Over the boundary of a two-cell, the isotypic multiplicity bundles are complex bundles on $S^1$ and are trivial. They extend across the disk. Lemma~\ref{lem:clutch} glues these extensions.
\end{proof}

\section{The least integral multiplier}\label{sec:arithmetic}

For a finite group $J$, equivariant isotypic decomposition gives
\[
 K^0(S^{2s}\times BJ)=K^0(S^{2s})\otimes R(J).
\]
Write $K^0(S^{2s})=\Z\oplus\Z\beta_s$, with $\beta_s^2=0$. Adams operations satisfy
\begin{equation}\label{eq:adams}
 \psi^k(\beta_s)=k^s\beta_s,
 \qquad \chi_{\psi^k(a)}(g)=\chi_a(g^k).
\end{equation}
The first formula follows from Bott periodicity, or the Chern character and torsion-freeness of $K^0(S^{2s})$; the second is the character formula for Adams operations. These standard operations are integral polynomials in exterior powers \cite{Hatcher}. In particular, $\psi^k=\id$ on $R(J)$ when $k\equiv1\pmod L$ and $\exp(J)\mid L$.

\begin{proposition}\label{prop:least}
For $L\geq2$ and $s\geq1$, the least positive integer $c$ for which there are finitely many integers $n_i$ and positive $k_i\equiv1\pmod L$ satisfying
\begin{equation}\label{eq:moments}
 \sum_i n_i=c,\qquad \sum_i n_i k_i^s=0
\end{equation}
is $g_s(L)$ from \eqref{eq:gdef}.
\end{proposition}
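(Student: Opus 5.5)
The plan is to show that the set of attainable values of $c$ is a subgroup of $\Z$ and that this subgroup equals $g_s(L)\Z$. Let
\[
 S=\Bigl\{\sum_i n_i\;:\;\sum_i n_ik_i^s=0,\ k_i\equiv1\pmod L,\ k_i\geq1\Bigr\}.
\]
Concatenating and negating relations shows that $S$ is an additive subgroup of $\Z$. So the least positive element of $S$ generates it, and it suffices to prove $S=g_s(L)\Z$. If $S=0$, no positive $c$ would exist; the construction below rules this out.

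\emph{Divisibility.} Suppose $\sum n_ik_i^s=0$. Then
\[
 c=\sum_i n_i=\sum_i n_i\bigl(1-k_i^s\bigr).
\]
Each term $k_i^s-1$ is divisible by $g_s(L)$ by definition \eqref{eq:gdef}. Hence $g_s(L)\mid c$, which gives $S\subseteq g_s(L)\Z$.

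\emph{Attainment.} By definition, $g_s(L)$ is the gcd of the numbers $k^s-1$ with $k\equiv1\pmod L$. Since $\Z$ is Noetherian, finitely many of these suffice, say
\[
 g_s(L)=\gcd_j\bigl(k_j^s-1\bigr),
\]
and Bézout gives integers $m_j$ with $\sum_j m_j(k_j^s-1)=g_s(L)$. We then take the expression with coefficient $-m_j$ on each $k_j$ and coefficient $\sum_j m_j$ on $k=1$. Note that $k=1\equiv1\pmod L$ is allowed and has $1^s=1$. The moment is
\[
 \sum_j(-m_j)k_j^s+\sum_j m_j=-\sum_j m_j\bigl(k_j^s-1\bigr)=-g_s(L),
\]
which is not zero. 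To correct this, we flip signs: coefficient $m_j$ on $k_j$ and $-\sum_j m_j$ on $1$. This gives moment $\sum_j m_j(k_j^s-1)=g_s(L)$ and total $0$, which is again the wrong way round.

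The right approach is to use the identity $c=-\sum_i n_i(k_i^s-1)$, which holds whenever the moment vanishes. We want a relation with moment $0$ and total $g_s(L)$. Take coefficient $-m_j$ on each $k_j$, and choose the coefficient $n_0$ on $k=1$ so that the moment vanishes:
\[
 n_0=\sum_j m_jk_j^s.
\]
Then the total is
\[
 n_0-\sum_j m_j=\sum_j m_j\bigl(k_j^s-1\bigr)=g_s(L).
\]
So $g_s(L)\in S$, and $S=g_s(L)\Z$. In particular, the least positive $c$ is $g_s(L)$.

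The only genuine subtlety is that $g_s(L)$ is finite and positive, so that a finite gcd exists. This holds because $k=1+L$ gives $k^s-1\neq0$ for $s\geq1$, so the gcd is taken over a set containing a nonzero element. The finiteness of the Bézout combination then follows, since descending chains of divisors stabilize. The paper's separate computation of $g_s(L)$ and the claim that at most three Adams indices suffice are not needed for this proposition.
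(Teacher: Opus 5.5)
Your proof is correct and is essentially the paper's: divisibility comes from $c=-\sum_i n_i(k_i^s-1)$, and attainment comes from a B\'ezout combination in which the coefficient on $k=1$ is chosen as $\sum_j m_jk_j^s=g_s(L)+\sum_j m_j$. That is exactly the paper's operator $\Theta_s$. You should delete the two abandoned sign choices in the middle of your attainment paragraph; the subgroup framing via $S$ is harmless but not needed.
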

\begin{proof}
The equations give $c=-\sum_i n_i(k_i^s-1)$, so $g_s(L)\mid c$. Conversely, a greatest common divisor is an integral linear combination of finitely many of the integers in its defining set. Write $g_s(L)=\sum_i b_i(k_i^s-1)$. Then
\[
 \Theta_s=\left(g_s(L)+\sum_i b_i\right)\id-\sum_i b_i\psi^{k_i}
\]
has coefficient sum $g_s(L)$ and weighted coefficient sum zero.
\end{proof}

\begin{proposition}\label{prop:valuations}
For every prime $p$,
\begin{equation}\label{eq:valuations}
 v_p(g_s(L))=
 \begin{cases}
 v_p(L)+v_p(s),&p\mid L,\ p\text{ odd},\\
 v_2(L)+v_2(s),&p=2,\ 4\mid L,\\
 1,&p=2,\ 2\parallel L,\ s\text{ odd},\\
 2+v_2(s),&p=2,\ 2\parallel L,\ s\text{ even},\\
 0,&p\nmid L.
 \end{cases}
\end{equation}
Consequently
\begin{equation}\label{eq:gclosed}
 g_s(L)=L\,s_L\,2^{\delta_L\mathbf1_{2\mid s}},\qquad
 \prod_{s=1}^t g_s(L)=L^t(t!)_L2^{\delta_L\floor{t/2}}.
\end{equation}
In particular $2\leq g_s(L)\leq2Ls$.
\end{proposition}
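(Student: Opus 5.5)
We compute $v_p(g_s(L))$ one prime at a time. The set $S=\{k^s-1: k\equiv1\pmod L\}$ consists of the integers $(1+Lm)^s-1$ with $m\geq0$. So $v_p(g_s(L))$ is the minimum of $v_p((1+Lm)^s-1)$ over $m\geq0$, with $m=0$ excluded because it gives the element $0$. We treat primes $p\nmid L$ first, and then primes dividing $L$ using the lifting-the-exponent lemma.

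\emph{Primes $p\nmid L$.} Choose $m$ with $1+Lm\equiv 2\pmod p$ when $p$ is odd. For $p=2$ with $L$ odd, choose $1+Lm$ even. Then $(1+Lm)^s-1\equiv 2^s-1$ or is odd, which is not yet enough, since $2^s-1$ can be divisible by $p$. Instead, take a primitive root $r$ modulo $p$; because $\gcd(L,p)=1$, some $k\equiv1\pmod L$ has $k\equiv r\pmod p$. Then $k^s\equiv1\pmod p$ fails unless $(p-1)\mid s$. In that remaining case, use $k\equiv0\pmod p$: the Chinese remainder theorem lets us pick $k\equiv1\pmod L$ and $k\equiv0\pmod p$, and then $k^s-1\equiv-1\pmod p$. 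This single choice of $k\equiv 0\pmod p$ handles every $s$ directly, and also covers $p=2$ with $L$ odd. Hence $v_p(g_s(L))=0$.

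\emph{Odd $p\mid L$.} Put $k=1+Lm$, so $v_p(k-1)\geq v_p(L)\geq1$. The lifting-the-exponent lemma gives
\[
v_p(k^s-1)=v_p(k-1)+v_p(s)\geq v_p(L)+v_p(s).
\]
Equality holds for $m=1$, since $v_p(L)$ is exactly the valuation of $k-1=L$.

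\emph{$p=2$ with $4\mid L$.} Here $k\equiv1\pmod 4$, so the $2$-adic lifting-the-exponent lemma gives $v_2(k^s-1)=v_2(k-1)+v_2(s)$. Again $k=1+L$ attains the minimum $v_2(L)+v_2(s)$.

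\emph{$p=2$ with $2\parallel L$.} This is the exceptional case. The integer $k=1+Lm$ is odd, and $k\equiv 3\pmod 4$ is possible, for instance with $m=1$.
\begin{itemize}
\item For odd $s$, $(k^s-1)/(k-1)=1+k+\dots+k^{s-1}$ is a sum of $s$ odd terms and hence odd. So $v_2(k^s-1)=v_2(k-1)$, which is $1$ when $m$ is odd.
\item For even $s$, the general formula $v_2(k^s-1)=v_2(k-1)+v_2(k+1)+v_2(s)-1$ applies. Its minimum over odd $k$ is $1+2+v_2(s)-1=2+v_2(s)$, attained when $\{v_2(k-1),v_2(k+1)\}=\{1,2\}$, that is, $k\equiv 3$ or $5\pmod 8$. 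Such $k$ exist in the progression: $L\equiv2\pmod4$ means $1+Lm$ runs through both $3$ and $1\pmod 4$, and $k\equiv 3\pmod 8$ can be hit. The other possible values of $k\bmod 8$ give larger valuations.
\end{itemize}
This case is the only subtle step. The subtlety is that both $k-1$ and $k+1$ contribute to the valuation, so one must check that the progression reaches residues realizing the minimum.

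\emph{Closed form.} Multiplying the prime powers gives $g_s(L)=L\,s_L\,2^{\delta_L\mathbf 1_{2\mid s}}$. In the case $2\parallel L$ with $s$ even, $v_2(L)+v_2(s)+1=2+v_2(s)$, which matches the table. Taking the product over $s=1,\dots,t$ gives $\prod_s s_L=(t!)_L$, and the number of even $s\leq t$ is $\floor{t/2}$.

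\emph{Bounds.} Since $s_L\leq s$, we get $g_s(L)\leq 2Ls$. Since $L\geq2$ divides $g_s(L)$, we get $g_s(L)\geq2$.
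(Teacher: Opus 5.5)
Your proposal is correct and follows the paper's proof essentially step for step: the lifting-the-exponent identity at odd $p\mid L$ and at $2$ when $4\mid L$, the even-exponent $2$-adic formula with residues $3,5\pmod 8$ when $2\parallel L$, and the Chinese-remainder choice $k\equiv1\pmod L$, $k\equiv0\pmod p$ for $p\nmid L$. The primitive-root detour in the $p\nmid L$ case is unnecessary and can be deleted, since that Chinese-remainder choice settles the case for every $s$.
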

\begin{proof}
We recall the elementary valuation identities needed here. If $p$ is odd and $p\mid a-1$, then
\[
 v_p(a^s-1)=v_p(a-1)+v_p(s).
\]
To see this, an exponent prime to $p$ preserves the valuation by expanding $(1+b)^r-1$ with $p\mid b$; raising to the $p$th power increases it by exactly one. Iteration proves the formula. The same argument works for $p=2$ when $4\mid a-1$. For odd $a$, an odd exponent preserves $v_2(a-1)$, and an even exponent gives
\begin{equation}\label{eq:lte2}
 v_2(a^s-1)=v_2(a-1)+v_2(a+1)+v_2(s)-1.
\end{equation}
Indeed, factor off the first square and then apply the $4\mid a^2-1$ case to the remaining powers of two.

For odd $p\mid L$ and $k\equiv1\pmod L$ with $k>1$, we therefore have $v_p(k^s-1)\geq v_p(L)+v_p(s)$, with equality at $k=1+L$. If $4\mid L$, then $v_2(k+1)=1$, and the same conclusion holds at two.

Suppose $2\parallel L$. For odd $s$, the minimum is one. For even $s$, the first two terms on the right of \eqref{eq:lte2} sum to at least three. The progression $1+L\Z$ contains positive integers congruent to $3$ or $5$ modulo $8$, so the minimum is $2+v_2(s)$. Finally, when $p\nmid L$, the Chinese remainder theorem gives a positive $k$ with $k\equiv1\pmod L$ and $k\equiv0\pmod p$. Then $k^s-1$ is a $p$-adic unit. Taking the minimum valuation at each prime proves \eqref{eq:valuations}. Multiplication gives \eqref{eq:gclosed} and the stated estimates.
\end{proof}

\begin{proposition}[Three Adams operations]\label{prop:three}
The least multiplier in Proposition~\ref{prop:least} is realized by an integral linear combination of $\id$, $\psi^{k_1}$, and $\psi^{k_2}$.
\end{proposition}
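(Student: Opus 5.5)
We reduce Proposition~\ref{prop:three} to an arithmetic statement. By the proof of Proposition~\ref{prop:least}, it suffices to find two positive integers $k_1,k_2\equiv1\pmod L$ with
\[
 \gcd\bigl(k_1^s-1,\;k_2^s-1\bigr)=g_s(L).
\]
Given such a pair, B\'ezout gives integers $b_1,b_2$ with $g_s(L)=b_1(k_1^s-1)+b_2(k_2^s-1)$. Then
\[
 \Theta_s=(g_s(L)+b_1+b_2)\id-b_1\psi^{k_1}-b_2\psi^{k_2}
\]
has coefficient sum $g_s(L)$ and weighted sum $\sum n_ik_i^s=0$, as required. Since $g_s(L)$ divides every $k^s-1$ in the progression, we only need to control the valuations of the gcd at finitely many primes. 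For each $p$, we must make $\min(v_p(k_1^s-1),v_p(k_2^s-1))$ equal to the value in \eqref{eq:valuations}.

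\textbf{Choice of $k_1$.} For $4\mid L$ or $L$ odd, take $k_1=1+L$. By the valuation identities in the proof of Proposition~\ref{prop:valuations}, this attains the minimal valuation at every prime $p\mid L$ (and at $p=2$ when $4\mid L$). When $2\parallel L$, choose $k_1\equiv1\pmod L$ with $k_1\equiv3$ or $5\pmod 8$. This is possible by the Chinese remainder theorem, since $L/2$ is odd. It also gives $v_p(k_1-1)=v_p(L)$ at odd $p\mid L$: change $k_1$ by a multiple of $8L$ if necessary, or, simpler, take $k_1=1+L\cdot m$ with $m$ odd, prime to $L$, and chosen so that $k_1\bmod 8$ is right. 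With this choice, $k_1^s-1$ has the minimal valuation at every prime dividing $L$, and also at $2$ when $2\parallel L$. This holds in both parities of $s$, because $v_2(k_1^s-1)$ is $1$ for odd $s$ and $2+v_2(s)$ for even $s$.

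\textbf{Choice of $k_2$.} Let $N=k_1^s-1$, which is a positive integer since $k_1>1$. Its prime divisors $q\nmid L$ form a finite set $P$. By the Chinese remainder theorem, pick $k_2>0$ with $k_2\equiv1\pmod{L'}$ and $k_2\equiv0\pmod q$ for every $q\in P$. Here $L'$ is a suitable power multiple of $L$ chosen so that $k_2^s-1$ has valuation at least that of $g_s(L)$ at primes dividing $L$; this is automatic from $k_2\equiv1\pmod L$. Then $k_2^s-1$ is prime to every $q\in P$. The gcd of $k_1^s-1$ and $k_2^s-1$ is therefore supported on primes dividing $L$. At each such prime, $k_1$ already attains the minimum valuation, so the gcd equals $g_s(L)$.

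\textbf{Main obstacle.} The delicate point is the prime $2$ when $2\parallel L$. There $k_1$ must simultaneously hit $3$ or $5$ modulo $8$ and keep $v_p(k_1-1)=v_p(L)$ at the odd primes of $L$. I would handle this by writing $k_1=1+Lm$ and imposing congruence conditions on $m$ modulo $4$ and modulo each odd $p\mid L$, which are compatible. A fallback exists if a single $k_1$ proves awkward: handle the odd primes and $2$ with separate $k$'s, plus the $k_2$ killing stray primes. That would use three operations $\psi^{k_i}$, so the two-$k$ reduction above is what delivers the claimed bound.
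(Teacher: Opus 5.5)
Your proof is correct and is essentially the paper's. Both choose two elements of $1+L\Z$: one attains the minimal valuations at the primes dividing $L$, and the other is chosen by the Chinese remainder theorem to be divisible by every prime outside $L$ that divides the first $k^s-1$; B\'ezout then finishes. The paper fixes $k_1=1+L$ and puts both sets of congruences on $k_2$, whereas you put the minimal-valuation congruences on $k_1$.

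One small correction is needed. When $2\parallel L$ and $s$ is odd, $k_1\equiv5\pmod8$ gives $v_2(k_1^s-1)=2$ rather than $1$, so your claim that the minimum holds in both parities fails for that residue. Take $k_1\equiv3\pmod8$, which works for both parities of $s$ and is the only one of your two residues compatible with $k_1=1+Lm$ and $m$ odd.
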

\begin{proof}
Let $k_1=1+L$ and $A=k_1^s-1$. For each $p\mid L$, impose a congruence on $k_2$ attaining the minimum in \eqref{eq:valuations}. It can be imposed compatibly with $k_2\equiv1\pmod{p^{v_p(L)}}$. For every prime $\ell\mid A$ not dividing $L$, impose $k_2\equiv0\pmod\ell$. The Chinese remainder theorem supplies a positive $k_2\equiv1\pmod L$ with these properties. If $D=k_2^s-1$, then $\gcd(A,D)=g_s(L)$. Choose $uA+vD=g_s(L)$ and take
\begin{equation}\label{eq:three}
 \Theta_s=(g_s(L)+u+v)\id-u\psi^{k_1}-v\psi^{k_2}.
\end{equation}
\end{proof}

For instance, $2=-19(3^3-1)+4(5^3-1)$ gives
\[
 \Theta_3=-13\id+19\psi^3-4\psi^5\qquad(L=2).
\]
For any bundle, separating the positive and negative terms of the corresponding exterior-power expression gives a virtual bundle $[A]-[D]$.

\begin{remark}[A finite gcd formula]\label{rem:certificate}
The gcd is determined by the first $s+1$ values:
\begin{equation}\label{eq:finitegcd}
 g_s(L)=\gcd_{0\leq j\leq s}\big((1+Lj)^s-1\big).
\end{equation}
The polynomial $P(j)=(1+Lj)^s-1$ has degree $s$. Its Newton expansion in the integer-valued polynomials $\binom{j}{a}$ shows that $P(j)$, for every nonnegative integer $j$, is an integral linear combination of $P(0),\ldots,P(s)$. This proves \eqref{eq:finitegcd}.
\end{remark}

\section{Construction of regular bundles}\label{sec:construction}

We first prove Theorem~\ref{thm:main} for a locally finite complex of groups of dimension at most $n$. We allow $L_o,L_e$ to be any common multiples of the cell-group orders and exponents, respectively, with $L_e\geq2$. Set
\begin{equation}\label{eq:rstage}
 r_0=L_o,\qquad r_s=L_o\prod_{i=1}^s g_i(L_e).
\end{equation}

\begin{proposition}\label{prop:induction}
There is a regular bundle of rank $r_s$ on $X^{2s+2}$, whenever this skeleton occurs. If $\dim X=2s+1$, the same rank suffices on $X$.
\end{proposition}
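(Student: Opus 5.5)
We argue by induction on $s$. The base case $s=0$ is Lemma~\ref{lem:initial}, which gives a regular bundle $E_0$ of rank $r_0=L_o$ on $X^2$. For the inductive step, suppose $E$ is a regular bundle of rank $r=r_{s-1}$ on $X^{2s}$. We will construct a regular bundle of rank $r_s=g_s(L_e)\,r$ on $X^{2s+2}$ in two stages: first extend over the $(2s+1)$-cells, which is where the obstruction lives, and then extend over the $(2s+2)$-cells, which costs nothing.

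\emph{Stage one: the $(2s+1)$-cells.} Take the operation $\Theta_s=\sum_i n_i\psi^{k_i}$ from Proposition~\ref{prop:three}, with $k_i\equiv1\pmod{L_e}$, $\sum n_i=g_s(L_e)$ and $\sum n_ik_i^s=0$. Write it as an integral exterior-power expression and split it into positive and negative parts, giving a virtual bundle $\Theta_s(E)=[A]-[D]$ on $X^{2s}$. By Lemma~\ref{lem:clutch}, both $A$ and $D$ are genuine bundles there.

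Now fix a $(2s+1)$-cell with group $J$, so that $\exp(J)\mid L_e$. Pull $E$ back along the attaching map to $S^{2s}\times BJ$. Its class there is
\[
(r/|J|)\C[J]+\beta_s a,\qquad a\in R(J),
\]
because restriction of a regular fibre is regular by \eqref{eq:restriction}. By \eqref{eq:adams}, each $\psi^{k_i}$ acts as the identity on $R(J)$ and as multiplication by $k_i^s$ on $\beta_s$. Hence the boundary class of $\Theta_s(E)$ is
\[
g_s(L_e)(r/|J|)\C[J],
\]
with zero reduced part.

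To turn this into an actual extendable bundle, we stabilize. Add a large multiple $N\C[L_o\text{-regular}]$, that is, $N$ copies of a trivial regular bundle of rank $L_o$, to both $A$ and $D$. Choose $N$ so that the multiplicity gaps satisfy \eqref{eq:gap}. Lemma~\ref{lem:embedding}, applied on $X^{2s}$ cell by cell, then gives a monomorphism $D'\hookrightarrow A'$. The quotient $F=A'/D'$ is a regular bundle of rank $g_s(L_e)r$ on $X^{2s}$ whose class is $\Theta_s(E)$. This is the same mechanism as Corollary~\ref{cor:positive}: the multiplicity of each irreducible $\rho$ in $F$ is $g_s(L_e)(r/|J|)\dim\rho$.

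On each boundary sphere, $F$ has trivial reduced class, and its multiplicities satisfy $2u_\rho\geq 2s$. This holds because
\[
u_\rho\geq g_s(L_e)\geq2\quad\text{and}\quad r\geq L_o\prod g_i\geq 2^{s}\geq s
\]
give $u_\rho\geq s$. Lemma~\ref{lem:cancellation} with $q=2s$ then shows that the pullback of $F$ to $S^{2s}\times BJ$ is isomorphic to the constant bundle $(r_s/|J|)\C[J]$. So $F$ extends across the cell, and Lemma~\ref{lem:clutch} glues these extensions over all $(2s+1)$-cells. If $\dim X=2s+1$, we stop here.

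\emph{Stage two: the $(2s+2)$-cells.} On $S^{2s+1}\times BJ$ there is no reduced $K$-theory obstruction in the relevant sense: the multiplicity bundles are complex bundles on an odd sphere. A complex bundle of rank $u$ on $S^{2s+1}$ is trivial when $\pi_{2s}(U(u))=0$, which holds once $u\geq s+1$. Our multiplicities $u_\rho\geq g_s r/|J|$ meet this bound, so each multiplicity bundle extends over the disk, and Lemma~\ref{lem:clutch} glues the result.

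\emph{Main obstacle.} The delicate point is stage one. The virtual class $\Theta_s(E)$ must be realized by an actual regular bundle of exactly rank $r_s$ on $X^{2s}$. Adding stabilizing summands inflates the ranks of $A$ and $D$, but not the rank of the difference, so the rank itself is not the problem. What must be checked is that \eqref{eq:gap} holds for dimension $j\leq 2s$ on the difference multiplicities, i.e.\ that $2u_\rho\geq 2s-1$. This needs the lower bound $u_\rho\geq s$, which I would derive from $r\geq L_o 2^{s-1}$ together with $g_s\geq2$.

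The second point to verify is that the boundary class computation respects the clutching isomorphism. This follows because Lemma~\ref{lem:clutch} says that exterior powers commute with the clutching equivalence, so applying $\Theta_s$ before or after pulling back along the attaching map gives the same class.
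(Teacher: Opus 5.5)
Your proof is correct and is essentially the paper's argument: apply $\Theta_s$ to $E$, split the exterior-power expression as $[A]-[D]$, embed $D$ in $A$ by Lemma~\ref{lem:embedding} using the intrinsic bound $u_\rho=g_s(L_e)(r_{s-1}/|H|)\dim\rho\geq 2^s$, make $A/D$ constant on each $S^{2s}\times BJ$ by Lemma~\ref{lem:cancellation}, and extend over the $(2s+2)$-cells because $\pi_{2s}(U(u))=0$ in this range. The one flaw is the stabilization by $N$ extra copies, which you should delete: adding the same bundle to $A$ and $D$ leaves every gap $a_\rho-d_\rho$ in \eqref{eq:gap} unchanged, and a ``trivial regular bundle of rank $L_o$'' need not exist on $X^{2s}$, so the embedding rests entirely on the bound $u_\rho\geq 2^s\geq s$ that you correctly isolate in your last paragraph.
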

\begin{proof}
Lemma~\ref{lem:initial} gives $s=0$. Suppose $s\geq1$ and $E$ is the regular bundle of rank $r_{s-1}$ on $X^{2s}$. On the boundary of a characteristic $(2s+1)$-cell with group $J$, write
\[
 [E]=W+\beta_s a,\qquad
 W=\frac{r_{s-1}}{|J|}\C[J],\quad a\in R(J).
\]
Choose $\Theta_s$ by Proposition~\ref{prop:least}, with $L=L_e$. Equations \eqref{eq:adams} and \eqref{eq:moments} give
\begin{equation}\label{eq:cancel}
 \Theta_s[E]|_{S^{2s}\times BJ}=g_s(L_e)W.
\end{equation}

Newton identities express each positive Adams operation as an integral polynomial in exterior powers. Collect positive and negative terms in the finite expression $\Theta_s[E]$ to write it as $[A]-[D]$ for actual bundles on $X^{2s}$. At every point its fibre representation is $g_s(L_e)E_x$. Thus its multiplicity in an irreducible $\rho$ of the stabilizer $H$ is
\begin{equation}\label{eq:multiplicity}
 u_\rho=\frac{r_s}{|H|}\dim\rho
 \geq\prod_{i=1}^s g_i(L_e)\geq2^s.
\end{equation}
The estimate bounds the multiplicities of the virtual difference $[A]-[D]$, as required by Lemma~\ref{lem:embedding}.

Since $2^{s+1}>2s+1$, Lemma~\ref{lem:embedding} embeds $D$ into $A$ on $X^{2s}$. Let $F=A/D$. On each characteristic boundary its multiplicity bundles have trivial reduced classes by \eqref{eq:cancel}. Their ranks satisfy $2u_\rho>2s$, so Lemma~\ref{lem:cancellation} makes them trivial. Clutch $F$ to the constant bundle with fibre $g_s(L_e)W$ on the disk. This extends $F$ over $X^{2s+1}$.

If $X^{2s+2}$ occurs, its characteristic boundaries are $S^{2s+1}\times BJ$. The reduced complex $K^0$ of an odd sphere vanishes. The bound $2u_\rho>2s+1$ therefore shows that the multiplicity bundles on these boundaries are trivial and permits the next attachment with the same rank. Lemma~\ref{lem:clutch} completes the induction.
\end{proof}

The proof uses one finite bundle expression in each dimension. Local finiteness suffices for clutching, and the multiplicity estimate \eqref{eq:multiplicity} is independent of the number of cells.

\begin{proof}[Proof of Theorem~\ref{thm:main}]
If all stabilizers are trivial, the trivial line bundle suffices. Otherwise, Pardon's approximation theorem \cite[Proposition 4.9]{Pardon} supplies a representable map
\[
 f:X\longrightarrow\|(Z,G)\|
\]
to a locally finite complex of finite groups of dimension at most $n$. Its vertex groups can be taken to be stabilizer groups of $X$. Every other cell group injects into a vertex group. Its order consequently divides $L_o$ and its exponent divides $L_e$. Proposition~\ref{prop:induction} constructs a bundle of rank $r_t$ on this complex. Pullback is regular by \eqref{eq:restriction} and representability. Formula \eqref{eq:gclosed} identifies $r_t$ with \eqref{eq:rank}.

For a smooth orbifold, \cite[Proposition 4.1]{BL} gives a smooth Hermitian bundle $E^\infty$ and a continuous equivariant isomorphism $\Phi:E\to E^\infty$. The local smoothing construction is as follows. On a sufficiently small finite-group chart, averaging a local trivialization gives a continuous equivariant identification with the product of the chart and the representation at its centre. This equips the bundle locally with a smooth structure. On an overlap, a continuous equivariant bundle isomorphism $h$ between two such structures can be approximated by a smooth equivariant isomorphism $k$: approximate its matrix entries, average over the finite group, and keep the error small enough to preserve invertibility.

For the gluing over open substacks $A$ and $B$, choose the error so that $kh^{-1}$ tends to the identity at the boundary of $A\cap B$ in $B$. It then extends by the identity to a continuous equivariant automorphism on $B$. Transporting the local smooth structure by this automorphism makes it agree with the given structure on $A$. A countable cover and successive gluings produce a global smooth structure; every point has a neighbourhood on which the structure is fixed after finitely many steps. This is the overlap construction in the proof of \cite[Proposition 4.1]{BL}. A smooth invariant Hermitian metric is obtained by averaging and a partition of unity. For each $g\in G_x$, equivariance gives
\[
 \Phi_x\,\rho_x(g)=\rho_x^\infty(g)\,\Phi_x.
\]
Thus every full-stabilizer representation, including its ineffective part, is unchanged.

Regular fibres are faithful. Hence the presenting \'etale groupoid acts freely and properly on the unitary frame atlas, and its quotient is a smooth manifold. The commuting right $U(\rk E)$-action has stabilizers conjugate to the original $G_x$ and gives the asserted presentation; see \cite[Section 5]{BL}. This action is proper because the group is compact. Its orbit map is proper, so the frame manifold is compact exactly when its orbit space $|X|$ is compact.
\end{proof}

\begin{corollary}[Growth]\label{cor:growth}
For fixed $L\geq2$,
\[
 \prod_{s=1}^t g_s(L)\leq C(L)^t,
 \qquad
 \lim_{t\to\infty}\left(\prod_{s=1}^t g_s(L)\right)^{1/t}=C(L).
\]
\end{corollary}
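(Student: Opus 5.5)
We start from the closed form \eqref{eq:gclosed}, which gives
\[
 \prod_{s=1}^t g_s(L)=L^t\,(t!)_L\,2^{\delta_L\floor{t/2}},
\]
and compare it factor by factor with $C(L)^t=L^t\,2^{\delta_L t/2}\prod_{p\mid L}p^{t/(p-1)}$. The factor $L^t$ is common to both sides. For the $2$-primary correction we have $2^{\delta_L\floor{t/2}}\leq 2^{\delta_L t/2}$. After these cancellations, the upper bound reduces to the estimate $(t!)_L\leq\prod_{p\mid L}p^{t/(p-1)}$, and it suffices to prove $v_p(t!)\leq t/(p-1)$ for each prime $p\mid L$.

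For the valuation bound we use Legendre's formula,
\[
 v_p(t!)=\sum_{i\geq1}\floor{t/p^i}=\frac{t-S_p(t)}{p-1},
\]
where $S_p(t)\geq0$ is the base-$p$ digit sum of $t$. This gives $v_p(t!)\leq t/(p-1)$ directly. A cruder route bounds the floor sum by the geometric series $\sum_i t/p^i=t/(p-1)$, which gives the same inequality. Multiplying over the finitely many $p\mid L$ yields the upper bound for every $t\geq1$. The case $t=0$ is trivial, since both sides equal $1$.

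For the limit we take $t$-th roots of the exact product:
\[
 \Bigl(\prod_{s=1}^t g_s(L)\Bigr)^{1/t}=L\cdot 2^{\delta_L\floor{t/2}/t}\cdot\prod_{p\mid L}p^{v_p(t!)/t}.
\]
We have $\floor{t/2}/t\to1/2$. By Legendre's formula, $v_p(t!)/t=(1-S_p(t)/t)/(p-1)$. Since $S_p(t)\leq(p-1)(\log_p t+1)=O(\log t)$, this ratio tends to $1/(p-1)$. The product is finite and each factor is continuous in its exponent, so the limit is $L\,2^{\delta_L/2}\prod_{p\mid L}p^{1/(p-1)}=C(L)$.

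The main obstacle is bookkeeping rather than depth. One has to check that the $2$-primary exceptional factor is handled consistently with $\delta_L$ in \eqref{eq:constant}. One also has to check that $(t!)_L$ only involves primes dividing $L$, which holds by the definition of $a_L$. Beyond that, the only needed input is the $O(\log t)$ bound on digit sums, which makes the lower asymptotic match the upper bound.
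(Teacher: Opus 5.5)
Your proof is correct and follows the paper's own argument. Like the paper, you insert Legendre's bound $v_p(t!)\leq t/(p-1)$ and the limit $v_p(t!)/t\to1/(p-1)$ into the closed form \eqref{eq:gclosed}, and you use $\floor{t/2}/t\to1/2$; the digit-sum expression $v_p(t!)=(t-S_p(t))/(p-1)$ is only a more explicit form of the same estimate.
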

\begin{proof}
Legendre's formula gives
\[
 v_p(t!)=\sum_{a\geq1}\floor{t/p^a}\leq\frac{t}{p-1},
 \qquad \frac{v_p(t!)}t\longrightarrow\frac1{p-1}.
\]
Apply these formulas to \eqref{eq:gclosed}, and use $\floor{t/2}/t\to1/2$.
\end{proof}

Table~\ref{tab:ranks} compares the Adams construction with Part~I when $L_o=L_e=\lcm(1,\ldots,B)$. The two rank bounds agree through dimension six when $B=2$; their first difference occurs at $s=3$, where $g_3(2)=2$ instead of $3^3-1=26$. The geometric bound of Theorem~\ref{thm:order_two} improves the order-two ranks already in dimension five; its values appear in Table~\ref{tab:order_two}.

\begin{table}[ht]
\centering
\caption{Regular ranks from Part~I and Theorem~\ref{thm:main} in dimension at most $n$, for stabilizer orders at most $B$ and $L=L_o=L_e=\lcm(1,\ldots,B)$.}\label{tab:ranks}
\begin{tabular}{@{}rrrrr@{}}
\toprule
$B$ & $n$ & $L$ & Part~I & $\Rsharp$\\
\midrule
2&3,4&2&4&4\\
2&5,6&2&32&32\\
2&7,8&2&832&64\\
2&9,10&2&66560&1024\\
3&3,4&6&36&36\\
3&5,6&6&1728&864\\
3&7,8&6&590976&15552\\
\bottomrule
\end{tabular}
\end{table}

\begin{corollary}[Quantitative cofinality]\label{cor:cofinal}
Let $f:X\to Y$ be a representable map of paracompact orbispaces, with $\dim|X|\leq d$. Let $F$ be a regular bundle on $Y$, and let $E$ have rank at most $r$ on $X$. There is a bundle monomorphism
\[
 E\longrightarrow f^*F^{\oplus(d+1)r}.
\]
\end{corollary}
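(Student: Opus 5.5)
The plan is to reduce the statement to a relative embedding problem on a complex of groups, and then apply Lemma~\ref{lem:embedding} with $Y=\varnothing$. First, Pardon's representable approximation \cite[Proposition 4.9]{Pardon}, used as in the proof of Theorem~\ref{thm:main}, gives a representable map $h:X\to\|(Z,G)\|$ to a locally finite complex of groups of dimension at most $d$. We choose the approximation fine enough that $E$ is pulled back from a bundle $E'$ on $\|(Z,G)\|$. If this cannot be arranged directly, we replace $E$ by the pullback of a classifying map into a Grassmannian-type stack over the product; an alternative is to approximate the representable map $(f,\text{classifying data}):X\to Y\times\mathcal{B}$, where $\mathcal B$ encodes rank-$\le r$ bundles. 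We also arrange that $f^*F$ is pulled back from a regular bundle $F'$ on the complex. Because every map in sight is representable, the pullback of $F'$ to each characteristic cell $D^j\times BG_\sigma$ is regular, with fibre a positive multiple $m\,\C[G_\sigma]$, $m\ge1$.

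The core step is a multiplicity count. Set $A=F'^{\oplus(d+1)r}$ and $D=E'$. On a cell with group $J=G_\sigma$ and an irreducible $\rho$, the multiplicity of $\rho$ in $D$ is $d_\rho\le r/\dim\rho\le r$. Since the fibre of $F'$ contains $\C[J]$, each $\rho$ occurs in $F'$ with multiplicity at least $\dim\rho\ge1$. Hence
\[
 a_\rho\ge (d+1)r\dim\rho\ge(d+1)r .
\]
This gives
\[
 a_\rho-d_\rho\ge (d+1)r-r=dr\ge d,
 \qquad\text{so}\qquad 2(a_\rho-d_\rho)\ge 2d\ge j-1
\]
for every $j\le d$, whenever $d_\rho>0$. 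So \eqref{eq:gap} holds on every cell. Lemma~\ref{lem:embedding} then produces a monomorphism $E'\to F'^{\oplus(d+1)r}$, and pulling it back along $h$ gives the desired monomorphism $E\to f^*F^{\oplus(d+1)r}$. The argument even has slack: $r+d$ copies would suffice. The stated $(d+1)r$ is presumably chosen for a cleaner formula, or to absorb a less careful estimate.

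The main obstacle is the first step: realizing $E$ and $f^*F$ simultaneously as pullbacks from a single finite-dimensional complex of groups of dimension at most $d$. Pardon's theorem approximates $X$, not pairs of bundles. I would handle this with partitions of unity. On a locally finite cover of $|X|$ with nerve of dimension at most $d$, by finite-group charts $U_i=[V_i/G_i]$, take local equivariant trivializations. We then build the monomorphism directly, chart by chart, over the nerve's cells. There is a chart-by-chart obstruction-theoretic extension using Stiefel connectivity, with the same inequality as above, but carried out over the $(d+1)$-colourable refinement. This refinement matches the factor $d+1$ in the statement: each colour class consists of disjoint charts, and colour class $i$ receives its own $r$ copies of $F$, into which $E$ maps locally by an equivariant embedding $E_x\hookrightarrow (f^*F)_x^{\oplus r}$. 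Such an embedding exists because $(f^*F)_x$ contains $\C[G_x]$, which contains every irreducible. Combining these local embeddings with a partition of unity subordinate to the colour classes gives a global injective map, since at each point some weight is nonzero on an injective summand. I would present this colouring argument as the proof, because it avoids the approximation issue entirely, and keep the cellular computation as a heuristic check.
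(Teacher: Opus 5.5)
Your final colouring argument is the paper's proof. It has the same steps: pointwise equivariant injections $E_x\hookrightarrow(f^*F)_x^{\oplus r}$, which exist because representability puts $\C[G_x]$ inside $(f^*F)_x$; extension of these over small finite-group charts; a refinement into $d+1$ families of pairwise disjoint sets; a subordinate partition of unity; and one copy of $f^*F^{\oplus r}$ per family. The cellular reduction, with the simultaneous-approximation gap you identified, and the sentence about Stiefel-connectivity obstruction theory play no role in that argument and should simply be dropped.
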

\begin{proof}
Every irreducible multiplicity of $f^*F$ is at least one. Every multiplicity of $E$ is at most $r$. Hence $r$ copies of $f^*F$ admit a fibrewise equivariant injection at any chosen point. In a finite-group chart, local trivialization, averaging, and shrinking extend it to a local bundle monomorphism.

Take a locally finite refinement of these neighbourhoods of order at most $d+1$. Its barycentric refinement is a union of $d+1$ families in which distinct members of the same family are disjoint. This is the cover construction in \cite[proof of Corollary 1.4]{Pardon}. Choose a subordinate partition of unity. Multiply each local injection by its partition function, and sum maps from the same family into one copy of $f^*F^{\oplus r}$. At a point at least one partition function is positive, and in that family at most one term is nonzero. The map to the direct sum of all $d+1$ families is therefore injective.
\end{proof}

\begin{proposition}[Embeddings from regular bundles]\label{prop:linear_embedding}
Let $X$ be a Hausdorff second-countable finite-dimensional smooth orbifold without boundary, and let $r\geq1$. Then $X$ admits a smooth regular complex bundle of rank $r$ if and only if there are a finite-dimensional real $U(r)$-representation $V$ and a representable smooth embedding
\[
 j:X\hookrightarrow[V/U(r)]
\]
for which the pullback of the standard associated complex bundle
$[V\times\C^r/U(r)]\to[V/U(r)]$ is regular. Given such a bundle $E$, the embedding may be chosen closed, with this pullback isomorphic to $E$.
\end{proposition}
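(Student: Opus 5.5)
We prove the two implications separately; the forward direction carries all the work. Suppose first that $X$ carries a smooth regular bundle $E$ of rank $r$. We choose an invariant Hermitian metric, by averaging and a partition of unity, and let $P\to X$ be its unitary frame bundle. As in the proof of Theorem~\ref{thm:main}, regular fibres are faithful, so $P$ is a smooth manifold carrying a right $U(r)$-action with stabilizers conjugate to the $G_x$, and $X\simeq[P/U(r)]$. Moreover
\[
 E\cong P\times_{U(r)}\C^r .
\]
The action is proper because $U(r)$ is compact. The number of orbit types is locally finite, since they are controlled by the conjugacy classes of the $G_x$ as embedded subgroups of $U(r)$.

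The central step, and the main obstacle, is to produce a closed $U(r)$-equivariant embedding $\iota:P\hookrightarrow V$ into a finite-dimensional real representation. For compact $X$ this is Mostow--Palais. In general we use the version for proper actions of compact Lie groups on manifolds with finitely many orbit types (Wasserman). Finiteness of orbit types is the point needing care. We obtain it from finite-dimensionality together with the bound on stabilizer orders: by Corollary~\ref{cor:cofinal}-type arguments, or directly, a second-countable finite-dimensional orbifold carrying a regular rank-$r$ bundle has stabilizers of order dividing $r$. Since $U(r)$ has only finitely many conjugacy classes of finite subgroups of bounded order acting freely on $\C^r$, the orbit types are finite.

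If this reduction proves problematic, we fall back on a direct construction. Embed $|X|$ properly in $\R^N$ (so the $U(r)$-invariant composite $P\to|X|\to\R^N$ is proper). Cover $X$ by countably many charts grouped into $\dim X+1$ disjoint families, as in Corollary~\ref{cor:cofinal}. On each family, use slice representations $U(r)\times_{G_x}W$ to embed into $\operatorname{Ind}$-type representations. Sum these with partition-of-unity weights, and adjoin the proper invariant map to make the total embedding closed.

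Given $\iota$, it descends to a representable smooth map $j:[P/U(r)]\to[V/U(r)]$. Equivariance and injectivity of $\iota$, injectivity of its differential, and closedness make $j$ a closed embedding of stacks. The pullback of $[V\times\C^r/U(r)]$ along $j$ is $P\times_{U(r)}\C^r\cong E$, which is regular.

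For the converse, let $j:X\hookrightarrow[V/U(r)]$ be a representable smooth embedding with regular pullback. Then $j^*[V\times\C^r/U(r)]$ is a smooth complex bundle of rank $r$ on $X$: pullback of a smooth vector bundle along a representable smooth map is smooth. By hypothesis it is regular, which finishes the equivalence.
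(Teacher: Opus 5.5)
Your proposal is correct and follows essentially the same route as the paper. You pass to the unitary frame manifold $P$ with $X\simeq[P/U(r)]$, and you get finitely many orbit types because every stabilizer has order dividing $r$ and $U(r)$ has only finitely many conjugacy classes of finite subgroups of bounded order; drop the phrase ``acting freely on $\C^r$'', which is false and not needed, and the fallback construction is unnecessary. You then take a closed Mostow--Palais--Wasserman equivariant embedding $P\hookrightarrow V$, descend it to $[P/U(r)]\to[V/U(r)]$ (whose base change along $V\to[V/U(r)]$ is $P\hookrightarrow V$), and pull back the standard bundle to $P\times_{U(r)}\C^r\cong E$; the paper differs only in detail, assembling $V$ from Palais embeddings of the homogeneous spaces $U(r)/H$ together with $\C^r$ before invoking Wasserman's local and global embedding results.
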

\begin{proof}
Give $E$ a Hermitian metric. Its unitary frame space $P$ is a manifold and $X\simeq[P/U(r)]$, as in the proof of Theorem~\ref{thm:main}. Every stabilizer $H\subset U(r)$ acts on $\C^r$ as $(r/|H|)\C[H]$, so $|H|$ divides $r$. There are finitely many abstract groups of these orders, and for each one this representation determines its image in $U(r)$ up to conjugacy. Thus $P$ has finitely many orbit types.

For each of these subgroups $H$, the compact homogeneous space $U(r)/H$ admits an equivariant embedding in an orthogonal representation, by \cite[p.~673]{Palais}. Let $W$ be the sum of these representations and the underlying real representation of $\C^r$. Its restriction to each $H$ contains every irreducible real $H$-representation. The slice representations of $P$ therefore embed in finite direct sums of $W$. By \cite[Proposition~1.7]{Wasserman}, $P$ admits local equivariant embeddings in finite direct sums of $W$. For possibly noncompact $P$, Proposition~1.9 and Corollary~1.5 of that paper give a closed equivariant smooth embedding $P\hookrightarrow V$, where $V$ is a finite direct sum of copies of $W$.

Passing to quotient stacks gives $j$. Its base change along $V\to[V/U(r)]$ is the embedding $P\hookrightarrow V$, so $j$ is representable and closed. The standard associated bundle pulls back to $P\times_{U(r)}\C^r\cong E$. The converse follows by pulling back that bundle.
\end{proof}

The normal bundle of $P\hookrightarrow V$ descends to a real bundle $\nu$ on $X$. Splitting the equivariant tangent sequences gives an isomorphism of actual real bundles
\[
 TX\oplus\operatorname{ad}(P)\oplus\nu
 \cong P\times_{U(r)}V,
 \qquad
 \operatorname{ad}(P)=P\times_{U(r)}\mathfrak u(r).
\]
Indeed, $TP\cong\pi^*TX\oplus(P\times\mathfrak u(r))$ and $TP\oplus\nu_P\cong P\times V$, where $\pi:P\to X$ and $\nu_P$ is the normal bundle. Invariant metrics supply both splittings. For an ordinary manifold the minimum regular rank is one; its minimum Euclidean immersion or embedding dimension is a separate invariant.

\section{Optimality of universal operations}\label{sec:operations}

\subsection{Arbitrary positive Adams indices}
We prove Theorem~\ref{thm:additiveintro} in its algebraic form.

\begin{theorem}\label{thm:additive}
Let $L=\lcm(1,\ldots,B)$, where $B\geq2$. Suppose $\Theta=\sum_kn_k\psi^k$ is a finite integral sum with positive indices and, for every group $G$ of order at most $B$,
\begin{equation}\label{eq:universal}
 \sum_kn_kk^s\psi^k(a)=0\quad(a\in R(G)),
 \qquad
 \sum_kn_k\psi^k(\C[G])=c\C[G].
\end{equation}
Then $g_s(L)\mid c$.
\end{theorem}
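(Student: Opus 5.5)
The plan is to test the two conditions in \eqref{eq:universal} only on cyclic groups $\mu_m$ with $m\leq B$. Then I would extract, prime by prime, congruences on $c=\sum_kn_k$ that match the valuations in Proposition~\ref{prop:valuations}. Since $\Rsharp$-type multipliers are products of prime powers, it suffices to show $v_p(c)\geq v_p(g_s(L))$ for each prime $p\mid L$. Write $p^v\parallel L$, so that $p^v\leq B$ and both $\mu_p$ and $\mu_{p^v}$ are admissible.

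\emph{Step 1 (the regularity condition).} Let $G=\mu_m$ and $k\geq1$. By \eqref{eq:adams}, $\psi^k(\C[G])$ has character $g\mapsto m$ if $g^k=1$ and $0$ otherwise. Evaluating $\sum_kn_k\psi^k(\C[G])=c\,\C[G]$ at an element $g$ of order $e$ gives two facts. At $e=1$ it gives $\sum_kn_k=c$. For $e>1$ it gives $m\sum_{e\mid k}n_k=0$. Applying this with $m=e=p$ yields
\[
\sum_{p\mid k}n_k=0,\qquad\text{hence}\qquad c=\sum_{p\nmid k}n_k .
\]

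\emph{Step 2 (the killing condition).} Take $G=\mu_{p^v}$ and let $a=\chi$ be a faithful character. Then $\psi^k(\chi)=\chi^k$, and the characters $\chi^r$ for $r\in\Z/p^v$ form a basis of $R(G)$. So the first condition in \eqref{eq:universal} splits by residue class:
\[
\sum_{k\equiv r\ (p^v)}n_kk^s=0\qquad\text{for every residue } r.
\]
Fix a unit class $r$ and an integer representative $r$ prime to $p$. Then
\[
r^s\sum_{k\equiv r}n_k=\sum_{k\equiv r}n_k\,(r^s-k^s).
\]
The valuation identities in the proof of Proposition~\ref{prop:valuations}, applied to $(k/r)^s-1$ in $\Z_p$ with $k/r\equiv1\pmod{p^v}$, bound $v_p(r^s-k^s)$ from below. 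For $p$ odd, or for $p=2$ with $4\mid L$, the bound is $v+v_p(s)$. For $2\parallel L$ with $s$ odd, the bound is $1$. For $2\parallel L$ with $s$ even, \eqref{eq:lte2} gives at least $1+2+v_2(s)-1=2+v_2(s)$, because $v_2(k-r)+v_2(k+r)\geq3$ for odd $k,r$. Since $r$ is a $p$-adic unit, each class sum $\sum_{k\equiv r}n_k$ is divisible by $p^{v_p(g_s(L))}$.

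\emph{Step 3 (conclusion).} Summing over the unit classes and using Step 1 gives $v_p(c)\geq v_p(g_s(L))$ for every $p\mid L$. Primes not dividing $L$ impose nothing, by \eqref{eq:valuations}. Hence $g_s(L)\mid c$.

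Attainability is already supplied by Proposition~\ref{prop:least}. Its indices satisfy $k\equiv1\pmod L$, so the corresponding $\psi^k$ act as the identity on every $R(G)$ with $|G|\leq B$, because $\exp(G)\mid L$.

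I expect the main obstacle to be the $2$-primary case $2\parallel L$ with $s$ even. There one needs the extra factor $4$ rather than $2$, and this requires the sharper estimate \eqref{eq:lte2} applied to differences $k^s-r^s$ of two odd numbers, rather than to $k^s-1$. I would check that step carefully. It is also where one must use $\mu_2$-classes mod $2$ only, since $\mu_4$ is unavailable when $B<4$.
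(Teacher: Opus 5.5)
Your proposal is correct and follows essentially the paper's proof: restrict to the cyclic group of order $q=p^{v_p(L)}$, split the killing condition into residue classes modulo $q$, use the valuation identities to show that each unit-class sum of the $n_k$ is divisible by $p^{v_p(g_s(L))}$, and identify $c=\sum_{p\nmid k}n_k$ from the regularity condition. The only cosmetic difference is that you get this last identity by evaluating characters at an element of order $p$ in $\mu_p$, whereas the paper reads off the coefficient of a faithful character of $C_q$ in $\psi^k(\C[C_q])$.
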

\begin{proof}
Fix $p\mid L$ and put $q=p^{v_p(L)}$. Since $q\leq B$, we can use the cyclic group $C_q$. Write $z$ for a generating character. The first equation of \eqref{eq:universal}, applied to $z$, says
\begin{equation}\label{eq:residue}
 \sum_{k\equiv a\; (\mathrm{mod}\ q)}n_kk^s=0
 \qquad(a\in\Z/q).
\end{equation}
Let $h=g_s(q)$, a power of $p$. If $a$ is a unit modulo $q$, all integers $k\equiv a\pmod q$ have the same $s$th power modulo $h$. For odd $p$, or for $p=2$ and $q\geq4$, apply the valuation calculation of Proposition~\ref{prop:valuations} to $k/a$ in the $p$-adic integers. Equivalently, use its binomial proof after clearing the unit $a^s$. For $q=2$ and even $s$, every odd $k$ satisfies
\[
 k^s\equiv1\pmod{2^{2+v_2(s)}};
\]
for odd $s$ the required modulus is just two. Thus \eqref{eq:residue} gives
\[
 h\mid\sum_{k\equiv a\; (\mathrm{mod}\ q)}n_k
 \qquad(p\nmid a).
\]

The coefficient of $z$ in $\psi^k(\C[C_q])$ counts the solutions of $kj\equiv1\pmod q$. It is one when $p\nmid k$ and zero otherwise. Comparing this coefficient in the second equation of \eqref{eq:universal} yields
\[
 c=\sum_{p\nmid k}n_k.
\]
Hence $h\mid c$. Multiplying over the primes dividing $L$ gives $g_s(L)\mid c$ by \eqref{eq:valuations}. Proposition~\ref{prop:least} attains equality, proving Theorem~\ref{thm:additiveintro}.
\end{proof}

The next example shows how restricting the stabilizer groups can lower the multiplier.

\begin{example}[A smaller multiplier for quaternion stabilizers]\label{ex:q8}
Every irreducible complex character of $Q_8$ is invariant under inversion. For odd $k$, the power map on each element of $Q_8$ is either identity or inversion. Hence $\psi^k=\id$ on $R(Q_8)$ for every odd $k$. On a constant-$Q_8$ gerbe, the construction can therefore use $g_s(2)$ with initial rank eight. The first step uses $3\id-\psi^3$ and produces rank $16$, while the bound obtained from the group exponent has rank $8\cdot4=32$.

On $R(C_4)$, $\psi^3$ interchanges the two faithful characters. The preceding argument therefore depends on the chosen family of stabilizers.
\end{example}

\subsection{Tensor cancellation}
Tensor products give further cancellation formulas. Their ranks can be compared with those obtained from Adams operations.

\begin{proposition}\label{prop:tensors}
Suppose on $S^{2s}\times BJ$ that $[E]=W+\beta_sa$, where $W$ is regular of rank $r$.
\begin{enumerate}
\item If $s$ is odd, then $[E\otimes\overline E]=W^2$, of rank $r^2$.
\item If $r$ is even, the integral expression
\[
 P_r(E)=\frac{3r}{2}E^{\otimes2}-E^{\otimes3}
\]
has class $(r^2/2)W$, of rank $r^3/2$, for every $s$.
\end{enumerate}
\end{proposition}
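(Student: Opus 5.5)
Both parts are direct computations in the ring $K^0(S^{2s}\times BJ)=K^0(S^{2s})\otimes R(J)$, using $\beta_s^2=0$ and one fact about the regular representation. I will use the identity $\C[J]\otimes V\cong(\dim V)\,\C[J]$ for every representation $V$. It follows from characters, since $\chi_{\C[J]}$ is supported at the identity. Consequently $W\cdot x=\frac{r}{|J|}\,\C[J]\,x=\frac{r}{|J|}\,\varepsilon(x)\,\C[J]$ for $x\in R(J)$, where $\varepsilon(x)=\dim x$ is the augmentation. Since $[E]$ is an actual bundle of rank $r$, its reduced part $\beta_s a$ has rank zero, but $\varepsilon(a)$ itself need not vanish. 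The whole proof is organised around that distinction: cross terms $W\beta_s a$ contribute $\beta_s\,r\,\varepsilon(a)\,\frac{\C[J]}{|J|}$, and the expressions must be chosen so that these contributions cancel.

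For (i), I will use that complex conjugation acts on $\tK^0(S^{2s})$ by $(-1)^s$, since $\beta_s$ is the $s$-fold external product of $\beta_1$ and $\overline{\beta_1}=-\beta_1$. For odd $s$ this gives $[\overline E]=\overline W-\beta_s\bar a$, and $\overline W\cong W$ because the regular representation is self-conjugate. Expanding with $\beta_s^2=0$ gives
\[
 [E\otimes\overline E]=W^2+\beta_s\bigl(W\bar a-W a\bigr).
\]
The cross term is $\beta_s\frac{r}{|J|}\bigl(\varepsilon(\bar a)-\varepsilon(a)\bigr)\C[J]=0$. Finally $W^2=\frac{r}{|J|}\,r\,\C[J]$ is regular of rank $r^2$.

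For (ii), I will expand the powers using $\beta_s^2=0$:
\[
 E^{\otimes2}=W^2+2W\beta_s a,\qquad E^{\otimes3}=W^3+3W^2\beta_s a.
\]
With $m=\varepsilon(a)$, these give $W\beta_s a=\beta_s m\frac{r}{|J|}\C[J]$ and $W^2\beta_s a=\beta_s m\frac{r^2}{|J|}\C[J]$. Hence
\[
 \tfrac{3r}{2}E^{\otimes2}-E^{\otimes3}
 =\tfrac{3r}{2}W^2-W^3+\beta_s m\,\tfrac{\C[J]}{|J|}\bigl(3r\cdot r-3r^2\bigr),
\]
and the reduced part vanishes for every $s$. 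The parity of $r$ is needed only so that the coefficient $3r/2$ is an integer. The fibre part is $\frac{3r}{2}W^2-W^3$. Since $W^k=r^{k-1}W$, this equals $\frac{3r^2}{2}W-r^2W=\frac{r^2}{2}W$, which is regular of rank $r^3/2$.

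The only delicate step is the sign of conjugation on $\beta_s$ in (i). I will justify it from the Chern character: $\operatorname{ch}(\overline x)$ has its degree-$2s$ component multiplied by $(-1)^s$, and $\tK^0(S^{2s})$ is torsion-free, so this determines $\overline{\beta_s}=(-1)^s\beta_s$. This explains why (i) holds for odd $s$ only: for even $s$ the cross term would become $2\beta_s\frac{r}{|J|}\varepsilon(a)\C[J]$, which need not vanish.
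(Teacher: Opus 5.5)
Your proof is correct and follows essentially the paper's argument: it uses the absorption identity $Wb=\dim(b)W$ (so $W^k=r^{k-1}W$), the sign $\overline{\beta_s}=(-1)^s\beta_s$, and expansion with $\beta_s^2=0$, after which the cross terms cancel exactly as you show. The only slip is a harmless sign in (i): the cross term is $\beta_s W(a-\bar a)$, not its negative, and it vanishes either way.
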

\begin{proof}
A regular representation satisfies $Wb=\dim(b)W$ for $b\in R(J)$ and $W^2=rW$. Complex conjugation gives $\overline{\beta_s}=(-1)^s\beta_s$ and $\overline W=W$. Thus
\[
 [E\otimes\overline E]
 =W^2+\beta_sW\big(a+(-1)^s\overline a\big).
\]
For odd $s$ the reduced coefficient vanishes because $\dim a=\dim\overline a$.

Since $\beta_s^2=0$, we also have
\[
 E^2=W^2+2\beta_sWa,\qquad
 E^3=W^3+3\beta_sW^2a.
\]
The reduced term of $P_r(E)$ is $3\beta_s(rW-W^2)a=0$. Its fibre is $(3r/2)W^2-W^3=(r^2/2)W$.
\end{proof}

For $s=1$ the first construction gives rank $L_o^2$ directly. When $L_o=L_e$, this equals the first-stage rank in Theorem~\ref{thm:main}. When the two common multiples differ, the Adams construction gives $L_oL_e$.

\subsection{An exterior-power lower bound}
Theorem~\ref{thm:nonlinearintro} concerns integral polynomials in the exterior powers of the given bundle $E$, with integer coefficients represented by trivial virtual bundles. The universal identities for exterior powers express every finite composition of sums, differences, products, and exterior powers in this form. The lower bound therefore applies to every such composition that satisfies the stated cancellation conditions.

\begin{proof}[Proof of Theorem~\ref{thm:nonlinearintro}]
Put $W=2+2\eps$, $\beta=\beta_2$, and $a=u+v\eps$. Every pair $(u,v)\in\Z^2$ occurs: rank-two complex bundles on $S^4$ realize every element of $\tK^0(S^4)$, since $\pi_3U(2)\to\pi_3U$ is an isomorphism. Choose one such bundle in each weight.

Let $x=[E]$ and $y=\lambda^2[E]$. The exterior powers on these boundary classes satisfy
\begin{equation}\label{eq:lambdas}
 \begin{split}
 x&=W+\beta(u+v\eps),\\
 y&=2+4\eps+2\beta(u+v)\eps,\\
 \lambda^3[E]&=x,\qquad \lambda^4[E]=1.
 \end{split}
\end{equation}
For the second formula use $2\lambda^2x=x^2-\psi^2x$, noting that $\psi^2(\eps)=1$ and $\psi^2(\beta)=4\beta$. The determinant of $E$ is trivial: its stabilizer character is $\eps^2=1$ and $H^2(S^4;\Z)=0$. Hence $\lambda^3E=E^*$. Duality fixes $\beta_2$ and both characters, so $[E^*]=[E]$. Exterior powers above four vanish. Every expression in the theorem therefore reduces to a polynomial $P(x,y)\in\Z[x,y]$ on these classes.

Evaluate characters at the identity and the nontrivial element of $\mu_2$. With $d=u+v$ and $h=u-v$, the two evaluations of $(x,y)$ are
\begin{equation}\label{eq:characterpairs}
 (4+\beta d,\ 6+2\beta d),\qquad
 (\beta h,\ -2-2\beta d).
\end{equation}
A virtual representation of $\mu_2$ is regular exactly when its character at the nontrivial element is zero. The fibre condition gives $P(0,-2)=0$. Vanishing of the reduced coefficient in the second pair, for all integers $u,v$, gives
\[
 P_x(0,-2)=P_y(0,-2)=0.
\]
Indeed, its coefficient is $hP_x(0,-2)-2dP_y(0,-2)$; setting $(u,v)=(1,0)$ and $(0,1)$ proves both equalities. The first pair in \eqref{eq:characterpairs} similarly gives
\[
 P_x(4,6)+2P_y(4,6)=0.
\]

Define $Q(\tau)=P(4\tau,8\tau-2)$. Since $P(x,z-2)$ has no terms of total degree zero or one,
\[
 Q(\tau)=\sum_{m\geq2}a_m\tau^m,\qquad 4^m\mid a_m.
\]
The equality $P_x(4,6)+2P_y(4,6)=0$ gives $Q'(1)=0$. The resulting virtual class has rank $r_{\mathrm{out}}=Q(1)$, so
\[
 2r_{\mathrm{out}}=\sum_{m\geq2}(2-m)a_m.
\]
The $m=2$ term vanishes; each term with $m\geq3$ is divisible by $64$. Therefore $32\mid r_{\mathrm{out}}$.

Finally, Proposition~\ref{prop:tensors} with $r=4$ gives $6E^2-E^3=8W$, of rank $32$. Thus this expression has the least positive rank allowed by Theorem~\ref{thm:nonlinearintro}. For the corresponding cellular step, take $A=6E^{\otimes2}$ and $D=E^{\otimes3}$. Their ranks are $96$ and $64$, and their multiplicity difference is $16$ in each weight. Lemmas~\ref{lem:embedding} and \ref{lem:cancellation} realize the quotient and extend it across the five- and six-dimensional cells.
\end{proof}

The same lower bound applies to symmetric-power, Schur, and gamma expressions that are integral polynomials in the exterior powers of $E$. Duals give the same classes, since $[E^*]=[E]$ on these boundaries.

\section{The stable regular index}\label{sec:ahss}

Throughout this section $X$ is a finite complex of finite groups, and $L_o,L_e$ are the least common multiples in \eqref{eq:invariants}. We use Pardon's $K$-theory of finite-rank bundles and pairs. Homotopy invariance, excision, and the long exact sequence are established in \cite[Corollary 6.4 and Propositions 6.13, 6.16, 6.21]{Pardon}. For finite complexes of groups and their subcomplexes, multiplication by the Bott element is an isomorphism by \cite[Proposition 6.24]{Pardon}. We use its periodic extension, which agrees with the bundle theory in nonpositive degrees by \cite[Corollary 6.26]{Pardon}.

The fibre representations of a bundle define a class on the left edge of the cellular spectral sequence. A compatible fibre family is realized by a virtual bundle precisely when it is a permanent cycle. Applied to the normalized regular family, this criterion gives an invariant of $X$ that can be smaller than the multiplier in Section~\ref{sec:construction}.

\begin{proposition}\label{prop:ss}
The skeletal filtration gives a convergent spectral sequence
\begin{equation}\label{eq:ss}
 E_1^{p,q}=K^{p+q}(X^p,X^{p-1})
 \cong\bigoplus_{\dim\sigma=p}K^q_{G_\sigma}(\mathrm{pt})
 \Longrightarrow K^{p+q}(X).
\end{equation}
Its even rows, with their $d_1$ differential, are the cellular cochain complex with coefficient system $\sigma\mapsto R(G_\sigma)$; the odd rows vanish.
\end{proposition}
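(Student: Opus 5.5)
We build the spectral sequence in the standard way from the exact couple of the skeletal filtration $X^0\subset X^1\subset\cdots\subset X^N=X$, using Pardon's relative $K$-groups and their periodic extension. Put
\[
D_1^{p,q}=K^{p+q}(X^p),\qquad E_1^{p,q}=K^{p+q}(X^p,X^{p-1}).
\]
The long exact sequences of the pairs $(X^p,X^{p-1})$ from \cite[Proposition 6.21]{Pardon} assemble into an exact couple, and hence give a spectral sequence.

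\emph{Convergence.} Since $X$ is finite, the filtration has finite length and $X^N=X$. Therefore $D_1^{p,q}=K^{p+q}(X)$ for $p\geq N$, and $K^*(X^{-1})=0$. The spectral sequence then converges strongly to $K^{*}(X)$, with the filtration given by the kernels of the restrictions to the skeleta. This uses only exactness and Bott periodicity, so the groups in each total degree are those of the periodic theory \cite[Proposition 6.24, Corollary 6.26]{Pardon}.

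\emph{Identification of $E_1$.} Excision \cite[Proposition 6.16]{Pardon} and the characteristic maps identify $K^*(X^p,X^{p-1})$ with the direct sum over the $p$-cells $\sigma$ of
\[
K^*\bigl((D^p,\partial D^p)\times BG_\sigma\bigr).
\]
These cells are disjoint in the interiors; the relative homeomorphism of Lemma~\ref{lem:clutch} does the work here, together with a collar argument to pass to open neighbourhoods. For a constant-group disk, homotopy invariance and equivariant isotypic decomposition give
\[
K^{n}\bigl((D^p,\partial D^p)\times BG\bigr)\cong \tK^{n}(S^p)\otimes R(G)\cong K^{n-p}_G(\mathrm{pt}).
\]
Equivariant $K$-theory of a point is $R(G)$ in even degrees and $0$ in odd degrees. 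This yields \eqref{eq:ss}, the vanishing of the odd rows, and $E_1^{p,q}\cong\bigoplus_\sigma R(G_\sigma)$ for $q$ even.

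\emph{The differential $d_1$.} By definition, $d_1$ is the composite
\[
K(X^p,X^{p-1})\to K(X^p)\to K(X^{p+1},X^p).
\]
On the summand for a face $\sigma$ of a $(p+1)$-cell $\tau$, the component of $d_1$ is the attaching-degree $\pm1$, coming from the incidence of $\sigma$ in $\partial\tau$, times the restriction $R(G_\sigma)\to R(G_\tau)$ along the injection $G_\tau\hookrightarrow G_\sigma$. That restriction is twisted by the conjugacies of the complex of groups, which act trivially on $R$ up to the specified identification. The proof reduces this to the case of a single simplex pair, where everything pulls back to $\partial D^{p+1}\times BG_\tau$ and the boundary map is the suspension isomorphism. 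Summing these components gives the cellular coboundary with coefficient system $\sigma\mapsto R(G_\sigma)$.

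The main obstacle is the $d_1$ identification. Signs and orientations must be checked for compatibility with Bott periodicity, and the twisting elements must be shown to induce exactly the conjugation identifications on representation rings. If that proves delicate, the fallback is to check the claim on the $0$- and $1$-skeleta, where $d_1$ is visibly restriction of fibre representations as in Lemma~\ref{lem:initial}. The general case would then follow by suspension, since periodicity commutes with the boundary maps.
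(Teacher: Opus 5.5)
Your proposal is correct and follows essentially the same route as the paper: the exact couple of the skeletal pairs, excision to the cell terms $K^{p+q}_{G_\sigma}(D^p,S^{p-1})$, periodicity to identify these with $K^q_{G_\sigma}(\mathrm{pt})$, $d_1$ as signed restriction (with the conjugacies acting trivially on representation rings), and convergence from the finite filtration. The only difference is cosmetic. You split off $R(G_\sigma)$ by isotypic decomposition and then apply nonequivariant Bott periodicity, whereas the paper cites equivariant Bott periodicity for the disk with trivial action directly.
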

\begin{proof}
The long exact sequences of consecutive skeletal pairs give the exact couple. Excision identifies a relative cell term with $K^{p+q}_{G_\sigma}(D^p,S^{p-1})$. Equivariant Bott periodicity \cite[Proposition~3.5]{Segal} and suspension identify it with $K^q_{G_\sigma}(\mathrm{pt})$. The boundary maps are the signed restriction maps on representations induced by the characteristic faces. The conjugacies in a complex of groups act trivially on representation rings, so these maps give a coefficient system. The finite filtration gives convergence with associated graded groups $E_\infty^{p,q}$.
\end{proof}

Characters identify the even cellular row with Pardon's $\chi$-integral cochains on the inertia \cite[Definition 2.7]{Pardon}. Thus the cochain complex occurs at $E_1$, and its cohomology at $E_2$. The normalized regular representations
\begin{equation}\label{eq:omega}
 \omega_x=\frac{L_o}{|G_x|}[\C[G_x]]
\end{equation}
are compatible under restriction by~\eqref{eq:restriction} and define a class in $E_2^{0,0}$.

\begin{theorem}[Integral annihilation at the edge]\label{thm:annihilation}
For every $s\geq1$,
\begin{equation}\label{eq:edgekill}
 g_s(L_e)d_{2s+1}=0\quad\text{on }E_{2s+1}^{0,0}.
\end{equation}
\end{theorem}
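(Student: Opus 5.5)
The plan is to make the Adams operations act on the whole spectral sequence and to compare two descriptions of the action of a suitable combination $\Theta$ on the differential $d_{2s+1}$ out of the left edge. Take $\Theta_s=\sum_i n_i\psi^{k_i}$ from Proposition~\ref{prop:least} with $L=L_e$. Each $\psi^k$ is an integral polynomial in exterior powers and is natural for representable maps of pairs. It therefore acts on Pardon's $K$-theory of the skeletal pairs and commutes with the maps of the exact couple, so it induces endomorphisms of every page $E_r^{p,q}$ that commute with every $d_r$. The first step is to verify this compatibility on the relative terms, via Pardon's pair theory and the long exact sequence. Since $\psi^k$ is not additive on nonzero-degree groups in general, I will work with the unreduced/pair classes in degree $0$ and use the Bott-periodic extension only to identify the targets.

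Next I will compute the action on the relevant $E_1$ terms. On $E_1^{0,0}=\bigoplus_v R(G_v)$, each $\psi^{k_i}$ with $k_i\equiv1\pmod{L_e}$ acts as the identity, by \eqref{eq:adams} and $\exp(G_v)\mid L_e$. Hence $\Theta_s$ acts as multiplication by $\sum_in_i=g_s(L_e)$ on $E_1^{0,0}$, and hence on its subquotient $E_{2s+1}^{0,0}$.

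The target of $d_{2s+1}$ from $E^{0,0}$ is $E_{2s+1}^{2s+1,-2s}$, a subquotient of $\bigoplus_{\dim\sigma=2s+1}K^{1}(D^{2s+1},S^{2s})\otimes R(G_\sigma)\cong\bigoplus\tK^0(S^{2s})\otimes R(G_\sigma)$. Under this identification the term is spanned by classes $\beta_s a$, and the Bott/suspension identification is compatible with Adams operations up to the weight $k^s$. Thus $\psi^k$ acts there as $\beta_s a\mapsto k^s\beta_s\psi^k(a)=k^s\beta_s a$, again because $k\equiv1\pmod{L_e}$. Consequently $\Theta_s$ acts on the target by $\sum_in_ik_i^s=0$. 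Naturality then gives, for $x\in E_{2s+1}^{0,0}$,
\[
 g_s(L_e)\,d_{2s+1}x=d_{2s+1}(\Theta_s x)=\Theta_s(d_{2s+1}x)=0,
\]
which is \eqref{eq:edgekill}.

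I expect the main obstacle to be the justification that $\psi^k$ acts on the relative term $E_1^{2s+1,-2s}$ with the eigenvalue $k^s$, and that this action descends compatibly through the pages. The delicate point is that the degree $p+q=1$ is odd. To handle it, I will first try to avoid odd degrees altogether. The differential $d_{2s+1}$ on the edge can be described concretely: lift a class in $E^{0,0}_{2s+1}$ to a genuine class on $X^{2s}$, which is possible because the earlier differentials vanish on it. Its restriction to the boundary of each $(2s+1)$-cell $S^{2s}\times BG_\sigma$ has reduced part $\beta_s a_\sigma$, and $d_{2s+1}x$ is represented by the cochain $\sigma\mapsto a_\sigma$. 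In this description Adams operations act on degree-$0$ groups of $S^{2s}\times BG_\sigma$, where \eqref{eq:adams} applies directly. The cancellation \eqref{eq:cancel} is exactly the statement that $\Theta_s$ applied to a lift kills every such boundary obstruction while multiplying the fibre by $g_s(L_e)$. Hence $\Theta_s$ of a lift of $x$ is a lift of $g_s(L_e)x$ that extends over $X^{2s+1}$, so $g_s(L_e)x$ survives to $E_{2s+2}$. This is a virtual-class version of the argument in Proposition~\ref{prop:induction}, and no positivity is needed.
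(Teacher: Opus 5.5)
Your last paragraph is the paper's proof. You lift $y\in E_{2s+1}^{0,0}$ to $x\in K^0(X^{2s})$ and apply $\Theta_s$ from Proposition~\ref{prop:least} with $L=L_e$. By \eqref{eq:adams} you get $\Theta_sx=g_s(L_e)W$ on every $S^{2s}\times BJ$, extend over $X^{2s+1}$, and conclude that $g_s(L_e)y$ survives.

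The one real difference is the extension step.
\begin{itemize}
\item The paper works with actual bundles. It adds $N[V]$ for a regular bundle $V$ on $X^{2s+1}$ and represents the sum by a bundle $A$ via Corollary~\ref{cor:positive}. It then makes the boundary multiplicity bundles of $A$ trivial by Lemma~\ref{lem:cancellation}, clutches, and subtracts $N[V]$. This is where positivity enters.
\item You instead use exactness of $K^0(X^{2s+1})\to K^0(X^{2s})\to K^1(X^{2s+1},X^{2s})$ together with excision, which identify the obstruction cellwise with the reduced boundary classes. This is legitimate, since Proposition~\ref{prop:ss} builds the spectral sequence from exactly these sequences, and it is shorter.
\item The paper's version stays inside its clutching lemmas and never has to identify the connecting map.
\end{itemize}
Proposition~\ref{prop:least} assumes $L\geq2$, so the case $L_e=1$ needs the separate trivial remark the paper makes.

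Do not present your first route as the proof. Integral Adams operations are unstable. They do not commute with the Bott identification that defines the connecting map from degree $0$ to degree $1$, so they do not give endomorphisms of the exact couple commuting with every $d_r$. If you work honestly in negative degrees, the edge class sits in $E^{0,-2m}$ with $m\geq1$, where $\psi^k$ has eigenvalue $k^m$. Naturality then yields $c\,d_{2s+1}=0$ only for $c$ in the ideal generated by the numbers $k^m(k^s-1)$ with $k\equiv1\pmod{L_e}$. That ideal can contain primes outside $L_e$: for $L_e=2$ and $s=2$, every such number is divisible by $3$. Transporting $\psi^k$ to $E_1^{2s+1,-2s}$ through $\delta$, as you suggest, is only an $E_1$-level identity at a single spot, and using it is exactly your concrete argument.
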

\begin{proof}
If $L_e=1$, every compatible fibre family is represented by a virtual trivial bundle on each component, so every left-edge differential vanishes. Assume $L_e\geq2$. There are no incoming differentials at the left edge. By the skeletal exact couple, $y\in E_{2s+1}^{0,0}$ lifts to a class $x\in K^0(X^{2s})$ with fibre family $y$. On the boundary of each characteristic $(2s+1)$-cell, equivariant Bott periodicity gives a decomposition
\[
 x|_{S^{2s}\times BJ}=W+\beta_sa,
\]
allowing $W$ and $a$ to be virtual representations. The operation $\Theta_s$ from Proposition~\ref{prop:least} gives $\Theta_sx=g_s(L_e)W$ on that boundary.

To extend $\Theta_sx$ over $X^{2s+1}$, choose a regular bundle $V$ on $X^{2s+1}$ by Theorem~\ref{thm:main}. There are finitely many cells and isotypic multiplicities. By adding $N[V]$ for sufficiently large $N$, we can make every fibre multiplicity of $\Theta_sx+N[V]$ positive and large enough for Corollary~\ref{cor:positive} on $X^{2s}$. Represent it by a bundle $A$. The added bundle extends across each characteristic disk, so its boundary reduced class is zero. The reduced boundary class of $A$ is consequently zero as well. Increase $N$, if necessary, to be in the cancellation range of Lemma~\ref{lem:cancellation}. The bundle $A$ then extends over $X^{2s+1}$. Subtracting the extended $N[V]$ gives an extension of $\Theta_sx$ in $K^0$.

Its fibre family is $g_s(L_e)y$, so $d_{2s+1}(g_s(L_e)y)=0$.
\end{proof}

\begin{definition}\label{def:index}
For finite connected $X$, the \emph{stable regular index} $I_K(X)$ is the positive generator of
\[
 \{a\in\Z:a\omega\in\operatorname{im}(K^0(X)\longrightarrow E_2^{0,0})\}.
\]
Equivalently, it is the least positive $a$ for which $a\omega$ is a permanent cycle. The subgroup is nonzero by Theorem~\ref{thm:main}.
\end{definition}

\begin{theorem}[Obstruction product and realization]\label{thm:index}
Let $X$ be connected with $\dim X\leq n$, put $a_0=1$, and define recursively
\[
 o_s=\operatorname{ord}\big(d_{2s+1}(a_{s-1}\omega)\big),
 \qquad a_s=a_{s-1}o_s,
 \qquad 1\leq s\leq t(n),
\]
where a zero class has order one. Then
\begin{equation}\label{eq:indexproduct}
 I_K(X)=\prod_{s=1}^{t(n)}o_s,\qquad o_s\mid g_s(L_e).
\end{equation}
Let $h_n=\max\{1,\ceil{(n-1)/2}\}$. Every positive multiple $a$ of $I_K(X)$ with $a\geq h_n$ is realized by an actual regular bundle of rank $L_oa$. In particular, if $I_K(X)\geq h_n$, the minimum regular rank is $L_oI_K(X)$.
\end{theorem}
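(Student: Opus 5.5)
The proof has three parts: the product formula, the divisibility $o_s\mid g_s(L_e)$, and the realization.

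\emph{Product formula.} I would work entirely on the left edge of the spectral sequence of Proposition~\ref{prop:ss}. Since the odd rows vanish, the only nonzero differentials leaving $E^{0,0}$ are the $d_{2s+1}$, landing in $E^{2s+1,-2s}$. The differential $d_{2s+1}$ is defined only on the subgroup $E_{2s+1}^{0,0}\subset E_2^{0,0}$, and nothing enters the left edge. Therefore a multiple $a\omega$ survives to $E_{2s+2}$ exactly when $a\omega$ lies in $E_{2s+1}$ and $d_{2s+1}(a\omega)=0$. The differentials are additive on the subgroup $E_{2s+1}^{0,0}\cap\Z\omega$, so I would argue by induction on $s$. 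The claim is that
\[
 \{a: a\omega\in E_{2s+2}^{0,0}\}=a_s\Z .
\]
For the inductive step, assume $a\omega$ survives to $E_{2s+1}$ iff $a_{s-1}\mid a$. Then $d_{2s+1}(a\omega)=(a/a_{s-1})\,d_{2s+1}(a_{s-1}\omega)$, and this vanishes iff $o_s\mid a/a_{s-1}$. Hence survival to $E_{2s+2}$ is equivalent to $a_s\mid a$.

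After $s=t(n)$ no further differential leaves $E^{0,0}$. The reason is that $d_r$ has target in column $r$, and columns beyond $n$ vanish. The highest possible target is $E^{2s+1,\ast}$ with $2s+1\leq n$, i.e.\ $s\leq t(n)$. Even-length differentials land in odd rows, which vanish. Hence the permanent cycles in $\Z\omega$ are $a_{t}\Z$. By convergence these are exactly the images of $K^0(X)$, and this gives $I_K(X)=\prod o_s$.

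\emph{Divisibility.} Theorem~\ref{thm:annihilation} gives $g_s(L_e)\,d_{2s+1}(a_{s-1}\omega)=0$, which is exactly $o_s\mid g_s(L_e)$.

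\emph{Realization.} Let $a$ be a multiple of $I_K(X)$. Then $a\omega$ is the fibre family of a virtual class $x=[A]-[D]\in K^0(X)$. On a point with stabilizer $H$, the multiplicity of an irreducible $\rho$ in $x$ is
\[
 u_\rho=\frac{L_oa}{|H|}\dim\rho\geq a .
\]
The idea is to apply Corollary~\ref{cor:positive}, which needs $2u_\rho\geq n-1$. The hypothesis $a\geq h_n$ gives $2a\geq 2\ceil{(n-1)/2}\geq n-1$ for $n\geq 2$; small $n$ is trivial, where only nonemptiness is needed. There is a subtlety: Corollary~\ref{cor:positive} is stated for $K^0_{\mathrm{vb}}$, whereas the spectral sequence uses Pardon's $K$-theory. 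I would use that on finite complexes of groups Pardon's $K^0$ in degree zero is the Grothendieck group of finite-rank bundles \cite[Corollary 6.26]{Pardon}, so $x$ is a genuine difference of bundles. The corollary then produces an actual bundle of rank $L_oa$ with fibres $a\omega_x$, which is regular.

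\emph{Minimality.} Any regular bundle has fibres $(\rk/|G_x|)\C[G_x]$. Since $L_o$ is an lcm of the $|G_x|$ and $X$ is connected, $L_o$ divides the rank, so the rank is $L_oa$ with $a\omega$ realized. Thus $I_K(X)\mid a$, and the minimum regular rank is $L_oI_K(X)$ when $I_K(X)\geq h_n$.

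I expect the main obstacle to be the bookkeeping in the spectral sequence identification. One must check that the multiples of $\omega$ surviving to $E_{2s+1}$ form a cyclic subgroup on which $d_{2s+1}$ is linear. One must also check that permanence of $a\omega$ coincides with being in the image of $K^0(X)\to E_2^{0,0}$. Both follow from the exact couple (the edge homomorphism) and the finiteness of the filtration, and I would spell this out carefully.
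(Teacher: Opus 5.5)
Your proposal is correct and follows the paper's proof essentially step by step. You use induction on the multiples of $\omega$ surviving at the left edge, Theorem~\ref{thm:annihilation} for $o_s\mid g_s(L_e)$, the vanishing of even-length and out-of-range differentials, Corollary~\ref{cor:positive} for realization, and the divisibility of regular ranks by $L_o$ for minimality; your remark identifying Pardon's $K^0$ with the bundle Grothendieck group via \cite[Corollary 6.26]{Pardon} is the same identification the paper invokes at the start of Section~\ref{sec:ahss}.
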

\begin{proof}
On page $E_{2s+1}$, the multiples of $\omega$ surviving at the left edge are exactly $a_{s-1}\Z\omega$. The restriction of $d_{2s+1}$ to this cyclic group has kernel $a_{s-1}o_s\Z\omega$ by the definition of $o_s$. Theorem~\ref{thm:annihilation} shows that the image of $a_{s-1}\omega$ has finite order dividing $g_s(L_e)$, and hence $o_s\mid g_s(L_e)$. Since the coefficient groups in odd $q$-degree vanish, differentials of even length vanish. Finally, the dimension of the filtration excludes every differential beyond those indexed by $1\leq s\leq t(n)$. The permanent multiples are therefore precisely $a_{t(n)}\Z\omega$, which proves \eqref{eq:indexproduct}.

A permanent family $a\omega$ lifts to $x=[A]-[D]\in K^0(X)$. In an irreducible $\rho$ of a stabilizer $H$, its multiplicity is
\[
 a\frac{L_o}{|H|}\dim\rho\geq a.
\]
If $a\geq h_n$, Corollary~\ref{cor:positive} represents $x$ by an actual regular bundle. Conversely, the rank of any regular bundle is a positive multiple of $L_o$, and its normalized multiplier lies in the subgroup of Definition~\ref{def:index}. Thus no smaller rank than $L_oI_K(X)$ is possible, and it is attained when $I_K(X)\geq h_n$.
\end{proof}

For every value of $I_K(X)$, the theorem gives the upper bound
\begin{equation}\label{eq:actualrank}
 L_o I_K(X)\max\left\{1,\ceil{h_n/I_K(X)}\right\}.
\end{equation}
The index calculation allows a choice of virtual class on each skeleton. Cancelling a prescribed bundle can require a larger multiplier.

\begin{remark}[Infinite complexes]
On a countable disjoint union of points, $K^0$ formed from bounded-rank bundles is the group of bounded integer sequences, a proper subgroup of $\prod_{m\geq1}\Z$. We use the cellular spectral sequence for finite complexes; Section~\ref{sec:construction} gives a direct bundle construction on locally finite complexes.
\end{remark}

\section{Gerbes and regular-rank bounds}\label{sec:gerbes}

\subsection{Enlargement for stabilizers of order two}
Recall the data of a simplicial complex of groups \cite[Definition 2.1]{Pardon}. Each simplex $\sigma$ of a simplicial complex $Z$ has a group $G_\sigma$. A face inclusion $\sigma\subset\tau$ gives an injection
\[
 \phi_{\sigma\tau}:G_\tau\longrightarrow G_\sigma.
\]
For a strict chain $\rho\subset\sigma\subset\tau$ there is an element $g_{\rho\sigma\tau}\in G_\rho$ comparing the direct injection with the composite injection by conjugation. These elements satisfy the coherence identity on chains of four simplices. Degenerate chains have normalized identity data.

\begin{proposition}[Gerbe enlargement]\label{prop:enlargement}
Let $X=\|(Z,G)\|$ have all its groups isomorphic to $1$ or $\mu_2$, and let $M=|Z|$. There is a banded $\mu_2$-gerbe $\widehat X\to M$ and a representable map
\begin{equation}\label{eq:enlargementmap}
 j:X\longrightarrow\widehat X
\end{equation}
inducing the identity on coarse spaces. Pullback along $j$ preserves regular bundles and their ranks.
\end{proposition}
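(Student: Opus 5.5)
We define $\widehat X$ directly as a complex of groups on the same simplicial complex $Z$, with every group replaced by $\mu_2$. We set $\widehat G_\sigma=\mu_2$ for every simplex and $\widehat\phi_{\sigma\tau}=\id$. For the twisting elements we take $\widehat g_{\rho\sigma\tau}=\iota_\rho(g_{\rho\sigma\tau})$, where $\iota_\rho:G_\rho\hookrightarrow\mu_2$ is the unique injection: the identity when $G_\rho\cong\mu_2$, the trivial map when $G_\rho=1$.

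The first step is to check that this is a complex of groups and that the $\iota_\sigma$ form a morphism of complexes of groups. Since $\mu_2$ has no nontrivial automorphisms, any two injections of a group of order $\leq2$ into $\mu_2$ agree. Hence
\[
\iota_\sigma\circ\phi_{\sigma\tau}=\iota_\tau=\widehat\phi_{\sigma\tau}\circ\iota_\tau .
\]
Conjugation in the abelian group $\mu_2$ is trivial, so the comparison of direct and composite injections holds for any choice of $\widehat g$. The coherence identity on four-chains holds because it is the image under the homomorphism $\iota_\rho$ of the coherence identity in $G_\rho$, with the $\phi$'s replaced by identities. Normalization is preserved. Thus $\widehat X=\|(Z,\widehat G)\|$ is an orbispace with every stabilizer $\mu_2$ and trivial conjugation action; that is, it is a banded $\mu_2$-gerbe over $|Z|=M$. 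The cell-wise maps $BG_\sigma\to B\mu_2$ glue to $j$, using the functoriality of realization in Pardon's framework (or the star charts of Lemma~\ref{lem:clutch}). Each $\iota_\sigma$ is injective, so $j$ is representable, and it is the identity on coarse spaces because $Z$ is unchanged.

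The last assertion follows from \eqref{eq:restriction}. Let $F$ be regular on $\widehat X$ of rank $r$, with fibres $(r/2)\C[\mu_2]$. At a point of $X$ with stabilizer $G_x$, the fibre of $j^*F$ is $\res^{\mu_2}_{G_x}$ of this fibre. That equals $(r/|G_x|)\C[G_x]$, since $\iota$ is injective, so rank and regularity are preserved.

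The main obstacle is rigor in identifying $\widehat X$ as a banded gerbe and in constructing $j$ on realizations. I would handle both locally. Over an open star $\operatorname{St}(\sigma)$, the chart of $\widehat X$ is $\operatorname{St}(\sigma)\times B\mu_2$, with trivial action, since all $\widehat\phi$ are identities. On the $X$ side the chart is the $G_\sigma$-space $\coprod G_\sigma\times_{G_\tau}(\tau\cap\operatorname{St}(\sigma))$, and $j$ maps it $\iota_\sigma$-equivariantly onto the star. The cocycle compatibility on overlaps reduces to the identities above, and the band is trivial because $\operatorname{Aut}(\mu_2)=1$.
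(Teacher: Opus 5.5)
Your proposal is correct and follows the paper's proof: the same enlarged complex of groups with $\widehat g_{\rho\sigma\tau}=\iota_\rho(g_{\rho\sigma\tau})$, the same morphism given by the $\iota_\sigma$, representability from injectivity on stabilizers, and regularity via restriction. The paper makes two points explicit that you leave implicit. With trivial comparison elements, the morphism's triple axiom is exactly your definition of $\widehat g$. The paper also records that $\widehat g$ is a two-cocycle on the barycentric subdivision, which identifies the gerbe class as $[c]\in H^2(M;\Z/2)$; the statement itself does not need this.
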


\begin{proof}
There is a unique injective homomorphism $\iota_\sigma:G_\sigma\to\mu_2$. Since $\operatorname{Aut}(\mu_2)=1$, these satisfy
\[
 \iota_\sigma\phi_{\sigma\tau}=\iota_\tau.
\]
Replace every cell group by $\mu_2$ and every face injection by the identity. On a strict triple use
\[
 c_{\rho\sigma\tau}=\iota_\rho(g_{\rho\sigma\tau})\in\mu_2.
\]
The conjugacy conditions are automatic in the abelian target. Identify $\mu_2$ with $\Z/2$ additively. The original coherence identity, transported by the compatible maps $\iota_\sigma$, becomes
\begin{equation}\label{eq:cocycle}
 c_{\rho\sigma\tau}-c_{\pi\sigma\tau}
 +c_{\pi\rho\tau}-c_{\pi\rho\sigma}=0
 \quad (\pi\subset\rho\subset\sigma\subset\tau).
\end{equation}
Thus $c$ is a simplicial two-cocycle on the barycentric subdivision of $Z$, and the new data define a complex of groups $\widehat G$.

In the open-star description of the realization, all the coset sets for $\widehat G$ are singletons. Consequently $\|(Z,\widehat G)\|$ is locally the product of an open star with $B\mu_2$. All maps on the band are the identity. It is the banded gerbe with class
\[
 \xi=[c]\in H^2(M;\Z/2).
\]
Using Haefliger's convention \cite[Section~2.5]{HaefligerExtensions}, define a morphism
over the identity of $Z$:
its homomorphism at $\sigma$ is $\iota_\sigma$, and its comparison element
on every face inclusion $\sigma\subset\tau$ is
$a_{\sigma\tau}=1\in\mu_2$.  The condition on a single face is
\[
 \operatorname{Ad}(a_{\sigma\tau})\circ\id_{\mu_2}\circ\iota_\tau
 =\iota_\sigma\circ\phi_{\sigma\tau},
\]
by the compatibility of the maps $\iota_\sigma$.  On a strict triple
$\rho\subset\sigma\subset\tau$, the second morphism axiom reduces, because
all the $a$'s and all target face maps are identities, to
\[
\iota_\rho(g_{\rho\sigma\tau})=c_{\rho\sigma\tau},
\]
by the definition of $c_{\rho\sigma\tau}$.  Thus the
data define a morphism of complexes of groups and hence induce
\eqref{eq:enlargementmap} on their realizations.

The map is representable. For example, on a star chart of $X$ with group $G_\sigma$, its pullback along an atlas of the target gerbe is the space induced from the $G_\sigma$-chart by the injection $\iota_\sigma$. Equivalently, the induced homomorphism on every stabilizer is injective. Finally, a regular $\mu_2$-representation restricts to a regular representation of either allowed subgroup, with unchanged vector-space dimension.
\end{proof}

\Needspace{10\baselineskip}
\begin{lemma}[Triangulation with full stabilizers]\label{rem:triangulation}
Let $X$ be a Hausdorff second-countable finite-dimensional smooth orbifold. Its coarse space has a locally finite triangulation $Z$, subordinate to the isotropy strata and with closed stars contained in chart footprints, together with a complex of finite groups $(Z,G)$ and an equivalence
\[
 \|(Z,G)\|\simeq X
\]
over the identity of $|Z|=|X|$. The group $G_\sigma$ is the full stabilizer on the interior of $\sigma$. If $|X|$ is compact, $Z$ is finite.
\end{lemma}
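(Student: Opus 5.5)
We would first triangulate the coarse space compatibly with the isotropy stratification, and then read the complex of groups off local charts. The coarse space $|X|$ of a smooth orbifold is locally a quotient $U/G_x$ of a linear representation, so it carries a natural Whitney stratification by orbit type: the image of the fixed-point submanifolds $U^H$, refined by conjugacy class of stabilizer. Locally $U/G_x$ is a semialgebraic set (invariant theory embeds it as a closed semialgebraic subset of some $\R^N$ via a Hilbert basis), and the strata are semialgebraic. The plan is to use the triangulation theorem for subanalytic/Whitney stratified spaces (Łojasiewicz, Goresky, Verona) to produce a locally finite triangulation $Z$ of $|X|$ in which every stratum is a union of open simplices. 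Second countability and finite dimensionality give local finiteness. Taking iterated barycentric subdivisions, and using a Lebesgue-number argument on a locally finite cover by chart footprints, we ensure that every closed star lies in the footprint of a single chart $U_\alpha/G_\alpha$. If $|X|$ is compact, finitely many simplices result.

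Next we build the complex of groups. For each simplex $\sigma$, choose a chart $(U_\sigma,G_{x_\sigma})$ whose footprint contains $\overline{\operatorname{St}}(\sigma)$, shrunk to be centred at a point of the interior of $\sigma$. The open simplex $\mathring\sigma$ lies in a single stratum and the star is contractible in the chart, so $\mathring\sigma$ lifts to $U_\sigma$. Fix such a lift $\tilde\sigma$. The pointwise stabilizer of $\tilde\sigma$ is constant along $\mathring\sigma$, because the stratum is of constant orbit type and $\mathring\sigma$ is connected. Define $G_\sigma$ to be this stabilizer, which is the full stabilizer on $\mathring\sigma$.

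For a face $\sigma\subset\tau$, points of $\tilde\tau$ near $\tilde\sigma$ have stabilizer contained in that of the limit point. Using the lift of $\tau$ adjacent to $\tilde\sigma$ inside the chart for $\sigma$, together with a chart embedding comparing the chart of $\tau$ with the one of $\sigma$, we obtain an injection $\phi_{\sigma\tau}:G_\tau\to G_\sigma$. This injection is well defined up to the choice of lift, that is, up to an element of $G_\sigma$. The twisting elements $g_{\rho\sigma\tau}$ measure the failure of the two lift choices to agree on triples, and the cocycle identity on four-chains follows from uniqueness of chart embeddings up to the group action (the pseudogroup property).

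Finally, we produce the equivalence $\|(Z,G)\|\simeq X$ over the identity by comparing open-star charts. In the chart over $\operatorname{St}(\sigma)$, the preimage of the star in $U_\sigma$ is $G_\sigma$-equivariantly the union $\bigcup_{\tau\supset\sigma}G_\sigma\times_{G_\tau}\tilde\tau$, because each open simplex lifts and its translates are exactly indexed by $G_\sigma/G_\tau$. This matches Pardon's local description of the realization, so we get local equivalences compatible on overlaps by the same coherence data, and descent glues them. When $G_x$ acts ineffectively, the kernel is contained in every $G_\sigma$ and causes no trouble.

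The main obstacle is the first step: obtaining a triangulation subordinate to the strata with the star condition, in a way that works uniformly and locally finitely on a noncompact orbifold. We would try first to reduce to the equivariant statement via the frame manifold $P$ with its $U(r)$-action (using a regular bundle from Theorem~\ref{thm:main}), and then invoke Illman's equivariant triangulation theorem for proper smooth actions of compact Lie groups. That theorem gives $|X|=P/U(r)$ a triangulation in which orbit types are constant on open simplices. Subdivision then yields the star condition.
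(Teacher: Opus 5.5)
Your triangulation step takes a different route from the paper, which applies the Moerdijk--Pronk good-triangulation theorem to the effective reduction using footprints of an atlas of $X$. Your Illman fallback through the frame manifold of a regular bundle is a workable substitute, and so is triangulating the Thom--Mather stratified coarse space. It is not circular, since Theorem~\ref{thm:main} does not use this lemma. On a noncompact space, however, you need a subdivision whose depth varies from region to region, not one global Lebesgue-number argument.

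\textbf{The gap: ineffective orbifolds.} The real gap is your construction of $\phi_{\sigma\tau}$ and $g_{\rho\sigma\tau}$ when $X$ is ineffective, which the statement allows. Chart embeddings, viewed as maps $V_\tau\to V_\sigma$, are unique only up to $G_\sigma$. The element relating two of them is determined only modulo the kernel of the chart action. So map-level data determine neither the injections nor the twisting elements, and the coherence identity cannot be deduced from the pseudogroup property.

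Your remark that the kernel ``causes no trouble'' is exactly where the argument breaks. Take the nontrivial banded gerbe $\G_2\to S^2$ of Section~\ref{sec:examples}. Every chart group is $\mu_2$ acting trivially, and every chart embedding is an inclusion of open sets. The data you read off therefore coincide with those of $S^2\times B\mu_2$. The twisting elements carry the gerbe class, so they must be extracted from the arrows of $X$, not from maps of charts.

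The paper does this with the principal $G_\sigma$-bundle $V_\tau\times_X V_\sigma\to V_\tau$, on which even trivially acting elements act freely:
\begin{itemize}
\item a section exists because $V_\tau$ is contractible;
\item transporting the $G_\tau$-action through the section gives $\phi_{\sigma\tau}$;
\item comparing composite and direct sections gives $g_{\rho\sigma\tau}$;
\item comparing threefold composites gives the coherence identity.
\end{itemize}

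\textbf{A smaller point: lifting.} Contractibility of the star does not make anything lift; the contractible cone $\C/\mu_2$ has no continuous section. The open simplex $\mathring\sigma$ lifts for a different reason: it lies in a single orbit-type stratum, over which the quotient map is a covering.

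Your decomposition of the preimage of $\operatorname{St}(\sigma)$ as $\bigcup_{\tau\supset\sigma}G_\sigma\times_{G_\tau}\tilde\tau$ needs two further facts. First, each lifted open simplex must extend to a lift of its closed simplex. Second, the chart piece over the star must contain exactly one lift of a chosen point of $\mathring\sigma$.

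Shrinking a chart so that it is centred in $\mathring\sigma$ can lose the star from its footprint. Instead, take the component of the preimage of the open star that contains a chosen lift, together with its setwise stabilizer. These are the Moerdijk--Pronk facts (Lemma~1.2.4 and Corollary~1.2.5) cited in the paper. They can also be proved directly from slice neighbourhoods at boundary points and the connectedness of small convex neighbourhoods in a simplex.
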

\begin{proof}
Apply the good-triangulation theorem of Moerdijk--Pronk to the effective reduction of $X$, using the footprints of an atlas of the original orbifold: \cite[arXiv version, Proposition~1.2.3 and Lemmas~1.2.2, 1.2.4]{MP}. It gives a locally finite triangulation subordinate to the isotropy strata, with closed stars contained in chart footprints. Its lifts form equivariant triangulations in the charts. In each original chart, the full stabilizers are the inverse images of the effective stabilizers, so the same triangulation is subordinate to the full isotropy strata.

Write $U_\sigma=\operatorname{St}(\sigma)$. Choose a point $x_\sigma$ in the interior of $\sigma$, a lift $\widetilde x_\sigma$, and the component $V_\sigma$ over $U_\sigma$ containing that lift. By \cite[arXiv version, Lemma~1.2.4 and Corollary~1.2.5]{MP}, $V_\sigma$ is a contractible open star containing exactly one lift of $x_\sigma$. Its setwise stabilizer is therefore the full stabilizer $G_\sigma$ of $\widetilde x_\sigma$, and
\[
 [V_\sigma/G_\sigma]\simeq X|_{U_\sigma}.
\]
Fix these equivalences and the resulting atlas maps $a_\sigma:V_\sigma\to X$.

For $\sigma\subset\tau$, the inclusion $U_\tau\subset U_\sigma$ makes
\[
 P_{\sigma\tau}=V_\tau\times_X V_\sigma\longrightarrow V_\tau
\]
a principal $G_\sigma$-bundle. It is trivial because $V_\tau$ is contractible. Choose a section, consisting of a lift $f_{\sigma\tau}:V_\tau\to V_\sigma$ and an arrow $a_\sigma f_{\sigma\tau}\Rightarrow a_\tau$ in $X$. For each $h\in G_\tau$, transporting its action through this section gives a continuous function $V_\tau\to G_\sigma$. Connectedness makes this function constant; composition of actions then gives a homomorphism $\phi_{\sigma\tau}:G_\tau\to G_\sigma$. At the point $\widetilde x_\tau$, this is the homomorphism on stabilizers induced by the inclusion $X|_{U_\tau}\hookrightarrow X|_{U_\sigma}$, so it is injective. Choose the identity section when $\sigma=\tau$.

For $\rho\subset\sigma\subset\tau$, the composite of the chosen sections and the direct section are two sections of the same principal $G_\rho$-bundle over $V_\tau$. They differ by a unique continuous function $V_\tau\to G_\rho$, hence by a constant element $g_{\rho\sigma\tau}$. The comparison uses the full groupoid arrows: the action on this principal bundle is free even for elements acting trivially on the chart. With the convention that the composite section is $g_{\rho\sigma\tau}$ times the direct section, we obtain
\[
 \phi_{\rho\sigma}\phi_{\sigma\tau}
 =\operatorname{Ad}(g_{\rho\sigma\tau})\phi_{\rho\tau}.
\]
Comparing the two ways to compose three chosen sections gives
\[
 \phi_{\pi\rho}(g_{\rho\sigma\tau})\,g_{\pi\rho\tau}
 =g_{\pi\rho\sigma}\,g_{\pi\sigma\tau}.
\]
Indeed, both sides compare the same composite arrow with the direct section. These are the complex-of-groups identities.

Over $U_\sigma$, the lifted interior of each $\tau\supseteq\sigma$ has $G_\sigma$-translates indexed by $G_\sigma/\phi_{\sigma\tau}(G_\tau)$. The chosen sections identify their face maps, and the elements $g_{\rho\sigma\tau}$ give the comparisons between successive face maps. Thus the pieces of the lifted triangulation lying over $U_\sigma$ have exactly the coset and gluing description of the open-star chart of $\|(Z,G)\|$ in \cite[Definition~2.1 and the following paragraph]{Pardon}. Their quotient is $[V_\sigma/G_\sigma]$. The chosen arrows identify these local equivalences on overlaps, and the displayed coherence identity makes them compatible on triple overlaps. They descend to the asserted equivalence over $|Z|$. Finally, a locally finite triangulation of a compact space has finitely many simplices.
\end{proof}

For stabilizers of order at most two, Proposition~\ref{prop:enlargement} therefore gives a gerbe over the coarse space of a smooth orbifold. If $X$ is compact, this triangulation of its coarse space is a finite complex, so Section~\ref{subsec:coarsefive} applies.

\subsection{Weights and realization}
Let $M$ be a finite connected CW complex and let $\pi:\G\to M$ be a banded $\mu_e$-gerbe, with $e\geq2$. Its class is $\xi\in H^2(M;\Z/e)$, and write
\[
 \alpha=\beta_e(\xi)\in H^3(M;\Z),
\]
where $\beta_e$ is the integral Bockstein. We identify $\mu_e$ with the $e$th roots of unity and choose the sign convention for twisted bundles so that weight one corresponds to $\alpha$.

The band fixes the characters of the stabilizer globally. The fibrewise idempotents of $\C[\mu_e]$ therefore split every bundle as
\begin{equation}\label{eq:weights}
 E=\bigoplus_{j=0}^{e-1}E_j,
\end{equation}
where $\mu_e$ acts on $E_j$ by its $j$th character. In local trivializations of the gerbe, transition maps for $E_j$ have triple product equal to the $j$th power of the gerbe cocycle. Thus weight-$j$ bundles are exactly finite-rank $j\alpha$-twisted bundles on $M$. For the gerbe-module formulation, see \cite{LU} and \cite[Definition~5.2 and Lemma~5.3]{TXLG}. In particular, taking determinants shows that a weight-$j$ bundle of rank $k$ satisfies
\begin{equation}\label{eq:detobstruction}
 kj\alpha=0.
\end{equation}

For a torsion class $\gamma$, write $\ind(\gamma)$ for the gcd of the ranks of finite-rank $\gamma$-twisted bundles. On a finite CW complex this equals the positive generator of the image of the rank map $K^0(M;\gamma)\to\Z$; see \cite[Section 2.5, Proposition 2.21 and Lemmas 2.22--2.23]{AW2014}. We set $\ind(0)=1$. The gcd is realized by a virtual twisted bundle. The proof of Proposition~\ref{prop:gerbeindex} gives a sufficient rank condition for realization by an actual bundle.

For a gerbe over an arbitrary finite connected CW complex, define $I_K(\G)$ to be the least positive integer $a$ for which $a\C[\mu_e]$ is the fibre representation of a virtual bundle at every point. When $\G$ has a finite complex-of-groups presentation, the edge homomorphism of Proposition~\ref{prop:ss} identifies this definition with Definition~\ref{def:index}.

\begin{proposition}\label{prop:gerbeindex}
For the gerbe $\G$,
\begin{equation}\label{eq:gerbeindex}
 I_K(\G)=\lcm_{0\leq j<e}\ind(j\alpha).
\end{equation}
If a positive integer $k$ is divisible by each index on the right and $2k\geq n-1$, where $\dim M\leq n$, then $\G$ has a regular bundle of rank $ek$.
\end{proposition}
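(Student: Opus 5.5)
The plan is to use the weight decomposition \eqref{eq:weights} at the level of $K$-theory. Every bundle on $\G$ splits canonically into weight pieces, and so does every virtual bundle. Hence there is a splitting
\[
 K^0(\G)\cong\bigoplus_{j=0}^{e-1}K^0(M;j\alpha),
\]
compatible with fibre representations. At every point, the fibre representation of a virtual bundle $x=\sum_j x_j$ is $\sum_j \rk(x_j)\chi_j$, where $\chi_j$ is the $j$th character of $\mu_e$. I will justify this splitting by applying the fibrewise idempotents to actual bundles $A$ and $D$ representing $x=[A]-[D]$. Because $M$ is connected and the band is fixed globally, each $\rk(x_j)$ is a constant integer.

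For the equality \eqref{eq:gerbeindex}, note that $a\C[\mu_e]=\sum_j a\chi_j$. The fibre family $a\C[\mu_e]$ is therefore realized by a virtual bundle exactly when, for each $j$, some virtual $j\alpha$-twisted class has rank $a$. By the cited identification of $\ind(j\alpha)$ with the positive generator of the image of the rank map $K^0(M;j\alpha)\to\Z$, this holds exactly when $\ind(j\alpha)\mid a$ for every $j$. The least such positive $a$ is the lcm, which proves \eqref{eq:gerbeindex}. The weight $j=0$ contributes $\ind(0)=1$.

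For the realization statement, the gcd $\ind(j\alpha)$ is attained by a virtual twisted class. Since $\ind(j\alpha)\mid k$, taking an integer multiple gives a virtual $j\alpha$-twisted class $x_j=[A_j]-[D_j]$ of rank $k$ for each $j$. I then want to represent each $x_j$ by an actual twisted bundle of rank $k$, and this is the main step. I plan to use the Stiefel argument of Lemma~\ref{lem:embedding} and Corollary~\ref{cor:positive}, carried out weight by weight on $M$ or, equivalently, on a finite complex-of-groups presentation of $\G$ if one is available. First add to both $A_j$ and $D_j$ a large weight-$j$ bundle, say a large multiple of the weight-$j$ summand of a regular bundle from Theorem~\ref{thm:main}, so that the construction takes place among actual bundles. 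Then embed $D_j$ into $A_j$ cell by cell. The obstruction to extending the monomorphism over a $q$-cell lies in $\pi_{q-1}$ of the Stiefel manifold $V_{d}(\C^{d+k})$, which is $2k$-connected. It therefore vanishes for every $q\le 2k+1$, and the hypothesis $2k\ge n-1$ covers all cells of dimension at most $n$.

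The argument is cleanest on a finite CW presentation. If $M$ is only a finite CW complex, I expect the same cellular obstruction theory to apply directly to twisted bundles over $M$, since these are sections of a bundle of Stiefel manifolds associated with a projective bundle. The quotient $F_j=A_j/D_j$ is then an actual weight-$j$ bundle of rank $k$. The sum $E=\bigoplus_j F_j$ has fibre $\sum_j k\chi_j=k\C[\mu_e]$, so it is regular of rank $ek$. The delicate point is that twisted bundles over a non-simplicial CW base do not directly fit Lemma~\ref{lem:embedding}. I would handle this either by noting that its proof uses only the connectivity of Stiefel fibres over cells, or by replacing $M$ with a homotopy-equivalent finite simplicial complex and using homotopy invariance of gerbes and of twisted $K$-theory.
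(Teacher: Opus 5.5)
Your proposal is correct and follows the paper's proof. The weight decomposition reduces both the index formula and the realization to each weight separately, and the Stiefel-connectivity argument of Lemma~\ref{lem:embedding} represents each rank-$k$ virtual twisted class by an actual bundle when $2k\geq n-1$; it applies cell by cell over $M$ because the twisting cancels in the Hom bundle. Padding $A_j$ and $D_j$ with a large bundle from Theorem~\ref{thm:main} is harmless but unnecessary, since the connectivity $2(a-d)=2k$ depends only on the rank difference.
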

\begin{proof}
A virtual bundle is regular of rank $ek$ precisely when all its weight summands have rank $k$. By \eqref{eq:weights}, such virtual classes exist precisely when each $\ind(j\alpha)$ divides $k$. This proves \eqref{eq:gerbeindex}.

The proof of Lemma~\ref{lem:embedding} also applies to twisted bundles on $M$. Pullback to a characteristic disk trivializes the gerbe. A monomorphism between two bundles of the same weight then has the ordinary complex Stiefel manifold as fibre; the scalar twisting cancels in the Hom bundle. Relative extension and cellular clutching therefore prove Lemma~\ref{lem:embedding} for each weight, with dimension that of $M$. A virtual weight-$j$ class of rank $k$ is represented by an actual twisted bundle when $2k\geq n-1$. Taking their direct sum proves the assertion.
\end{proof}

\subsection{The period--index bound}
We use the following theorem of Antieau--Williams \cite[Theorem A]{AW2021}: if a finite CW complex has dimension at most $2d$ and a torsion Brauer class $\gamma$ has period $m$, then
\begin{equation}\label{eq:awbound}
 \ind(\gamma)\mid m^{d-1}\prod_{p\mid m}p^{v_p((d-1)!)}.
\end{equation}

\begin{theorem}[Regular ranks on cyclic gerbes]\label{thm:gerbebound}
If $\dim M\leq n$, the gerbe $\G$ admits an actual regular bundle of rank
\begin{equation}\label{eq:gerbebound}
 e^{t+1}(t!)_e,\qquad t=t(n).
\end{equation}
\end{theorem}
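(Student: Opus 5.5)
The plan is to apply Proposition~\ref{prop:gerbeindex} with $k=e^t(t!)_e$. The regular rank is then $ek=e^{t+1}(t!)_e$. Two things have to be checked: that each $\ind(j\alpha)$ divides $k$, and that $2k\geq n-1$.

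\emph{Divisibility.} Fix $0\leq j<e$. The class $j\xi$ is $e$-torsion, so $j\alpha=\beta_e(j\xi)$ satisfies $e\,j\alpha=0$, and its period $m$ divides $e$. For $j=0$ we have $\ind(0)=1$ and there is nothing to prove. Otherwise, enlarge $n$ if necessary to the even number $2d$ with $d=\lceil n/2\rceil$. Then $d-1=\floor{(n-1)/2}=t$ when $n\geq1$, which one checks separately for $n$ even and $n$ odd. The bound \eqref{eq:awbound} of Antieau--Williams gives
\[
 \ind(j\alpha)\mid m^{t}\prod_{p\mid m}p^{v_p(t!)}.
\]
Since $m\mid e$, we have $m^t\mid e^t$. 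Every prime dividing $m$ also divides $e$, so $\prod_{p\mid m}p^{v_p(t!)}$ divides $(t!)_e$. Hence $\ind(j\alpha)\mid k$ for every $j$, and so the least common multiple in \eqref{eq:gerbeindex} divides $k$.

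\emph{Positivity range.} We need $2k\geq n-1$. Since $e\geq2$, we have $2k\geq 2\cdot 2^t$. Moreover $n-1\leq 2t+1$, because $t=\floor{(n-1)/2}$ when $n\geq1$. The inequality $2^{t+1}\geq 2t+1$ holds for all $t\geq0$, so the condition is satisfied. The small cases need separate checks: for $n\leq 2$ we have $t=0$ and $k=1$, and then $2\geq n-1$ holds.

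\emph{Main obstacle.} The key point is matching the dimension hypothesis of \eqref{eq:awbound} to $t(n)$. Antieau--Williams state their bound for complexes of dimension at most $2d$, so a complex of odd dimension $n=2d-1$ has to be treated as having dimension at most $2d$. It then remains to confirm that $d-1$ agrees with $t=\max\{0,\floor{(n-1)/2}\}$ in every case, including $n=0,1$, where the index is trivially $1$. With this in place, Proposition~\ref{prop:gerbeindex} yields the actual regular bundle of rank $e^{t+1}(t!)_e$.
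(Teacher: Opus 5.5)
Your proposal is correct and follows the paper's proof: set $k=e^t(t!)_e$, apply the Antieau--Williams bound \eqref{eq:awbound} with $d=t+1$ (your $d=\ceil{n/2}$ for $n\geq1$) to each $j\alpha$, whose period divides $e$, check $2k\geq n-1$, and invoke Proposition~\ref{prop:gerbeindex}. Your separate handling of the smallest dimensions, where $\alpha=0$, matches the paper's remark on the cases $n=0,1$.
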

\begin{proof}
Set $D=e^t(t!)_e$. The period of each $j\alpha$ divides $e$. Apply \eqref{eq:awbound} with $d=t+1$, so $n\leq2d$; the cases $n=0,1$ have $\alpha=0$ and the same conclusion. At each prime the bound for a divisor of $e$ divides the bound for $e$, hence $\ind(j\alpha)\mid D$ for every $j$.

Moreover, $D\geq2^t\geq t+1\geq\max\{1,\ceil{(n-1)/2}\}$. Proposition~\ref{prop:gerbeindex} supplies an actual bundle of rank $D$ in every weight. Their direct sum is regular of rank $eD$.
\end{proof}

For the universal class of period $p^a$, Antieau--Williams show that $p^{a+v_p(s)}$ annihilates the image of $d_{2s+1}$ on the rank term \cite{AW2021}. Formula~\eqref{eq:valuations} gives $g_s(p^a)=p^{a+v_p(s)}$ for odd $p$ and for $p=2$ with $a\geq2$. For $p=2$ and $a=1$, an extra factor of two occurs precisely when $s$ is even. Thus, for $2\parallel e$, the gerbe bound removes exactly the factor $2^{\floor{t/2}}$ in the universal operation construction. At order two, gerbe enlargement extends this improvement to all the orbispaces in Theorem~\ref{thm:order_two}.

\subsection{The order-two bound}
\begin{proof}[Proof of the upper bound in Theorem~\ref{thm:order_two}]
First let $X=\|(Z,G)\|$ be a possibly infinite complex of groups of dimension at most $n$, with groups $1$ and $\mu_2$. Proposition~\ref{prop:enlargement} gives a representable map to a banded $\mu_2$-gerbe over $|Z|$. Its class is represented by a map
\[
 |Z|\longrightarrow K(\Z/2,2).
\]
Choose a model of this Eilenberg--Mac Lane space with finitely many cells in every dimension. For example, the simplicial abelian group associated to $\Z/2$ in chain degree two has finitely many simplices in every degree. Its $n$-skeleton $K_n$ is a finite CW complex, so the finite-base index theorem applies there even when $Z$ has infinitely many cells. By cellular approximation the classifying map factors, up to homotopy, through $K_n$.

Theorem~\ref{thm:gerbebound} supplies a regular bundle of rank $2^{t+1+v_2(t!)}$ on the universal gerbe restricted to $K_n$. Pull it back to the enlarged gerbe and then to $X$.

For a general paracompact orbispace, apply Pardon's representable approximation \cite[Proposition 4.9]{Pardon} to obtain a map to a complex of groups of dimension at most $n$ and order at most two. Pull back the bundle just constructed. For a smooth orbifold, the smoothing and frame argument used in Theorem~\ref{thm:main} applies with the same rank.
\end{proof}

Thus
\[
 \frac{\Rsharp(n;2,2)}{R_2(n)}=2^{\floor{t/2}}.
\]
Legendre's formula also gives $R_2(n)=2^{2t+1-s_2(t)}$, where $s_2(t)$ counts the ones in the binary expansion of $t$.

\begin{table}[ht]
\centering
\caption{Uniform regular ranks $R_2(n)$ in dimension at most $n$ for stabilizers of order at most two (Theorem~\ref{thm:order_two}).}\label{tab:order_two}
\begin{tabular}{@{}crrrrrr@{}}
\toprule
$n$ & $0$--$2$ & $3$--$4$ & $5$--$6$ & $7$--$8$ & $9$--$10$ & $11$--$12$\\
\midrule
$R_2(n)$ & $2$ & $4$ & $16$ & $32$ & $256$ & $512$\\
\bottomrule
\end{tabular}
\end{table}

\subsection{Sharpness in dimension five}
For a class $\alpha$ of period two on a finite complex of dimension at most six, the twisted Atiyah--Hirzebruch spectral sequence has $d_3(1)=\alpha$ up to sign, and
\begin{equation}\label{eq:awfive}
 \ind(\alpha)=2\operatorname{ord}(d_5(2)),\qquad
 \operatorname{ord}(d_5(2))\mid4.
\end{equation}
The class $d_5(2)$ lies in $H^5(M;\Z)/(\alpha\smile H^2(M;\Z))$. The differential and its relation to the index are computed in \cite[Definition 2.4, Theorems 2.5 and 5.2, and Theorem A]{AW6}. Equation~\eqref{eq:awfive} computes the permanent rank subgroup in the twisted spectral sequence. By Proposition~\ref{prop:gerbeindex}, this index determines the virtual regular ranks of the gerbe.

\begin{theorem}[Sharp five-dimensional gerbe bound]\label{thm:five}
Among banded $\mu_2$-gerbes over finite connected CW complexes of dimension at most five, the largest possible minimum regular rank is $16$. More precisely, every such gerbe has minimum regular rank $2\ind(\alpha)$, and the value $16$ occurs on a finite five-complex.
\end{theorem}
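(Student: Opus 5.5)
Every banded $\mu_2$-gerbe $\G\to M$ with $\dim M\leq5$ has minimum regular rank $2\ind(\alpha)$; we prove this in three steps and then construct an example attaining $16$.

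\emph{Lower bound.} A regular bundle of rank $r$ splits by \eqref{eq:weights} into a weight-$0$ summand and a weight-$1$ summand, each of rank $r/2$. The weight-$1$ summand is an $\alpha$-twisted bundle, so $\ind(\alpha)\mid r/2$. Hence every regular rank is a multiple of $2\ind(\alpha)$. This is also the content of \eqref{eq:gerbeindex}, since $\ind(0\cdot\alpha)=1$.

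\emph{Realization.} Apply Proposition~\ref{prop:gerbeindex} with $k=\ind(\alpha)$ and $n=5$. The only requirement is $2k\geq4$, that is, $k\geq2$. If $\alpha\neq0$ then $\ind(\alpha)\geq2$, because a twisted line bundle forces $\alpha=0$ by \eqref{eq:detobstruction}. In that case rank $2\ind(\alpha)$ is realized.

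If $\alpha=0$, the weight-$1$ bundles are ordinary bundles twisted by a fixed torsion line datum. More simply, the gerbe is then $M\times B\mu_2$ twisted by a $\mu_2$-torsor class, which has a twisted line bundle. Taking the trivial line in weight $0$ and that twisted line bundle in weight $1$ gives a regular bundle of rank $2$. I will check this directly: $\alpha=\beta_2(\xi)=0$ means $\xi$ lifts to an integral class, and this produces an actual weight-$1$ line bundle. So the minimum regular rank is exactly $2\ind(\alpha)$ in all cases.

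\emph{Upper bound.} By \eqref{eq:awfive}, $\ind(\alpha)=2\operatorname{ord}(d_5(2))\leq8$ whenever $\alpha\neq0$. Together with the previous step this gives minimum regular rank at most $16$, consistent with $R_2(5)=16$ from Theorem~\ref{thm:gerbebound}.

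\emph{Sharpness (the main obstacle).} We need a finite $5$-complex $M$ and a period-two class $\alpha$ with $d_5(2)$ of order exactly $4$ in $H^5(M;\Z)/(\alpha\smile H^2(M;\Z))$. The first attempt is to take the $5$-skeleton of a finite model of $K(\Z/2,2)$ with its universal class. On the $5$-skeleton, $H^5$ becomes a free group surjecting onto the cells' cocycles, so the indeterminacy from $\alpha\smile H^2$ vanishes because $H^2$ of that skeleton is zero. Antieau--Williams' computation in \cite{AW6} identifies $d_5(2)$ with a class that, on the $5$-skeleton of $K(\Z/2,2)$, has order $4$ (via the relation with $\mathrm{Sq}^2$ and the integral lift in $H^5(K(\Z/2,2);\Z)\cong\Z/4$).

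The delicate point is that passing to the skeleton keeps the order $4$ and does not lower it by adding relations. I will verify this using the injectivity of $H^5(K(\Z/2,2);\Z)\to H^5(K_5;\Z)$ and the naturality of $d_5$. Should this fail, the fallback is to cite the explicit index-eight period-two example on a finite $5$-complex from \cite{AW6}. Either way $\ind(\alpha)=8$, so the minimum regular rank is $16$.
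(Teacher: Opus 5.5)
Your proposal is correct and follows the paper's proof step for step: the weight splitting gives divisibility by $2\ind(\alpha)$, Proposition~\ref{prop:gerbeindex} with $k=\ind(\alpha)\geq2$ realizes that rank (rank two when $\alpha=0$), \eqref{eq:awfive} caps it at $16$, and sharpness comes from restricting the universal class to the five-skeleton of $K(\Z/2,2)$, where $H^2=0$ removes the indeterminacy and injectivity of restriction on $H^5$ together with naturality of $d_5$ preserves order four. Delete two side remarks: $H^5$ of the five-skeleton is not free, since the injectivity you invoke embeds $H^5(K(\Z/2,2);\Z)\cong\Z/4$ in it and a free group could carry no class of order four; and for $\alpha=0$ the gerbe is the root gerbe of a line bundle lifting $\xi$, not $M\times B\mu_2$ twisted by a torsor, so rely on your direct integral-lift argument instead.
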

\begin{proof}
If $\alpha=0$, a weight-one line bundle exists by the twisted-bundle description, so rank two is attained. If $\alpha\ne0$, its index is an even integer at least two. Equation \eqref{eq:gerbeindex} gives $I_K(\G)=\ind(\alpha)$, and the inequality $2\ind(\alpha)\geq4\geq\dim M-1$ permits actual realization. Conversely the rank of the odd summand of a regular bundle is divisible by $\ind(\alpha)$. Thus the minimum is $2\ind(\alpha)$. Equation \eqref{eq:awfive} gives the upper bound $16$.

To attain the bound, take $X=K(\Z/2,2)^{(6)}$ in a finite-type model and its universal period-two class $\alpha_X$. This is the example in \cite[Section 5, proof of Theorem B]{AW6}; the proof of Theorem 5.2 there computes $d_5(2)$ as a generator of $H^5(K(\Z/2,2);\Z)\cong\Z/4$, so $\ind(\alpha_X)=8$. Here $H^2(X;\Z)=0$. Put $Y=X^{(5)}$. The relative cellular cochain complex $C^*(X,Y;\Z)$ is concentrated in degree six. The long exact sequence of the pair therefore shows that restriction is an isomorphism in degrees two and three and is injective in degree five. Restriction also induces an injection
\[
 \frac{H^5(X;\Z)}{\alpha_X\smile H^2(X;\Z)}
 \longrightarrow
 \frac{H^5(Y;\Z)}{\alpha_Y\smile H^2(Y;\Z)}.
\]
Indeed, every element of $H^2(Y;\Z)$ lifts to $X$, and injectivity in degree five applies after subtracting its cup product with $\alpha_X$. The restricted class also has period two, since restriction is an isomorphism in degree three. Naturality therefore sends $d_5^{\alpha_X}(2)$ to $d_5^{\alpha_Y}(2)$, and injectivity on the quotient preserves its order four. Thus $\alpha_Y$ has period two and index eight by \eqref{eq:awfive}. Choose $\xi_Y\in H^2(Y;\Z/2)$ with $\beta_2(\xi_Y)=\alpha_Y$, possible by the Bockstein exact sequence. The associated gerbe has minimum regular rank $16$.
\end{proof}

Theorem~\ref{thm:nonlinearintro} concerns cancellation of prescribed boundary data. In Theorem~\ref{thm:order_two}, the choice of virtual classes on an enlarged gerbe gives the smaller rank $16$. Section~\ref{sec:varying} proves that this bound is sharp even with a dense free locus.

\begin{proposition}[Closed five-dimensional bases]\label{prop:closedfive}
Let $\G$ be a banded $\mu_2$-gerbe over a connected closed smooth five-manifold $M$. Its minimum regular rank is at most four if $M$ is orientable, and at most eight in general.
\end{proposition}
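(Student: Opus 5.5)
By Theorem~\ref{thm:five}, the minimum regular rank of $\G$ is $2\ind(\alpha)$, where $\alpha=\beta_2(\xi)\in H^3(M;\Z)$. It therefore suffices to show that $\ind(\alpha)\le 2$ when $M$ is orientable and $\ind(\alpha)\le4$ in general. If $\alpha=0$ the index is one and there is nothing to prove. So assume $\alpha\ne0$; then $d_3(1)=\alpha$ up to sign, and \eqref{eq:awfive} gives $\ind(\alpha)=2\operatorname{ord}(d_5(2))$. The class $d_5(2)$ lies in the quotient
\[
 Q=H^5(M;\Z)/(\alpha\smile H^2(M;\Z)),
\]
and the task is to bound the order of this class.

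\emph{Orientable case.} Here $H^5(M;\Z)\cong\Z$ because $M$ is closed, connected and orientable. The group $Q$ is a quotient of $\Z$, while $d_5(2)$ is a torsion element of order dividing four by \eqref{eq:awfive}. I see two ways to conclude $d_5(2)=0$.
\begin{enumerate}
\item If $\alpha\smile H^2(M;\Z)=0$, then $Q=\Z$ is torsion-free, so a torsion class in it must vanish.
\item If instead $\alpha\smile H^2$ is a nonzero subgroup $m\Z$, then $Q=\Z/m$. This case needs more care. I would use that $d_5(2)$ is natural and lifts to the universal value in $H^5(K(\Z/2,2);\Z)\cong\Z/4$, so it equals $f^*$ of a torsion class under the classifying map $f$. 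The image $f^*(H^5(K(\Z/2,2);\Z))$ is a torsion subgroup of $H^5(M;\Z)\cong\Z$, hence zero already in $H^5(M;\Z)$. Its image in $Q$ is then zero.
\end{enumerate}
In either case $\operatorname{ord}(d_5(2))=1$. This gives $\ind(\alpha)=2$ and minimum regular rank at most four.

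\emph{Non-orientable case.} Now $H^5(M;\Z)\cong\Z/2$, so every element of $Q$ has order at most two. Hence $\operatorname{ord}(d_5(2))\le 2$, $\ind(\alpha)\le4$, and the minimum regular rank is at most eight. Realization by an actual bundle is already covered by Theorem~\ref{thm:five}, since $2\ind(\alpha)\ge4\ge\dim M-1$.

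The main obstacle is making precise the claim that $d_5(2)$ is pulled back from the universal differential, so that the representative lies in the image of a torsion group. Antieau--Williams express $d_5(2)$ by a natural formula, namely a specific integral lift of a cohomology operation applied to $\xi$; see \cite[Theorem 2.5]{AW6}. Naturality then makes it the image under $f^*$ of a class in $H^5(K(\Z/2,2);\Z)\cong\Z/4$, and a torsion class maps to zero in $\Z$. I would verify from that formula that the representative is natural as an element of $H^5$ itself, not merely of the quotient $Q$. If it is only natural in the quotient, the fallback is to argue that the indeterminacy $\alpha\smile H^2$ is itself torsion, since $\alpha$ is $2$-torsion. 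That would make $Q$ torsion-free modulo a torsion subgroup of $\Z$, which is zero, so $Q=\Z$ and the conclusion follows as in the first case.
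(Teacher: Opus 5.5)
Your proposal is correct and is essentially the paper's proof: the paper uses exactly your fallback observation. Since $2\alpha=0$, every product $\alpha\smile b$ is torsion and hence zero in $H^5(M;\Z)\cong\Z$, so $Q=\Z$ and the finite-order class $d_5(2)$ must vanish. Consequently your case (ii) never occurs, and the naturality detour through $K(\Z/2,2)$ is unnecessary; the nonorientable case, via $H^5(M;\Z)\cong\Z/2$, is the same as in the paper.
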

\begin{proof}
If $\alpha=0$, rank two suffices. Otherwise, use \eqref{eq:awfive}. For orientable $M$, $H^5(M;\Z)=\Z$. Every product $\alpha\smile b$ is torsion, hence zero in this group. The finite-order class $d_5(2)$ must also vanish. Thus $\ind(\alpha)=2$ and rank four suffices.

For nonorientable $M$, integral Poincar\'e duality with the orientation local system gives $H^5(M;\Z)=\Z/2$. The target quotient of $d_5(2)$ has exponent at most two, so $\ind(\alpha)\mid4$. Proposition~\ref{prop:gerbeindex} realizes the resulting bound in actual bundles. Smoothing preserves their stabilizer representations.
\end{proof}

A smooth orbifold with all full stabilizers equal to $\mu_2$ is a banded gerbe over a manifold: in a centered chart every nearby stabilizer has order two and hence is the entire chart group, so that group acts trivially. Its band is constant because $\operatorname{Aut}(\mu_2)$ is trivial. Gerbe enlargement permits the coarse-cohomology argument to apply to varying isotropy as well; we give it in Section~\ref{subsec:coarsefive}. We leave open whether the bound eight is attained by a banded $\mu_2$-gerbe over a closed nonorientable five-manifold.

\section{Varying isotropy in dimension five}\label{sec:varying}

Throughout this section every cell stabilizer is $1$ or $\mu_2$. Pullback from the enlarged gerbe gives a regular bundle on $X$ of the same rank. We now construct examples in which pullback preserves the minimum regular rank and the resulting orbispace has a dense free locus. Write $\smin(X)$ for the minimum positive constant rank of a regular bundle on $X$.

\subsection{Rank and odd multiplicity}
Let $X=\|(Z,G)\|$ be finite, with coarse space $M$. The simplices with group $\mu_2$ form a subcomplex $S\subset M$: if a simplex has nontrivial stabilizer, the injectivity of all face maps forces every face to have nontrivial stabilizer as well. Write $X_S$ for the suborbispace over $S$.

\begin{lemma}[Rank and odd multiplicity]\label{lem:cochains}
There is a natural isomorphism of cellular cochain complexes
\begin{equation}\label{eq:cochains}
 C^*(X;R)\cong C^*(M;\Z)\oplus C^*(S;\Z),
\end{equation}
where the left-hand side has representation-ring coefficients. At a $\mu_2$-cell it is given by
\[
 a+b\eps\longmapsto(a+b,b),
\]
and at a trivial-group cell it is the rank map.
\end{lemma}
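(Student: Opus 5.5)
The plan is to define the map cell by cell, check that it is an isomorphism of graded groups, and then verify that it commutes with the cellular coboundaries. By Proposition~\ref{prop:ss}, the cochain group $C^p(X;R)$ is $\bigoplus_{\dim\sigma=p}R(G_\sigma)$. On a $p$-cell with $G_\sigma=\mu_2$ we have $R(\mu_2)=\Z\oplus\Z\eps$, and the map $a+b\eps\mapsto(a+b,b)$ is a unimodular change of basis from $\Z^2$ to $\Z\oplus\Z$. Its first coordinate is the rank (the character at the identity), and its second is the multiplicity of $\eps$, which I call the odd multiplicity. On a trivial-group cell, $R(1)=\Z$ and we send $a\mapsto a$ into $C^p(M;\Z)$, with no $S$-component; this is consistent because such cells do not lie in $S$. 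Summing over cells, we get an isomorphism of graded groups $C^p(X;R)\cong C^p(M;\Z)\oplus C^p(S;\Z)$, since the $p$-cells of $M$ are all $p$-cells and the $p$-cells of $S$ are exactly those with group $\mu_2$.

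Next, compatibility with $d_1$. The coboundary on the left is the signed sum of restriction maps $\res:R(G_\sigma)\to R(G_\tau)$ along the injections $\phi_{\sigma\tau}:G_\tau\to G_\sigma$ for faces $\sigma\subset\tau$; conjugacies act trivially, as noted in the proof of Proposition~\ref{prop:ss}. It therefore suffices to show that each restriction map corresponds to the coordinatewise coboundary contribution, with the same incidence signs, which come from the common simplicial structure. There are three cases, since a trivial-group $\sigma$ cannot have a $\mu_2$ coface $\tau$.
\begin{enumerate}
\item Both groups are $\mu_2$. Here $\phi_{\sigma\tau}$ is the identity because $\operatorname{Aut}(\mu_2)=1$. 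Restriction is the identity, matching the identity on both rank and odd-multiplicity coordinates, and $\sigma\subset\tau$ is a face relation in $S$ as well as in $M$.
\item $G_\sigma=\mu_2$ and $G_\tau=1$. Restriction sends $a+b\eps$ to $a+b$, the rank. In the target, $\tau\notin S$, so the $C^*(S)$ coboundary has no $\tau$-component, and the $C^*(M)$ coboundary transports the rank coordinate $a+b$.
\item Both groups are trivial. Restriction is the identity on $\Z$.
\end{enumerate}
In all cases the maps agree, so the $C^*(M;\Z)$ summand carries the ordinary coboundary and the $C^*(S;\Z)$ summand carries the coboundary of $S$ with only $S$-cofaces. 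That is precisely the cellular cochain complex of the subcomplex $S$. Naturality in maps of such complexes of groups follows because rank and $\eps$-multiplicity commute with restriction along injections of $\mu_2$.

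The only delicate point is the second case: the odd-multiplicity component must be dropped rather than mapped somewhere. This is exactly why the second summand is $C^*(S)$, and not a relative or quotient complex. I would confirm that $S$ is closed under faces, which the paper notes follows from injectivity of face maps; this makes $C^*(S)$ a quotient of $C^*(M)$ by restriction, so the dropped component is consistent. Signs are inherited identically on both sides, so nothing further needs checking.
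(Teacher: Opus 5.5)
Your proposal is correct and follows the same route as the paper: a determinant-one change of basis $a+b\eps\mapsto(a+b,b)$ on each $\mu_2$-cell, followed by checking that restriction between $\mu_2$-cells preserves both coordinates and that restriction to a trivial-group coface keeps the rank and discards the odd multiplicity. These are exactly the incidence maps of $C^*(M;\Z)\oplus C^*(S;\Z)$; your explicit case analysis, including the observation that a trivial-group cell has no $\mu_2$ coface, only spells out what the paper's three-sentence proof leaves implicit.
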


\begin{proof}
The map $\Z\{1,\eps\}\to\Z^2$ has determinant one. Restriction between two nontrivial cell groups preserves both coordinates, and restriction to the trivial subgroup keeps the first coordinate and discards the second. These are precisely the incidence maps for the two complexes on the right of~\eqref{eq:cochains}.
\end{proof}

By Proposition~\ref{prop:ss}, the even rows of the second page are therefore
\begin{equation}\label{eq:mixedE2}
 E_2^{p,q}(X)=H^p(M;\Z)\oplus H^p(S;\Z)\qquad(q\text{ even}),
\end{equation}
and the odd rows vanish. Formula~\eqref{eq:cochains} splits the coefficient complex; higher differentials can mix the two summands.

Assume $M$ is connected and $S\ne\varnothing$. In these coordinates the normalized regular family of \eqref{eq:omega} is
\[
 \omega=(2,1_S),
\]
and its stable regular index is the integer from Definition~\ref{def:index}.

For a gerbe over $M$, the weight decomposition~\eqref{eq:weights} splits the cellular spectral sequence of Proposition~\ref{prop:ss}. Indeed, the weight projections commute with restriction, the relative bundle constructions, suspension, and multiplication by the untwisted Bott class. They therefore split the long exact sequences of skeletal pairs and their exact couple. In the weight-one summand, $E_\infty^{0,0}=\ind(\alpha)\Z$: the permanent ranks are exactly the ranks of virtual weight-one bundles.

Use \emph{weight coordinates} $(u,v)$ for these two summands. On the gerbe, they correspond to rank and odd multiplicity $(u+v,v)$ in~\eqref{eq:cochains}. If $j:X\to\G$ is the enlargement map, its map on even rows of the second page is
\begin{equation}\label{eq:jmap}
 (u,v)\longmapsto(u+v,i^*v),\qquad i:S\hookrightarrow M.
\end{equation}
In particular the regular family $(1,1)$ of the gerbe maps to $\omega$.

\subsection{Removing the stabilizers on the five-simplices}

Start with a finite five-dimensional simplicial complex $M$ and a banded $\mu_2$-gerbe $\G$ on it. Choose a two-cocycle on the face poset representing its class. This gives a complex-of-groups presentation on $M$ with group $\mu_2$ on every simplex.

Define $X$ by keeping these groups on every simplex of dimension at most four and replacing the group on each five-simplex by $1$. Retain all twisting elements. These data still form a complex of groups. Indeed, the smallest simplex in a strict triple has dimension at most three, so its group has not been changed. The conjugacy and coherence identities remain valid, and the injections from a newly trivial group are unique. The identity maps on unchanged groups and the inclusions $1\to\mu_2$ give
\begin{equation}\label{eq:topmap}
 j:X\longrightarrow\G,
 \qquad S=M^{(4)}.
\end{equation}
This is representable and is an equivalence over the four-skeleton.

\begin{theorem}[Preservation of the index]\label{thm:transfer}
In this construction suppose $M$ is connected and
\[
 H^2(M;\Z)=0.
\]
If $\xi$ is the gerbe class and $\alpha=\beta_2(\xi)$, then
\begin{equation}\label{eq:transfer}
 I_K(X)=\ind(\alpha),\qquad
 \smin(X)=2\ind(\alpha).
\end{equation}
If $\alpha\ne0$, then $\smin(X_S)=4$.
\end{theorem}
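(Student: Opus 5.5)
The plan is to compare the cellular spectral sequences of $X$ and $\G$ along the enlargement map $j$ of \eqref{eq:topmap}. On even rows of the second page this map is given by \eqref{eq:jmap}, and it sends the regular family $(1,1)$ of $\G$ to $\omega=(2,1_S)$. Here $L_o=2$, so a regular bundle of rank $2a$ on $X$ has normalized multiplier $a$.

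For the upper bound, Theorem~\ref{thm:five} gives an actual regular bundle of rank $2\ind(\alpha)$ on $\G$, and pulling it back along $j$ gives one on $X$. Hence $I_K(X)\mid\ind(\alpha)$ and $\smin(X)\le 2\ind(\alpha)$. If $\alpha=0$, everything equals $1$ resp.\ $2$, and we are done. So assume $\alpha\ne0$.

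For the lower bound, suppose $a\omega$ is a permanent cycle on $X$. I would first get evenness of $a$ by restriction to $X_S$. Since $j$ is an equivalence over $S=M^{(4)}$, $X_S\simeq\G|_S$. The lifted virtual class restricts to a virtual weight-one bundle of rank $a$ on $S$, twisted by $\alpha|_S$. The pair $(M,S)$ has relative cochains only in degree five, so restriction $H^3(M;\Z)\to H^3(S;\Z)$ is injective. Thus $\alpha|_S\ne0$, so $\ind(\alpha|_S)\ge2$. By \eqref{eq:awfive}, $d_5$ lands in $H^5(S;\Z)=0$, so in fact $\ind(\alpha|_S)=2$. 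Therefore $2\mid a$, i.e.\ the first obstruction $o_1$ of Theorem~\ref{thm:index} equals $2$ for both $X$ and $\G$.

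The second obstruction is where the work lies; this is the step I expect to be the main obstacle. I must identify $E_5^{5,-4}$ for both spectral sequences and show that naturality transports $d_5$ without loss of order. By \eqref{eq:mixedE2}, $E_2^{5,*}(X)=H^5(M;\Z)\oplus H^5(S;\Z)=H^5(M;\Z)$. The only incoming $d_3$ comes from $E_3^{2,*}$, which is $H^2(M)\oplus H^2(S)=0$: here $H^2(S)\cong H^2(M)=0$, because restriction is an isomorphism in degree two. So $E_5^{5,-4}(X)=H^5(M;\Z)$.

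On the gerbe side, the weight splitting gives $H^5(M)\oplus H^5(M)$. In weight one the incoming $d_3$ is cup product with $\alpha$ on $H^2(M)=0$, and in weight zero the incoming $d_3$ is zero. By \eqref{eq:jmap}, the comparison on $E_5^{5,-4}$ is the sum map $(u,v)\mapsto u+v$.

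Naturality then gives
\[
 d_5^X(2\omega)=d_5^{(0)}(2)+d_5^{\alpha}(2).
\]
The weight-zero term vanishes, because the rank-two trivial bundle is an actual untwisted class. Hence $d_5^X(2\omega)=d_5^{\alpha}(2)$ in $H^5(M;\Z)$, with the same order. I would check carefully that the $E_5$ identifications are compatible with the comparison, using only that $j$ is a map of filtered objects. Theorem~\ref{thm:index} and \eqref{eq:awfive} then give
\[
 I_K(X)=2\operatorname{ord}(d_5^{\alpha}(2))=\ind(\alpha).
\]
Since every regular rank on $X$ is $2a$ with $a\in I_K(X)\Z$, and rank $2\ind(\alpha)$ is attained, we get $\smin(X)=2\ind(\alpha)$.

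Finally, $X_S\simeq\G|_S$ is a gerbe over a connected complex of dimension at most four with $\alpha|_S\ne0$ and $\ind(\alpha|_S)=2$. The argument of Theorem~\ref{thm:five} applies, since $2\cdot2\ge\dim S-1$ permits realization by Proposition~\ref{prop:gerbeindex}. It gives $\smin(X_S)=2\ind(\alpha|_S)=4$.
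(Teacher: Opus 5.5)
Your proposal is correct and follows essentially the paper's route. You compare the cellular spectral sequences along $j$, use $H^2(M;\Z)=0$ and $H^5(S;\Z)=0$ to see that the comparison on $E_5^{5,-4}$ is the sum map carrying $d_5^{\G}(2,2)=(0,\gamma)$ to $d_5^X(2\omega)=\gamma$, and pull back the rank-$2\ind(\alpha)$ bundle of Theorem~\ref{thm:five}.

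The one difference is the first obstruction. You obtain it by restricting permanent classes to $X_S\simeq\G|_S$, where $\ind(\alpha|_S)=2$. The paper instead shows $d_3^{\G}(0,1)=(0,\eta)$ with $\eta\ne0$ by a determinant argument on $M^{(3)}$, and transports this along $j$. Strictly, your evenness statement gives $2\mid I_K(X)$ rather than literally $o_1=2$. Together with $d_3^X(2\omega)=0$, it still forces $I_K(X)=2\ord(d_5^X(2\omega))$, so nothing is lost.
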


\begin{proof}
If $\alpha=0$, a regular rank-two bundle pulls back from $\G$, proving the assertion. Suppose $\alpha\ne0$. We use the cellular spectral sequence of Proposition~\ref{prop:ss} for both $X$ and $\G$, and write $d_r^{\G}$ for the differentials of the gerbe. In weight coordinates put
\[
 d_3^{\G}(0,1)=(0,\eta),\qquad \eta\in H^3(M;\Z).
\]
The family $(1,0)$ is permanent, since a trivial line bundle realizes it. We claim that $\eta\ne0$. The restriction of $\alpha$ to $M^{(3)}$ is nonzero, since cellular cohomology makes $H^3(M;\Z)\to H^3(M^{(3)};\Z)$ injective. If $\eta=0$, then $(0,1)$ would be a permanent cycle on this three-skeleton and hence the fibre family of a virtual weight-one bundle $[E]-[D]$ of rank one. Its determinant $\det(E)\otimes\det(D)^{-1}$ would be an actual weight-one line bundle. Equation~\eqref{eq:detobstruction} would force $\alpha|_{M^{(3)}}=0$, a contradiction. Theorem~\ref{thm:annihilation}, with $g_1(2)=2$, now shows that $\eta$ has order exactly two.

Thus the surviving weight-one ranks on the fifth page are $2\Z$. There is no incoming third differential at bidegree $(5,-4)$, because $H^2(M;\Z)=0$, and no outgoing differential from that bidegree in dimension five. Define
\[
 d_5^{\G}(2,2)=(0,\gamma),\qquad
 \gamma\in H^5(M;\Z),\qquad \nu=\ord(\gamma).
\]
The weight-zero component again vanishes because trivial bundles realize it. The order $\nu$ is finite because the permanent weight-one rank subgroup is nonzero. The fifth differential is the last possible edge differential, so that subgroup is $2\nu\Z$. Its bundle-level description above gives
\begin{equation}\label{eq:indexq}
 \ind(\alpha)=2\nu,
 \qquad \nu\mid4,
\end{equation}
where the divisibility follows from the index bound in \cite[Theorem A]{AW6}.

Since $S=M^{(4)}$, the relative cellular cochain complex $C^*(M,S;\Z)$ is concentrated in degree five. Thus the restriction maps $H^j(M;\Z)\to H^j(S;\Z)$ are isomorphisms for $j\leq3$. Naturality and~\eqref{eq:jmap} give $d_3^X(\omega)=(\eta,\eta|_S)$, which still has order two. Since $H^2(M;\Z)=H^2(S;\Z)=0$, the incoming $d_3$ images vanish on both sides. Moreover, $H^5(S;\Z)=0$, so the fifth-page map on the obstruction target is
\begin{equation}\label{eq:sumtarget}
 H^5(M;\Z)\oplus H^5(M;\Z)\longrightarrow H^5(M;\Z),
 \qquad (u,v)\longmapsto u+v.
\end{equation}
Naturality sends $(0,\gamma)$ to $\gamma$. Since the target map in \eqref{eq:sumtarget} restricts to the identity on the second summand, the order remains $\nu$. The first obstruction forces $k$ to be even, and the fifth obstruction to $(k/2)(2\omega)$ vanishes exactly when $\nu\mid k/2$. Hence $k\omega$ is a permanent cycle precisely when $2\nu\mid k$. There are no further differentials in dimension five, so $I_K(X)=2\nu=\ind(\alpha)$.

Every actual regular rank is a multiple of $2I_K(X)$. Theorem~\ref{thm:five} supplies a regular bundle on $\G$ of rank $2\ind(\alpha)$; pullback attains this rank on $X$.

Finally $X_S$ is the gerbe restricted to a four-dimensional base. The class $\eta|_S$ has order two, and the third differential is its only possible edge differential. Hence its permanent weight-one ranks are $2\Z$, so $\ind(\alpha|_S)=2$. Proposition~\ref{prop:gerbeindex} realizes rank two in each weight. Its minimum regular rank is therefore four.
\end{proof}

\begin{remark}\label{rem:mixing}
The order-four class in the example below survives in the \emph{coarse rank coordinate} of~\eqref{eq:cochains}, because the map on the target is the sum~\eqref{eq:sumtarget}. Its restriction to the nonfree locus is zero. Thus attaching the free five-cells raises the minimum rank beyond that detected by the gerbe on $S$.
\end{remark}

\subsection{Attainment with a dense free locus}\label{subsec:mixedsharp}
\begin{lemma}\label{lem:seed}
There is a finite connected pure five-dimensional simplicial complex $M$ and a class $\xi\in H^2(M;\Z/2)$ such that
\begin{equation}\label{eq:seed}
 H^2(M;\Z)=0,\qquad \per(\beta_2\xi)=2,\qquad\ind(\beta_2\xi)=8.
\end{equation}
Here pure means that every simplex is contained in a five-simplex.
\end{lemma}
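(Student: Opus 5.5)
The natural starting point is the example already used in the proof of Theorem~\ref{thm:five}: $X=K(\Z/2,2)^{(6)}$ in a finite-type model, its five-skeleton $Y=X^{(5)}$, and the restricted class $\alpha_Y$. That proof shows $H^2(X;\Z)=0$, that restriction $X\to Y$ is an isomorphism in degrees two and three, and that $\alpha_Y$ has period two and index eight. In particular $H^2(Y;\Z)\cong H^2(X;\Z)=0$. The remaining work is to replace $Y$ by a finite connected \emph{pure} five-dimensional simplicial complex without changing these invariants.

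First, make $Y$ simplicial. The simplicial-abelian-group model of $K(\Z/2,2)$ is a simplicial set with finitely many simplices in each degree. I would pass to its second barycentric subdivision, or apply any standard finite triangulation of a finite CW complex. This gives a finite simplicial complex homotopy equivalent to the five-skeleton and five-dimensional. I would then transport $\xi_Y$ and $\alpha_Y$ along the homotopy equivalence. Period, index and $H^2(-;\Z)$ are homotopy invariants, since the index is defined through twisted $K$-theory on finite CW complexes. The complex is connected because $K(\Z/2,2)$ is simply connected.

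Second, make the complex pure. For each maximal simplex $\sigma$ of dimension $k<5$, glue a five-simplex $\Delta^5$ along a $k$-dimensional face identified with $\sigma$. Call the result $M$. Each added simplex collapses onto $\sigma$, so $M$ deformation retracts onto the original complex. It is still finite, connected and five-dimensional, and every simplex now lies in a five-simplex. Homotopy invariance then carries \eqref{eq:seed} over to $M$.

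Third, lift the integral class. Choose $\xi\in H^2(M;\Z/2)$ with $\beta_2\xi=\alpha$, which is possible by the Bockstein exact sequence because $2\alpha=0$. The statement is phrased through $\xi$, so I would check that the pullback of $\xi_Y$ has Bockstein equal to the pulled-back $\alpha$; this holds by naturality.

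The main obstacle is the index claim $\ind(\beta_2\xi)=8$. Index is defined by ranks of twisted bundles, so one must confirm that it is a homotopy invariant of the pair $(M,\alpha)$. This follows from the identification of the index with the image of $K^0(M;\alpha)\to\Z$ in \cite[Proposition 2.21]{AW2014}. That identification is homotopy invariant, and it also agrees with \eqref{eq:awfive}, whose differential $d_5(2)$ is natural under homotopy equivalences. Checking that the cellular-approximation and triangulation steps are compatible with these statements is the one point that needs care. Everything else is formal once the example from Theorem~\ref{thm:five} is in hand.
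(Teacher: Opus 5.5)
Your proposal is correct and follows the paper's proof: replace the five-skeleton $Y$ from Theorem~\ref{thm:five} by a homotopy-equivalent finite five-dimensional simplicial complex (the paper uses Hatcher's Theorem 2C.5, which is your second option), attach five-simplices with new vertices along the lower-dimensional maximal simplices to make it pure, and transport the classes. Your first option, iterated barycentric subdivision of the simplicial-abelian-group model, does not work: the model's nondegenerate $2$-simplex has all its faces degenerate, and no iterated subdivision of a simplicial set with such a simplex is a simplicial complex, so you should rely on the CW triangulation theorem.
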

\begin{proof}
The five-skeleton $Y$ used in the proof of Theorem~\ref{thm:five} has $H^2(Y;\Z)=0$ and a period-two class of index eight. Hatcher's simplicial replacement theorem \cite[Theorem 2C.5]{HatcherAT} gives a homotopy-equivalent finite simplicial complex of the same dimension. Transport the gerbe class across that equivalence.

If the complex is not pure, attach a five-simplex along each maximal simplex of dimension less than five, using new vertices for each attachment. A simplex strongly deformation retracts onto any chosen nonempty face, fixing that face. These finitely many attachments preserve the homotopy type and make the complex pure. The resulting $M$ has the required properties.
\end{proof}

\begin{proof}[Proof of the sharpness assertion in Theorem~\ref{thm:order_two}]
Take the gerbe of the class $\xi$ from Lemma~\ref{lem:seed}, and perform the top-cell replacement~\eqref{eq:topmap}. Theorem~\ref{thm:transfer} gives
\[
 \smin(X)=16,\qquad \smin(X_S)=4.
\]
The free locus is exactly the union of the open five-simplices. It is open and dense because $M$ is pure. The four-skeleton is nonempty and has stabilizer $\mu_2$ throughout. Thus $X$ is a finite connected complex of groups with the asserted ranks and stabilizer loci.
\end{proof}

\begin{corollary}[Stable and actual ranks in dimension five]\label{cor:actualindex}
For any finite connected complex of groups of dimension at most five with stabilizers of order at most two and a nonempty nonfree locus,
\[
 \smin(X)=2I_K(X),\qquad
 I_K(X)\mid8.
\]
In particular the only possible minimum ranks are $2,4,8,16$.
\end{corollary}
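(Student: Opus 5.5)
We prove the corollary in two steps. First we show $I_K(X)\mid 8$, and that every $k\omega$ with $I_K(X)\mid k$ is a permanent cycle, by comparing $X$ with its gerbe enlargement. Then we turn the stable index into an actual rank using Theorem~\ref{thm:index}.

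Let $X=\|(Z,G)\|$ have coarse space $M$ of dimension at most five, and let $S\ne\varnothing$ be its nonfree subcomplex. We may assume $M$ is connected. Proposition~\ref{prop:enlargement} gives a representable map $j:X\to\G$ to a banded $\mu_2$-gerbe over $M$, with class $\alpha=\beta_2(\xi)$. By Theorem~\ref{thm:five}, $\G$ has a regular bundle of rank $2\ind(\alpha)$, and $\ind(\alpha)\mid 8$ by \eqref{eq:awfive}. Pulling this bundle back along $j$ gives a regular bundle on $X$ of the same rank, with normalized multiplier $\ind(\alpha)$. By Definition~\ref{def:index} its multiplier lies in the permanent subgroup, so $I_K(X)\mid\ind(\alpha)\mid 8$.

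For the rank equality, recall that $L_o=2$ because $S\ne\varnothing$, so every regular rank is $2a$ with $a$ in the permanent subgroup. Hence $\smin(X)\geq 2I_K(X)$. For the reverse inequality we apply Theorem~\ref{thm:index} with $n=5$, where $h_5=2$. It realizes every multiple $a\geq 2$ of $I_K(X)$ by an actual regular bundle of rank $2a$. If $I_K(X)\geq 2$, this gives $\smin(X)=2I_K(X)$ directly.

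The main obstacle is the case $I_K(X)=1$, where Theorem~\ref{thm:index} only gives rank four. We want rank two, i.e.\ a regular bundle that is a line bundle over the free cells and the odd character on $S$. Corollary~\ref{cor:positive} fails here, because the multiplicities are $1<2=h_5$. We plan to go through the gerbe instead. We first show that $I_K(X)=1$ forces $\ind(\alpha)=1$, i.e.\ $\alpha=0$. Once $\alpha=0$, the gerbe has a weight-one line bundle, as in the proof of Theorem~\ref{thm:five}. Its pullback along $j$ is regular of rank two.

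To obtain $\alpha=0$, note that $\omega$ permanent on $X$ yields a virtual bundle on $X^{(3)}$ of rank one in odd multiplicity on $S$. Its determinant is an actual line bundle. At each point of $S$ this line bundle carries the character $\eps$, since the $\eps$-multiplicity has odd total $1$, and the free part contributes no constraint. This makes $\alpha|_S=0$. Naturality of $d_3$ under \eqref{eq:jmap} pins down the free cells: $d_3^X(\omega)=(\eta,\eta|_S)$ vanishes only if $\eta=0$ in $H^3(M)$. By the determinant argument of Theorem~\ref{thm:transfer}, $\eta=0$ gives $\alpha=0$.

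If this route fails, we will instead prove the rank-two realization directly. The multiplicity-one bundles are line bundles, and a rank-one virtual class on a finite complex is represented by its determinant line bundle. The $d_5$ obstruction then becomes irrelevant, since line bundles are classified by $H^2$-type data. Combining both steps gives $\smin(X)=2I_K(X)\in\{2,4,8,16\}$.
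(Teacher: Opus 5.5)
Your proof is correct. For $I_K(X)\mid 8$ and for the case $I_K(X)\geq 2$ it is essentially the paper's argument. The paper cites the rank-sixteen bound of Theorem~\ref{thm:order_two} and Corollary~\ref{cor:positive}; your $I_K(X)\mid\ind(\alpha)$ is the same enlargement argument, slightly refined.

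The case $I_K(X)=1$ is where you genuinely differ.

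\textbf{The paper's route.} It argues directly on $X$. Take any virtual class $[E]-[D]$ with fibre family $\omega$. Its determinant $L=\det(E)\otimes\det(D)^{-1}$ is an actual line bundle carrying $\eps$ at every $\mu_2$-point. Hence $\mathbf 1\oplus L$ is regular of rank two. This uses no gerbe and no spectral-sequence comparison, and $L$ need not represent the original class; only its fibres matter.

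\textbf{Your route.} You go through the enlargement. Since $E_3^{3,-2}(X)=H^3(M;\Z)\oplus H^3(S;\Z)$ receives no earlier differentials, $d_3^X(\omega)=(\eta,\eta|_S)=0$ forces $\eta=0$. The determinant argument from the proof of Theorem~\ref{thm:transfer}, which does not use $H^2(M;\Z)=0$, then gives $\alpha=0$. So the gerbe carries a weight-one line $\ell$, and $\mathbf 1\oplus\ell$ pulls back to a regular rank-two bundle.

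\textbf{Comparison.} Your route is valid and yields slightly more: $\smin(X)=2$ holds exactly when the Bockstein $\alpha$ of the enlarged gerbe class vanishes. The cost is reliance on the weight splitting of the gerbe's spectral sequence and on \eqref{eq:jmap}.

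\textbf{Minor points.} Your intermediate observation $\alpha|_S=0$ is not needed. Your fallback paragraph is in substance the paper's proof. However, the sentence ``a rank-one virtual class is represented by its determinant line bundle'' is false as a statement in $K$-theory; for example, $1+\beta_2$ on $S^4$ has trivial determinant. It should be phrased as above: the determinant merely has the right fibre characters.
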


\begin{proof}
The rank-sixteen upper bound shows $I_K(X)\mid8$. If $I_K(X)\geq2$, Corollary~\ref{cor:positive} represents a virtual class with fibre family $I_K(X)\omega$ by an actual bundle. At a $\mu_2$-cell both multiplicities are $I_K(X)$, and at a free cell the rank is twice that value.

If $I_K(X)=1$, choose a virtual regular class $[E]-[D]$ with fibre family $\omega$. Its determinant line
\[
 L=\det(E)\otimes\det(D)^{-1}
\]
has sign character at every $\mu_2$-point, since the virtual odd multiplicity is one. Then $\mathbf1\oplus L$ is an actual regular bundle of rank two. The lower bound follows in either case from the definition of $I_K(X)$.
\end{proof}

An ordinary space has minimum regular rank one, which explains the nonempty-isotropy hypothesis.

\subsection{Coarse top cohomology and the oriented bound}\label{subsec:coarsefive}
The $2$-primary torsion in $H^5(M;\Z)$ gives a sharper rank bound. For a finite complex $M$, let $e_5(M)$ be the exponent of this subgroup, with exponent one for the zero subgroup.

\begin{lemma}[Top cohomology of an oriented coarse space]\label{lem:coarsetop}
Let $X$ be a connected closed oriented smooth $n$-orbifold without boundary, with $n\geq2$, and let $M=|X|$. A good triangulation subordinate to the isotropy stratification makes $M$ a connected oriented $n$-dimensional pseudomanifold. In particular,
\[
 H^n(M;\Z)\cong\Z.
\]
\end{lemma}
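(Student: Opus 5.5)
We plan to use the good triangulation of Lemma~\ref{rem:triangulation}, or more precisely the Moerdijk--Pronk triangulation underlying it. It is finite because $|X|$ is compact, it is subordinate to the isotropy strata, and its closed stars lie in chart footprints. We then check the pseudomanifold axioms chart by chart and finish with the standard top-cohomology computation for a connected oriented pseudomanifold.

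\textbf{Step 1: purity and the codimension-one condition.} In a chart $[V/G]$, the triangulation lifts to a $G$-equivariant triangulation of the manifold $V\cong\R^n$, and the coarse chart is $V/G$. Since the smooth action is locally linear, the fixed set of every nontrivial element acting nontrivially is a proper linear subspace. Hence the isotropy strata of the effective action have codimension at least one, and the top-dimensional open stratum is dense. So every simplex lies in an $n$-simplex.

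For an $(n-1)$-simplex $\tau$, consider a lift $\widetilde\tau$ in $V$. It is a face of exactly two $n$-simplices, because $V$ is a manifold. The key point is what happens when $G$ identifies these two simplices. Since the triangulation is subordinate to the strata, the interior of $\tau$ lies in a single stratum. If that stratum has codimension one, it is a reflection wall: an element $g$ fixes $\widetilde\tau$ pointwise and swaps the two sides. Orientation of $X$ means that every local group element preserves the orientation of $V$. An element fixing a hyperplane pointwise and swapping its sides reverses orientation, so it cannot occur. Elements acting trivially on $V$ do not change the coarse space. Therefore the two adjacent $n$-simplices are not identified, and $\tau$ is a face of exactly two $n$-simplices in $M$.

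\textbf{Step 2: strong connectedness and orientation.} The union of the open $n$-simplices and open $(n-1)$-simplices is contained in the coarse space of the codimension-$\geq2$ complement $X\setminus\Sigma^{(2)}$, where $\Sigma^{(2)}$ is the union of strata of codimension at least two. On this set the effective isotropy is trivial, since codimension-one isotropy has already been excluded. So this locus is a topological manifold, oriented by the orbifold orientation. Removing a closed subset of codimension at least two from a connected orbifold keeps it connected: locally, $V$ minus subspaces of codimension at least two is connected, and its quotient is connected. This gives strong connectedness, and the orientation gives coherent orientations of the $n$-simplices. The orbifold orientation is $G$-invariant on each $V$, so it descends to the coarse manifold locus.

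\textbf{Step 3: top cohomology.} For a finite, connected, oriented $n$-pseudomanifold, the simplicial chain group $C_n$ has kernel of $\partial_n$ generated by the fundamental cycle. Each $(n-1)$-simplex meets exactly two $n$-simplices with opposite induced signs, and strong connectedness forces all coefficients of a cycle to agree. So $H_n(M;\Z)\cong\Z$. The same argument on the cochain side shows that $\delta\colon C^{n-1}\to C^n$ has image equal to the kernel of the sum-of-oriented-coefficients map, which is surjective onto $\Z$. Indeed, the coboundary of an $(n-1)$-cochain dual to $\tau$ is the difference of the two adjacent oriented top cells, and by strong connectedness these differences span that kernel. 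Hence $H^n(M;\Z)\cong\Z$.

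The main obstacle is Step 1: carefully excluding codimension-one identifications in the coarse space for ineffective charts, and verifying that the equivariant lift really yields exactly two adjacent top simplices downstairs. We will handle this by passing to the effective reduction, which has the same coarse space and the same orientation behaviour.
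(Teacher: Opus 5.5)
Your outline follows the paper's route. You pass to the effective reduction and use orientation to exclude reflections, so the singular locus has codimension at least two. You note that the good triangulation is pure, with every open $(n-1)$-simplex regular and bounding exactly two top simplices. You then prove strong connectedness and compute $\operatorname{coker}(\delta\colon C^{n-1}\to C^n)\cong\Z$. Steps 1 and 3 are fine. In Step 1 you could add the one-line reason why a regular $\tau$ causes no identification: an element carrying one adjacent top simplex onto the other must preserve $\widetilde\tau$, the unique lift of $\tau$ in each. It therefore fixes the barycentre of $\widetilde\tau$ and lies in the isotropy of $\tau$.

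The gap is in Step 2. What you actually prove there is that the regular locus $M_{\mathrm{reg}}=|X\setminus\Sigma^{(2)}|$ is connected, by removing finitely many linear subspaces in each chart. Strong connectedness is connectedness of $M\setminus M^{(n-2)}$, the union of the open $n$- and $(n-1)$-simplices. This is in general strictly smaller than $M_{\mathrm{reg}}$, because it also omits the simplices of dimension at most $n-2$ in the regular stratum. Knowing that this union is \emph{contained in} a connected set gives nothing. Your local justification (``$V$ minus subspaces of codimension at least two is connected'') also does not apply to the lifted $(n-2)$-skeleton, which is a union of smooth simplices, not linear subspaces.

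One more step is needed. Inside the manifold $M_{\mathrm{reg}}$, a path joining interior points of two top simplices can be perturbed to miss the $(n-2)$-skeleton and to cross the $(n-1)$-faces one at a time. The sequence of top simplices it passes through is then a chain of adjacencies. This is exactly the extra argument the paper supplies; alternatively, you can cite the fact that a closed set of covering dimension at most $n-2$ does not separate a connected $n$-manifold.

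You should also make the local-to-global passage for $M_{\mathrm{reg}}$ explicit. Cover a path in the connected space $M$ by centred charts whose regular parts are connected. Density of the regular part then guarantees that consecutive overlaps contain regular points.
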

\begin{proof}
Pass to the effective reduction, which does not change the coarse space or its orientation. In an oriented orbifold every local stabilizer acts by orientation-preserving linear transformations. A nonidentity element therefore cannot have a fixed hyperplane: a finite-order real transformation that is the identity on a hyperplane acts by $-1$ on its normal line and reverses orientation. Hence the singular locus of the effective reduction has codimension at least two.

Choose a good triangulation $Z$ subordinate to the isotropy stratification, as in Lemma~\ref{rem:triangulation}. Its lifts triangulate the local $n$-dimensional charts, so it is pure of dimension $n$. Every open $(n-1)$-simplex lies in the regular stratum and is incident to exactly two $n$-simplices. The orbifold orientation gives coherent orientations of the top simplices, with opposite induced orientations along every common face.

The regular stratum is connected. Indeed, in a centered effective linear chart $B/G$, its preimage is
\[
 B\setminus\bigcup_{1\ne g\in G}B^g.
\]
This is the complement in a ball of finitely many linear subspaces of codimension at least two. It is dense and path connected: two points can be joined by two line segments through a generic intermediate point, both avoiding those subspaces. Its quotient is thus also path connected. The connected space $M$ is locally path connected. Cover a path in $M$ by a finite chain of these chart neighbourhoods; each consecutive overlap contains a regular point by density. Paths through their regular parts then join any two regular points. Within this regular manifold, a path between interiors of top simplices can be perturbed to avoid the $(n-2)$-skeleton and cross the $(n-1)$-faces one at a time. Thus the top-simplex adjacency graph of $Z$ is connected, and $M$ is an oriented pseudomanifold.

In the top cellular cochain group, the coboundary of each oriented $(n-1)$-simplex is the difference of its two incident top-simplex generators. Since the adjacency graph is connected, the quotient by these differences is freely generated by the common class of the top simplices. Therefore the cokernel is $\Z$.
\end{proof}

\begin{proposition}\label{prop:coarsebound}
Let $X$ be a finite complex of groups of dimension at most five, with groups $1$ and $\mu_2$ and coarse space $M$. Then it has a regular bundle of rank
\begin{equation}\label{eq:coarserank}
 4\gcd(4,e_5(M)).
\end{equation}
If the Bockstein of the enlarged gerbe class vanishes, rank two suffices. In particular, if $H^5(M;\Z)$ has no $2$-primary torsion, rank four suffices.
\end{proposition}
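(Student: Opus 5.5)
We reduce to the enlarged gerbe and show that its index divides $2\gcd(4,e_5(M))$. Proposition~\ref{prop:enlargement} gives a representable map $j:X\to\widehat X$ to a banded $\mu_2$-gerbe over $M$ with class $\xi\in H^2(M;\Z/2)$. Pullback along $j$ preserves regular bundles and their ranks. By Theorem~\ref{thm:five}, $\widehat X$ has a regular bundle of rank $2\ind(\alpha)$, where $\alpha=\beta_2(\xi)$. It therefore suffices to prove $\ind(\alpha)\mid2\gcd(4,e_5(M))$ when $\alpha\neq0$. If $\alpha=0$, Theorem~\ref{thm:five} already gives rank two, which is the second assertion.

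Assume $\alpha\ne0$. Then $\alpha$ has period two, and \eqref{eq:awfive} gives $\ind(\alpha)=2\,\ord(d_5(2))$, with $\ord(d_5(2))\mid4$. The class $d_5(2)$ lies in the quotient $H^5(M;\Z)/(\alpha\smile H^2(M;\Z))$. The key step is to show that its order also divides $e_5(M)$.

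By Theorem~\ref{thm:annihilation}, $d_5(2)$ has finite order; this is also contained in \eqref{eq:awfive}. Choose a representative $c\in H^5(M;\Z)$ of this class. Then some multiple $mc$ lies in $\alpha\smile H^2(M;\Z)$. That subgroup is torsion, because $\alpha$ is $2$-torsion. Hence $c$ itself is torsion in $H^5(M;\Z)$.

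Write $c=c_2+c'$, with $c_2$ the $2$-primary part and $c'$ of odd order. The order of $d_5(2)$ divides $4$, so it equals the order of its image under multiplication by a large odd number $N$ with $Nc'=0$. Since $N$ is odd and the order of $d_5(2)$ divides $4$, the image $N\,d_5(2)$ generates the same cyclic subgroup as $d_5(2)$. The class $N\,d_5(2)$ is represented by $Nc_2$, which lies in the $2$-primary torsion. Therefore $e_5(M)\,d_5(2)=0$. Combining this with $\ord(d_5(2))\mid4$ gives $\ord(d_5(2))\mid\gcd(4,e_5(M))$. Hence $\ind(\alpha)\mid2\gcd(4,e_5(M))$, and the gerbe has a regular bundle of rank $4\gcd(4,e_5(M))$.

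To realize this rank by an actual bundle, apply Proposition~\ref{prop:gerbeindex} with $k=2\gcd(4,e_5(M))\geq2$. Then $2k\geq4=\dim M-1$, so the virtual class is represented by an actual bundle. Pulling it back along $j$ gives the bundle on $X$. The final assertion is the case $e_5(M)=1$, which gives rank four.

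The main obstacle is the torsion bookkeeping in the quotient $H^5(M;\Z)/(\alpha\smile H^2(M;\Z))$. The exponent $e_5(M)$ is defined on $H^5(M;\Z)$ itself, not on this quotient. We handle this by showing that the class is represented by a $2$-primary torsion element, as above. If the cup-product subgroup complicates this argument, we instead use $\ord(d_5(2))\mid4$ first, which directly forces the relevant representative into $4$-torsion modulo the cup-product subgroup.
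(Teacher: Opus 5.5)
Your proposal is correct and follows the paper's proof. You enlarge to the $\mu_2$-gerbe, use $\ind(\alpha)=2\nu$ with $\nu\mid4$, bound $\nu$ by $e_5(M)$ using that $\alpha\smile H^2(M;\Z)$ is $2$-torsion, and pull back an actual bundle. The differences are cosmetic: you split off the odd part of a representative where the paper notes that the torsion of the quotient is a quotient of the torsion of $H^5(M;\Z)$, and you realize rank $4\gcd(4,e_5(M))$ directly via Proposition~\ref{prop:gerbeindex} rather than by direct sums of the rank-$4\nu$ bundle from Theorem~\ref{thm:five}.
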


\begin{proof}
Enlarge $X$ over $M$ and let $\alpha$ be the Bockstein of its gerbe class. The vanishing case follows from Theorem~\ref{thm:five}. Suppose $\per(\alpha)=2$. The computation of \cite[Theorem A]{AW6} gives
\begin{equation}\label{eq:Q}
 \ind(\alpha)=2\nu,\qquad
 \nu=\ord\bigl(\widetilde Q(\alpha)\bigr)\mid4,
 \qquad
 \widetilde Q(\alpha)\in
 \frac{H^5(M;\Z)}{\alpha\smile H^2(M;\Z)}.
\end{equation}
The image $\alpha\smile H^2(M;\Z)$ is contained in the $2$-primary torsion subgroup and is annihilated by two. The torsion subgroup of $H^5(M;\Z)/(\alpha\smile H^2(M;\Z))$ is therefore a quotient of the torsion subgroup of $H^5(M;\Z)$. Its $2$-primary exponent divides $e_5(M)$, so $\nu\mid\gcd(4,e_5(M))$.

An actual regular bundle of rank $4\nu$ on the enlarged gerbe exists by Theorem~\ref{thm:five}. Taking the appropriate direct sum and pulling back gives exactly~\eqref{eq:coarserank}.
\end{proof}

\begin{proof}[Proof of the upper bound in Theorem~\ref{thm:oriented_five}]
Let $M$ be the coarse space of a connected closed oriented smooth five-orbifold with stabilizers of order at most two. Orientation means an orientation of the tangent orbibundle, so every local stabilizer preserves it. Lemma~\ref{lem:coarsetop} gives
\begin{equation}\label{eq:topcohom}H^5(M;\Z)\cong\Z.\end{equation}
Use the complex-of-groups presentation over this same coarse triangulation from Lemma~\ref{rem:triangulation}. Proposition~\ref{prop:coarsebound} gives a continuous regular bundle of rank four. The smoothing argument in the proof of Theorem~\ref{thm:main}, using \cite[Proposition 4.1]{BL}, preserves its rank and every stabilizer representation. For a disconnected compact orbifold apply the argument on each component and use rank four on all components.
\end{proof}

\Needspace{11\baselineskip}
\section{Sharp examples and dimension dependence}\label{sec:examples}

\subsection{A smooth three-dimensional example}
Let
\begin{equation}\label{eq:rootgerbe}
 \G_2=[S^3/U(1)],\qquad z\cdot(v_1,v_2)=(z^2v_1,z^2v_2).
\end{equation}
This locally free action presents a smooth ineffective two-orbifold. Its coarse space is $S^2$, all stabilizers are $\mu_2$, and it is the nontrivial banded $\mu_2$-gerbe whose class is the mod-two reduction of a generator of $H^2(S^2;\Z)$. Let $\eta$ denote the line bundle associated with a weight-one character, choosing the sign of the Hopf line $H$ so that
\begin{equation}\label{eq:eta}
 \eta^{\otimes2}\cong\pi^*H.
\end{equation}

\begin{lemma}\label{lem:pic}
The Picard group of $\G_2$ is infinite cyclic, generated by $\eta$. Its odd line bundles are precisely $\eta^{\otimes(2a+1)}$, $a\in\Z$.
\end{lemma}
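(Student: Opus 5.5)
The plan is to identify line bundles on $\G_2=[S^3/U(1)]$ with $U(1)$-equivariant line bundles on $S^3$ and compute $\Pic$ as the equivariant cohomology group $H^2_{U(1)}(S^3;\Z)$. The action has finite stabilizers $\mu_2$ and is free modulo $\mu_2$, so the Borel construction $S^3\times_{U(1)}EU(1)$ is rationally equivalent to the coarse space $S^2$. Since complex line bundles on a topological stack are classified by maps to $BU(1)=K(\Z,2)$, $\Pic(\G_2)\cong H^2(S^3_{hU(1)};\Z)$.

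To compute this group I will use the Gysin (Serre) sequence of the circle fibration $S^3\to S^3_{hU(1)}\to BU(1)$, or equivalently view $S^3_{hU(1)}$ as the sphere bundle of the representation $\C^2$ with weight $2$ over $BU(1)=\C P^\infty$. Its Euler class is $(2x)^2=4x^2$, where $x$ generates $H^2(\C P^\infty)$. The Gysin sequence
\[
 H^{-2}\to H^0(\C P^\infty)\xrightarrow{\cdot 4x^2}H^4(\C P^\infty)
\]
is not needed in low degrees. In degree two we get $H^0(\C P^\infty)\xrightarrow{\cdot e}H^4$, and the map $H^2(\C P^\infty)\to H^2(S^3_{hU(1)})$ is an isomorphism because the Euler class has degree four. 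Hence $\Pic(\G_2)\cong\Z$, generated by the pullback of $x$. This generator is the line bundle associated with the standard character of $U(1)$, namely $S^3\times_{U(1)}\C_{1}$.

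Next I will identify this generator with $\eta$. The stabilizer $\mu_2\subset U(1)$ acts on $\C_1$ by its nontrivial character, so the generator is a weight-one (odd) line bundle, and this is the defining description of $\eta$. The character $\C_m$ restricts to $\mu_2$ as $\eps^m$, so the character $\C_2$ descends to the coarse quotient $S^3/U(1)=S^2$ as the Hopf bundle up to sign. This checks \eqref{eq:eta} and fixes the sign convention.

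Finally, the weight of $\eta^{\otimes m}$ on the band is $m \bmod 2$. Since every line bundle is $\eta^{\otimes m}$ for a unique $m$, the odd ones are exactly those with $m=2a+1$. The main obstacle is justifying the classification of line bundles on the stack by the Borel cohomology $H^2_{U(1)}(S^3;\Z)$. I would establish it through the standard equivalence between line bundles on $[P/G]$ and $G$-equivariant line bundles on $P$, together with the classification of the latter by equivariant $H^2$ for compact Lie group actions. Alternatively, an elementary route avoids this: the weight decomposition \eqref{eq:weights} shows that even line bundles are pullbacks from $S^2$, so they form the group $\Pic(S^2)=\Z\langle H\rangle=\Z\langle\eta^2\rangle$. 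Any two odd line bundles then differ by an even one, so every odd line bundle has the form $\eta\otimes\eta^{2a}$. I would use this second route as the main argument.
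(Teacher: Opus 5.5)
Your proof is correct, and the route you adopt as your main argument differs from the paper's.

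Your first sketch is essentially the paper's proof. The paper classifies line bundles on $\G_2$ by $H^2$ of the Borel construction. It then uses the fibration over $BU(1)$, whose spectral sequence has only fibre rows $0$ and $3$, to get $\Pic(\G_2)\cong H^2(BU(1);\Z)\cong\Z$. It identifies the generator with $\eta$ and reads off parity by restricting to $\{\pm1\}$. Your displayed Gysin fragment is the degree-four piece. The relevant piece is $0=H^{-2}(BU(1))\to H^2(BU(1))\to H^2(S^3_{hU(1)})\to H^{-1}(BU(1))=0$, which gives your conclusion.

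Your preferred second route works instead through the weight splitting. Weight-zero bundles on the banded gerbe $\G_2\to S^2$ are exactly pullbacks, and $\pi^*$ is injective on isomorphism classes; this is the case $j=0$ of the discussion around \eqref{eq:weights}. The band character therefore gives an exact sequence $0\to\Pic(S^2)\to\Pic(\G_2)\to\Z/2$. In it, $\eta$ maps to the generator and $\eta^{\otimes2}\cong\pi^*H$ generates the kernel, which forces $\Pic(\G_2)=\Z\eta$. State the injectivity of $\pi^*$ explicitly, since it is what makes $\eta^{\otimes2a}$ nontrivial for $a\neq0$ and gives infinite cyclicity rather than mere cyclicity.

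The two routes trade off as follows.
\begin{itemize}
\item Your route avoids the Borel-cohomology classification of equivariant line bundles for compact Lie group actions, which the paper cites without proof. It also stays inside the weight framework the paper uses elsewhere.
\item The cost is that you must exhibit the odd line $\eta$ and verify \eqref{eq:eta} independently, with $H$ a generator of $\Pic(S^2)$. This holds because the squared character factors through $U(1)/\mu_2$, which acts on $S^3$ by the Hopf action.
\item In the Borel computation, by contrast, the generator, its parity, and \eqref{eq:eta} all come out of the calculation.
\end{itemize}
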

\begin{proof}
Equivariant complex line bundles are classified by the second integral cohomology of the Borel space. In the fibration
\[
 S^3\longrightarrow EU(1)\times_{U(1)}S^3\longrightarrow BU(1),
\]
the Serre spectral sequence has only the fibre rows in degrees zero and three. In total degree two it gives an isomorphism from $H^2(BU(1);\Z)=\Z$ to the Borel cohomology. Its generator classifies $\eta$. Restriction to the stabilizer $\{\pm1\}$ sends $\eta$ to the sign character, proving the parity assertion. The quotient Hopf bundle is associated with the squared character, giving \eqref{eq:eta}.
\end{proof}

Complex conjugation $c:S^3\to S^3$ is equivariant for the automorphism $z\mapsto\overline z$ of $U(1)$. It induces a smooth automorphism $\widetilde c$ of $\G_2$ covering a degree $-1$ map of $S^2$. It fixes the band $\mu_2$ and satisfies $\widetilde c^*\eta\cong\eta^{-1}$. Let $T$ be its mapping torus.

\begin{theorem}\label{thm:sharpfour}
The compact smooth three-orbifold $T$ has constant stabilizer $\mu_2$ and minimum complex regular rank four. The same minimum holds for $T\times S^1$ in dimension four.
\end{theorem}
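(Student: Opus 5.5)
Since $T$ is three-dimensional with all stabilizers $\mu_2$, it is a banded $\mu_2$-gerbe over its coarse space, the mapping torus $N$ of the degree $-1$ map of $S^2$. The band is constant because $\operatorname{Aut}(\mu_2)=1$ and $\widetilde c$ fixes the band. The plan has two halves: an upper bound of rank four, then a lower bound excluding rank two.

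\emph{Upper bound.} Theorem~\ref{thm:gerbebound} (or Theorem~\ref{thm:order_two}) with $n=3$ or $4$ gives $t=1$ and rank $2^{2}=4$. So $T$ and $T\times S^1$ both admit regular bundles of rank four. We can also see this directly: $\eta\oplus\widetilde c^*\eta\cong\eta\oplus\eta^{-1}$ is $\widetilde c$-invariant up to the swap isomorphism, so it descends to an odd rank-two bundle on $T$. Adding a trivial rank-two bundle then gives a regular bundle of rank four. Smoothness follows from the smoothing argument used in Theorem~\ref{thm:main}.

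\emph{Lower bound.} By the weight decomposition \eqref{eq:weights}, a regular bundle of rank two is $E_0\oplus E_1$ with $E_1$ an odd line bundle. It therefore suffices to show that $T$ has no weight-one line bundle. Restricted to a fibre $\G_2\subset T$, such a bundle would be $\eta^{\otimes(2a+1)}$ by Lemma~\ref{lem:pic}. Descending to the mapping torus requires an isomorphism $\widetilde c^*E_1|_{\G_2}\cong E_1|_{\G_2}$. Since $\widetilde c^*\eta\cong\eta^{-1}$, this would give $\eta^{-(2a+1)}\cong\eta^{2a+1}$, i.e.\ $\eta^{2(2a+1)}$ trivial, contradicting that $\Pic(\G_2)\cong\Z$ and $2(2a+1)\ne0$. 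The same argument applies to $T\times S^1$ by restricting to a slice $T\times\{pt\}$, and then to $\G_2$.

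The step needing most care is the claim $\widetilde c^*\eta\cong\eta^{-1}$ together with the identification of the restriction. I would verify the first via the Borel description: $c$ intertwines the action through $z\mapsto\overline z$, so it acts by $-1$ on $H^2(BU(1))$. Equivalently, it pulls the weight-one character back to weight $-1$. Restriction to a fibre is then formal: a line bundle on a mapping torus restricts to a $\widetilde c$-invariant isomorphism class on the fibre.
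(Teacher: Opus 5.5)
Your proposal is correct and follows the paper's proof. For the lower bound you restrict an odd line summand to a fibre $\G_2$, use $\widetilde c^*\eta\cong\eta^{-1}$ in $\Pic(\G_2)\cong\Z$, and then restrict to $T\times\{\mathrm{pt}\}$. For the upper bound you use the same bundle $\underline\C^{\,2}\oplus\eta\oplus\eta^{-1}$, glued by swapping the two odd lines under $\widetilde c$. Your alternative upper bound via Theorem~\ref{thm:gerbebound} (or Theorem~\ref{thm:order_two}) with $t=1$, and your Borel-character check of $\widetilde c^*\eta\cong\eta^{-1}$, are valid additions that the paper does not spell out.
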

\begin{proof}
A regular rank-two bundle on $T$ would have an odd line summand. Restriction to a fibre $\G_2$ would give an odd line $E_-$ with $\widetilde c^*E_-\cong E_-$, because the bundle glues around the mapping torus and the band is fixed. By Lemma~\ref{lem:pic}, write $E_-=\eta^{2a+1}$. The isomorphism would imply $2a+1=-(2a+1)$ in $\Pic(\G_2)=\Z$, which is impossible.

Rank four is attained directly. On $\G_2$, take
\[
 \underline\C^{\,2}\oplus\eta\oplus\eta^{-1}.
\]
It has two copies of each stabilizer character. The identity on the trivial summand and interchange of the two odd lines give the required gluing under $\widetilde c$. Thus it descends to $T$. Pullback gives rank four on $T\times S^1$, while restriction to $T\times\{1\}$ rules out rank two there.
\end{proof}

The theorem proves that the uniform ranks $R(3,2)=R(4,2)=4$ of Part~I are optimal. It also identifies the first stable obstruction. The coarse space $M$ of $T$ is the mapping torus of the degree $-1$ map of $S^2$. The Wang sequence gives
\[
 H^2(M;\Z)=0,\qquad H^3(M;\Z)=\Z/2.
\]
With mod-two coefficients, restriction $H^2(M;\Z/2)\to H^2(S^2;\Z/2)$ is an isomorphism. The class of the gerbe is its nonzero element, whose integral Bockstein is nonzero. Thus $d_3$ in the twisted spectral sequence forces the rank of every odd-weight bundle to be even.

\subsection{A closed oriented five-dimensional example}
Write
\[
 Y=S^3\times S^2\times S^1,
 \quad S^3\subset\C^2,
 \quad S^2=\{(w,h)\in\C\oplus\R:|w|^2+h^2=1\}.
\]
Let $O(2)=U(1)\rtimes\langle r\rangle$, with $r^2=1$ and $rzr=z^{-1}$. With $\theta\in\R/2\pi\Z$, define an action by
\begin{align}
 z\cdot(v,w,h,\theta)&=(z^2v,zw,h,\theta),\label{eq:rotation}\\
 r\cdot(v,w,h,\theta)&=(\bar v,\bar w,h,\theta+\pi).
 \label{eq:reflection}
\end{align}
The formulas satisfy the defining relations of $O(2)$. Put
\[
 \mathcal O=[Y/O(2)].
\]

\begin{proposition}\label{prop:Oexample}
The quotient $\mathcal O$ is a connected closed oriented effective smooth five-orbifold. Its stabilizers are $1$ and $\mu_2$, its free locus is dense, and
\[
 \smin(\mathcal O)=4.
\]
\end{proposition}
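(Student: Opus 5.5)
We verify the orbifold properties from the explicit action, obtain the upper bound $4$ either from Theorem~\ref{thm:oriented_five} or from an explicit four-dimensional representation, and rule out rank two with the determinant-line argument of Theorem~\ref{thm:sharpfour}, applied to a mapping-torus suborbifold.

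\textbf{Stabilizers and orbifold structure.} An element $z\in U(1)$ fixing $(v,w,h,\theta)$ must satisfy $z^2v=v$. Since $v\neq0$ on $S^3$, this forces $z=\pm1$. The element $-1$ fixes the point exactly when $w=0$, that is, $h=\pm1$. Elements $rz$ shift $\theta$ by $\pi$, so they never fix a point. Hence the stabilizers are $1$ off the locus $\{w=0\}$ and $\{1,-1\}\cong\mu_2$ on it. This locus has codimension two, so the free locus is dense. The action is smooth and compact with finite stabilizers, so $\mathcal O$ is a closed smooth $5$-orbifold ($\dim Y-\dim O(2)=6-1$). It is effective because generic stabilizers are trivial. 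It is connected because $Y$ is connected.

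\textbf{Orientation.} We check that each of $z$ and $r$ preserves an orientation of $Y$ and of $\mathfrak o(2)$ compatibly. The key point is that the quotient orientation involves the sign of $r$ on $Y$ times its sign on the Lie algebra, and $r$ acts by $-1$ on $\mathfrak u(1)$. Conjugation $\bar v$ on $S^3\subset\C^2$ is orientation preserving on $\C^2$ (determinant $+1$), hence on $S^3$. The map $(w,h)\mapsto(\bar w,h)$ reverses $S^2$. Translation preserves $S^1$. Thus $r$ reverses $Y$ and reverses $\mathfrak u(1)$, so the net effect on the normal orientation is preserving. This sign bookkeeping is the step I would check most carefully.

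\textbf{Upper bound.} Theorem~\ref{thm:oriented_five} gives rank four immediately. An explicit construction is preferable. Take the $O(2)$-representation $\C^2\oplus V_1$, where $V_1=\C^2$ is the standard two-dimensional representation: $z$ acts by $\mathrm{diag}(z,z^{-1})$ and $r$ swaps the two coordinates. The associated bundle $Y\times_{O(2)}(\C^2\oplus V_1)$ has fibre at a $\mu_2$-point equal to $2\cdot1\oplus2\cdot\eps$, which is regular. So $\smin(\mathcal O)\le4$.

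\textbf{Lower bound.} Consider the suborbifold $\{h=1\}\cup\{h=-1\}$, i.e.\ $w=0$. It is the quotient of $S^3\times\{\pm1\}\times S^1$ by $O(2)$. The element $r$ fixes $h$, so the $h=1$ component alone is $[S^3\times S^1/O(2)]$, with $z$ acting by $z^2$ and $r$ acting by conjugation together with the shift by $\pi$. This is exactly the mapping torus $T$ of $\widetilde c$ on $\G_2$: the subgroup $U(1)$ gives $\G_2$ over each $\theta$, and $r$ glues $\theta$ to $\theta+\pi$ via $\widetilde c$. A regular rank-two bundle on $\mathcal O$ would restrict to a regular rank-two bundle on this suborbifold $T$, since the stabilizers there are $\mu_2$ and the full stabilizer representation is preserved. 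Theorem~\ref{thm:sharpfour} forbids this. Rank one is impossible at a $\mu_2$-point. Hence $\smin(\mathcal O)=4$.

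The main obstacle I expect is identifying the restriction with $T$ precisely, namely checking that the conjugation on $S^3$ matches $\widetilde c$, including the band, so that $\widetilde c^*\eta\cong\eta^{-1}$ applies.
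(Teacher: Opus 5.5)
Your proposal is correct and follows the paper's proof essentially step for step: the same stabilizer computation, the same cancellation of the signs of $r$ on $\det TY$ and on $\mathfrak o(2)$, the same representation $\C^2\oplus(\rho_{\R}\otimes_{\R}\C)$ for the upper bound, and the same restriction to the pole suborbifold $T=[(S^3\times S^1)/O(2)]$ combined with Theorem~\ref{thm:sharpfour} for the lower bound. The differences are minor: you deduce effectiveness from the dense free locus, whereas the paper observes that $-1$ acts by $-1$ on the two normal directions. You should also state that regular ranks at $\mu_2$-points are even, since this excludes rank three as well as rank one.
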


\begin{proof}
An element in the reflection component shifts $\theta$ by $\pi$, so it cannot fix a point. A rotation fixing $v\in S^3$ must satisfy $z^2=1$. If $w\ne0$, it must also satisfy $z=1$. If $w=0$, both $z=1$ and $z=-1$ fix the point. Thus the nonfree locus is given by the two poles of the $S^2$ factor, and all stabilizers are finite. The compact group action is proper and locally free, so the quotient is a smooth orbifold of dimension $6-1=5$. It is closed and connected. Its free locus $w\ne0$ is open and dense. At a point with stabilizer $\mu_2$, the nonidentity element acts as $-1$ on the two real normal directions to the pole locus, proving effectiveness.

For orientation, conjugation on $S^3\subset\C^2$ has degree $+1$, while $(w,h)\mapsto(\bar w,h)$ on $S^2$ has degree $-1$. Translation on $S^1$ preserves orientation. Thus $r$ reverses the orientation of $Y$. It also reverses the orientation of the one-dimensional orbit Lie algebra, since $rzr=z^{-1}$. The two signs cancel on
\[
 \det(TY)\otimes\det(\mathfrak{o}(2))^{-1},
\]
the orientation line of the quotient tangent bundle. Rotations preserve both factors. This gives the stated orbifold orientation.

Consider the pole $w=0,h=1$. The corresponding suborbifold is
\[
 T=[(S^3\times S^1)/O(2)].
\]
First divide by $U(1)$, which acts with weight two on $S^3$. This gives the root gerbe
\[
 \G_2=[S^3/U(1)]\longrightarrow\C P^1.
\]
The remaining involution acts by complex conjugation on $\G_2$ and by a half-turn on $S^1$. Thus $T$ is the mapping torus of Theorem~\ref{thm:sharpfour}. A regular rank-two bundle on $\mathcal O$ would restrict to one on $T$, contradicting that theorem.

Rank four has a direct construction. Let $\rho_{\R}$ be the standard real two-dimensional representation of $O(2)$ and use the complex representation
\begin{equation}\label{eq:explicitbundle}
 V=\C^2_{\mathrm{triv}}\oplus(\rho_{\R}\otimes_{\R}\C).
\end{equation}
The rotation $-1$ acts as $+1$ on the first two dimensions and as $-1$ on the last two. The bundle $[Y\times V/O(2)]\to\mathcal O$ is therefore regular of rank four at every nontrivial stabilizer, and it is regular at every free point. Since a regular rank at a $\mu_2$-point is even, this proves the minimum is four and completes Theorem~\ref{thm:oriented_five}.
\end{proof}

\subsection{A three-dimensional example for every prime}
The preceding smooth mapping-torus argument uses the fact that inversion fixes $\mu_2$. For larger primes one can instead use a finite CW base.

\begin{theorem}\label{thm:prime}
For every prime $p$, there is a banded $\mu_p$-gerbe over a finite connected three-dimensional CW complex whose minimum regular rank is $p^2$.
\end{theorem}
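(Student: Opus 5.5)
We need a banded $\mu_p$-gerbe over a finite connected three-dimensional CW complex whose minimum regular rank is $p^2$. The plan is to compute $I_K(\G)$ through Proposition~\ref{prop:gerbeindex}, showing that it equals $p$, and then realize rank $p\cdot p$ by actual bundles.

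The base is the Moore space $M=S^2\cup_p e^3$. It is finite, connected and three-dimensional, with $H^2(M;\Z)=0$, $H^3(M;\Z)=\Z/p$ and $H^2(M;\Z/p)=\Z/p$. The Bockstein $\beta_p\colon H^2(M;\Z/p)\to H^3(M;\Z)$ is an isomorphism, since $H^2(M;\Z)=0$ and $H^3(M;\Z)$ is $p$-torsion. Choose $\xi$ to be a generator, so that $\alpha=\beta_p(\xi)$ generates $\Z/p$ and $j\alpha\ne0$ for $0<j<p$.

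For the index, there are no $d_5$ in dimension three, so in the twisted Atiyah--Hirzebruch spectral sequence only $d_3(1)=\pm j\alpha$ acts on the rank term. Its order is $p$, so $\ind(j\alpha)=p$ for $0<j<p$ and $\ind(0)=1$. This also follows from the paper's own tools. The upper bound $\ind(j\alpha)\mid p$ comes from \eqref{eq:awbound} with $d=2$, or equivalently from Theorem~\ref{thm:annihilation} with $g_1(p)=p$. For the lower bound, the determinant of a virtual weight-$j$ class of rank $k$ is an actual weight-$j$ line bundle twisted by $kj\alpha$, and \eqref{eq:detobstruction} forces $p\mid k$. Hence $I_K(\G)=p$ by \eqref{eq:gerbeindex}.

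For realization, apply Proposition~\ref{prop:gerbeindex} with $k=p$. The required inequality is $2p\geq n-1=2$, which holds, so there is an actual regular bundle of rank $p\cdot p=p^2$. For the lower bound, a regular bundle of rank $pk$ has a weight-one summand of rank $k$. Since $\ind(\alpha)=p$, we get $p\mid k$, so the rank is at least $p^2$.

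The main point to check is the determinant argument for actual and virtual twisted bundles of weight $j$. We need that $\det$ of a weight-$j$ rank-$k$ bundle is a weight-$kj$ line bundle, and that a twisted line bundle exists only if its twist vanishes in $H^3(M;\Z)$. The second fact holds because line bundles twisted by a class $\gamma$ are classified by a $\gamma$-twisted $\mathrm{Pic}$ torsor, which is nonempty only when $\gamma=0$. Once this is in place, $p\mid k$ follows from $kj\alpha=0$ with $j$ a unit mod $p$.
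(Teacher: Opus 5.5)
Your argument is correct and is essentially the paper's. The paper takes the mapping torus of a degree-$(1+p)$ self-map of $S^2$ rather than the Moore space $S^2\cup_p e^3$, but both bases have $H^2=0$ and $H^3\cong\Z/p$ with $\beta_p$ an isomorphism. Both proofs get the lower bound from the determinant obstruction \eqref{eq:detobstruction}, and get rank $p^2$ from the Antieau--Williams bound via Proposition~\ref{prop:gerbeindex}, which the paper packages as Theorem~\ref{thm:gerbebound} with $n=3$, $e=p$.

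One small slip: the determinant of a virtual weight-$j$ class of rank $k$ has weight $kj$, not $j$. You state this correctly in your last paragraph.
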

\begin{proof}
Let $f:S^2\to S^2$ be a cellular map of degree $1+p$, and let
\[
 M_p=(S^2\times[0,1])/\big((x,1)\sim(f(x),0)\big).
\]
This mapping torus is a finite three-dimensional CW complex. Its cohomology sequence gives
\[
 H^2(M_p;\Z)=\ker(1-(1+p):\Z\to\Z)=0,
\]
\[
 H^3(M_p;\Z)=\operatorname{coker}(1-(1+p):\Z\to\Z)=\Z/p.
\]
The Bockstein $H^2(M_p;\Z/p)\to H^3(M_p;\Z)$ is therefore an isomorphism. Let $\xi$ map to a generator $\alpha$, and form its banded $\mu_p$-gerbe.

A regular bundle of rank $pk$ has a weight-one summand of rank $k$. Its determinant forces $k\alpha=0$ by \eqref{eq:detobstruction}; hence $p\mid k$. Theorem~\ref{thm:gerbebound}, with $n=3$ and $e=p$, supplies rank $p^2$. This is the minimum.
\end{proof}

This proves sharpness for the single stabilizer group $\mu_p$, with $L_o=L_e=p$. The uniform problem allowing every group of order at most $p$ has a different bound when $p>2$.

\subsection{Unbounded regular rank with fixed cyclic isotropy}
Antieau--Williams construct finite complexes with a fixed period $p$ and unbounded topological index \cite[Section 5.2]{AW2014}. Smooth thickening transfers their obstruction to closed smooth orbifolds.

\begin{theorem}\label{thm:unbounded}
For every prime $p$ and every integer $N$, there is a connected compact smooth orbifold without boundary, with every full stabilizer equal to $\mu_p$, on which every complex regular bundle has rank greater than $N$.
\end{theorem}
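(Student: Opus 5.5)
The plan is to transfer the Antieau--Williams unbounded-index examples to smooth orbifolds by thickening, and then to use the weight decomposition \eqref{eq:weights}, which says that ranks of regular bundles are controlled by twisted indices.

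First, fix $p$ and $N$. By \cite[Section 5.2]{AW2014} there is a finite connected CW complex $K$ and a class $\alpha\in H^3(K;\Z)$ of period $p$ with $\ind(\alpha)>N$. We may take $K$ finite of some dimension $m$. Since $p\alpha=0$, the Bockstein sequence gives $\xi\in H^2(K;\Z/p)$ with $\beta_p(\xi)=\alpha$.

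Second, replace $K$ by a homotopy-equivalent compact smooth manifold. Embed $K$, up to homotopy equivalence, as a finite simplicial complex in $\R^{2m+1}$. A regular neighbourhood $W$ is then a compact smooth manifold with boundary that deformation retracts onto $K$. Its double $D=W\cup_{\partial W}W$ is a closed connected smooth manifold. The retraction $r:D\to W\simeq K$, which folds the double, restricts to the homotopy equivalence on one copy of $W$. Pull back $\xi$ along $r$ to get $\xi_D$ and $\alpha_D=r^*\alpha$. Because $K\hookrightarrow W\subset D$ composed with $r$ is homotopic to the identity of $K$, any $\alpha_D$-twisted bundle restricts to an $\alpha$-twisted bundle on $K$ of the same rank. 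Hence $\ind(\alpha)\mid\ind(\alpha_D)$, and in particular $\ind(\alpha_D)>N$.

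Third, realize the banded $\mu_p$-gerbe with class $\xi_D$ as a smooth orbifold. Choose a Čech cocycle with values in $\mu_p$ on a good cover of $D$. Alternatively, write $\xi_D$ via a principal $PU(p)$ or $U(1)$-type construction: take a smooth $U(1)$-bundle $P\to D$ whose Chern class reduces to $\xi_D$ where possible, or more generally use the quotient $[Q/U(1)]$ for a principal bundle realizing the gerbe. The simplest route is to glue trivial gerbes $U_i\times B\mu_p$ by the cocycle. This gives a compact smooth ineffective orbifold $\mathcal X$ with every full stabilizer $\mu_p$ and coarse space $D$. It is a banded gerbe, as in the remark before Section~\ref{sec:varying}.

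Finally, let $E$ be any regular bundle on $\mathcal X$. Its weight-one summand $E_1$ is a $\alpha_D$-twisted bundle of rank $\rk(E)/p$. So $\ind(\alpha_D)$ divides $\rk(E)/p$, which gives $\rk E\geq p\,\ind(\alpha_D)>N$.

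The main obstacle is the smooth realization of the gerbe with the correct class, and checking that its weight-one bundles really are $\alpha_D$-twisted. This reduces to the weight discussion preceding \eqref{eq:detobstruction}. I would rely on the cocycle-gluing description, and would simply cite \cite{LU} for the identification.
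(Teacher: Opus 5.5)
Your proposal is correct and follows the paper's proof essentially step by step: the Antieau--Williams complex with a Bockstein lift $\xi$, a compact smooth regular neighbourhood doubled to a closed manifold with a map $r$ satisfying $ri\simeq\id_K$, the banded gerbe glued from a \v{C}ech cocycle on a good cover, and the weight-one summand forcing $\ind(\alpha)\mid\rk(E)/p$. Two small points to tighten. A PL regular neighbourhood is not automatically smooth, so cite Hirsch's smooth regular neighbourhood theorem as the paper does, or take a smooth compact sublevel-set neighbourhood inside a neighbourhood that retracts onto $K$. Also drop the $U(1)$-bundle aside, since a root-type construction only produces classes with $\alpha_D=0$.
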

\begin{proof}
Choose a finite connected CW complex $K$ with a period-$p$ class $\alpha$ for which $p\ind(\alpha)>N$. Lift $\alpha$ to $\xi\in H^2(K;\Z/p)$ by the Bockstein exact sequence. Replace $K$ by a homotopy-equivalent finite polyhedron, carrying these classes along the equivalence.

After subdivision, realize this polyhedron as a finite subcomplex of a smooth triangulation of $\R^m$, with $m$ sufficiently large. Hirsch's theorem on smooth regular neighbourhoods \cite[Theorem~1(a)]{Hirsch} supplies a compact smooth codimension-zero submanifold $U\subset\R^m$ with boundary that is a PL regular neighbourhood of $K$. Thus $U$ collapses onto $K$. Choose a homotopy inverse $h:U\to K$ to the inclusion. Double $U$ along its boundary to obtain a closed connected smooth manifold $M$. Composing the fold map $M\to U$ with $h$ gives $r:M\to K$. The inclusion $i:K\to U\to M$ satisfies $ri\simeq\id_K$.

The class $r^*\xi$ defines a smooth banded $\mu_p$-gerbe on $M$. Choose a good cover of $M$ and represent $r^*\xi$ by locally constant \v Cech cocycles. The resulting proper local quotient charts have trivial $\mu_p$ action. Thus the gerbe is a compact smooth orbifold with precisely the asserted stabilizers.

Let $E$ be a regular bundle of rank $pk$ on the gerbe. Its weight-one summand is an $r^*\alpha$-twisted bundle of rank $k$. Pullback along $i$ gives an $\alpha$-twisted bundle on $K$. Hence $\ind(\alpha)\mid k$ and $pk\geq p\ind(\alpha)>N$.
\end{proof}

\subsection{Sharpness in dimension six}
\begin{corollary}\label{cor:sixsharp}
The largest possible minimum regular rank of a connected closed oriented smooth six-orbifold with stabilizers of order at most two is $16=R_2(6)$. It is attained by a banded $\mu_2$-gerbe over a closed oriented six-manifold.
\end{corollary}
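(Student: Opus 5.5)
The statement has two halves: an upper bound valid for every such orbifold, and an example that attains it. The upper bound comes directly from Theorem~\ref{thm:order_two}. A closed smooth six-orbifold is Hausdorff and second countable and has compact coarse space of dimension six. It is therefore a paracompact orbispace of dimension at most six with stabilizers of order at most two. For $n=6$ we have $t(6)=2$, so $R_2(6)=2^{2+1+v_2(2!)}=16$, and Theorem~\ref{thm:order_two} gives a regular bundle of rank $16$. The smoothing argument from the proof of Theorem~\ref{thm:main} makes this bundle smooth and Hermitian without changing any stabilizer representation. Disconnected orbifolds are handled componentwise, as in the proof of Theorem~\ref{thm:oriented_five}. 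This half needs no orientation hypothesis.

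For the lower bound, I would take the closed oriented six-manifold $M$ of Crowley--Grant \cite[Theorem 1.4(2)]{CG}. It carries a class $\alpha\in H^3(M;\Z)$ of period two and index eight. Since $\alpha$ is $2$-torsion, the Bockstein exact sequence lets me lift it to $\xi\in H^2(M;\Z/2)$ with $\beta_2(\xi)=\alpha$. I then build the banded $\mu_2$-gerbe $\G\to M$ exactly as in the proof of Theorem~\ref{thm:unbounded}. Represent $\xi$ by a locally constant \v{C}ech cocycle on a good cover, and glue product charts $U_i\times B\mu_2$ in which $\mu_2$ acts trivially. The result is a compact smooth orbifold without boundary, every full stabilizer is $\mu_2$, and it is connected because $M$ is. For orientation, note that the tangent orbibundle is the pullback of $TM$ with trivial stabilizer action, so the orientation of $M$ orients $\G$.

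Now let $E$ be a regular bundle on $\G$ of rank $2k$. The weight decomposition \eqref{eq:weights} produces a weight-one summand $E_1$ of rank $k$, and $E_1$ is an $\alpha$-twisted bundle on $M$; the sign convention can be chosen so that weight one corresponds to $\alpha$, and in any case $\ind(-\alpha)=\ind(\alpha)$. Hence $\ind(\alpha)=8$ divides $k$, so $\rk E\geq16$. Combined with the upper bound, the minimum regular rank of $\G$ is exactly $16$. This also agrees with Proposition~\ref{prop:gerbeindex}: $\lcm(\ind 0,\ind\alpha)=8$, and $2\cdot8\geq5$, so rank $16$ is realized by an actual bundle.

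The step that needs the most care is matching the cited Crowley--Grant statement to the hypotheses used here. I need to confirm that their index is the topological index defined as the gcd of ranks of finite-rank twisted bundles, which is the $\ind$ of this paper, rather than some variant, and that their manifold is closed, connected and oriented. If their result is stated instead through the twisted Atiyah--Hirzebruch spectral sequence, I would convert it using the identification of \cite[Proposition 2.21]{AW2014} recalled before Proposition~\ref{prop:gerbeindex}. Everything else is a direct application of results proved earlier.
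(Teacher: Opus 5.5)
Your proposal is correct and follows essentially the same route as the paper. The upper bound comes from Theorem~\ref{thm:order_two} together with smoothing, and the lower bound comes from the weight-one summand of the banded $\mu_2$-gerbe built from a Crowley--Grant period-two, index-eight class. The paper settles the point you flagged by citation: the Crowley--Grant manifold is an $S^2$-bundle over a nonorientable four-manifold, its total space is orientable by \cite[Lemma 5.4(i)]{CG}, and \cite[Proposition 5.9]{CG} supplies the class $x\in H^2(N;\Z/2)$ directly, so your Bockstein lift is harmless but not needed.
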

\begin{proof}
Theorem~\ref{thm:order_two} gives the upper bound $R_2(6)=16$. Crowley--Grant construct a connected closed smooth six-manifold $N$ and a class $x\in H^2(N;\Z/2)$ for which
\[
 \alpha=\beta_2(x),\qquad \per(\alpha)=2,\qquad \ind(\alpha)=8;
\]
see \cite[Theorem 1.4(2) and Proposition 5.9]{CG}. Their manifold is the total space of a two-sphere bundle over a nonorientable four-manifold. The total space $N$ is orientable by \cite[Lemma 5.4(i)]{CG}.

Represent $x$ by a locally constant \v{C}ech two-cocycle on a good cover of $N$. The resulting banded gerbe $\G\to N$ has smooth charts $U\times B\mu_2$, with $\mu_2$ acting trivially on $U$. It is a connected closed smooth six-orbifold, oriented by the pullback of an orientation of $N$.

A regular bundle of rank $2k$ on $\G$ has a weight-one summand of rank $k$. Its rank is divisible by $\ind(\alpha)=8$, so $16\mid 2k$. Conversely, Proposition~\ref{prop:gerbeindex} realizes rank eight in each weight, since $2\cdot8\geq6-1$. Their direct sum is a regular bundle of rank sixteen. Smoothing as in the proof of Theorem~\ref{thm:main} preserves these weights and their ranks.
\end{proof}

The stabilizer of this gerbe acts trivially in every chart; its free locus is empty.

\section{Further questions}\label{sec:questions}

For a fixed family of stabilizer groups, the least multiplier in \eqref{eq:universal} depends on the Adams operations on their representation rings, as Example~\ref{ex:q8} shows. One can then ask how this multiplier relates to the obstructions on spaces with those stabilizers.

\begin{question}
For which finite families of stabilizers does the minimum universal Adams multiplier give the order of an actual edge differential on a finite complex of groups?
\end{question}

On a finite five-complex with stabilizers of order at most two, the fifth differential on the surviving normalized regular family has order at most four, although $g_2(2)=8$. Indeed, $8\omega$ survives by Theorem~\ref{thm:order_two}. If the first obstruction has order two, the next order divides four. If the first obstruction vanishes, a virtual regular class on the four-skeleton has determinant an odd line. That line extends across the five-cells: its pullback to each $S^4\times BH$ is a trivial line with a fixed character of $H$, and hence extends over the characteristic disk. Its sum with a trivial line realizes $\omega$, so the next obstruction vanishes as well. The example in Section~\ref{sec:varying} attains order four.

The statement concerns the normalized regular family. When the nonfree locus is disconnected, other fibre families need not come from constant gerbe weights.

\begin{question}
What is the sharp bound among closed nonorientable smooth five-orbifolds with both stabilizer types?
\end{question}

Theorem~\ref{thm:oriented_five} answers the oriented case. Proposition~\ref{prop:coarsebound} gives a bound in terms of integral top cohomology. The example attaining sixteen in Section~\ref{sec:varying} is a finite complex of groups; attainment in the smooth nonorientable category remains to be determined.

\begin{question}
In which higher dimensions is $R_2(n)$ attained with a dense free locus, and what replaces gerbe enlargement for stabilizers $1$ and $\mu_p$ with $p$ odd?
\end{question}

Corollary~\ref{cor:sixsharp} gives a closed oriented smooth example attaining $R_2(6)$, with constant ineffective stabilizer and empty free locus. In higher dimensions, the target of the comparison in Theorem~\ref{thm:transfer} can receive further incoming differentials. For odd primes, the automorphisms of $\mu_p$ also allow band monodromy.

Appendix~\ref{sec:real} treats real regular bundles, including the first obstruction and the Stiefel connectivity estimates for the three representation types.

\appendix
\section{The first real obstruction}\label{sec:real}

The underlying real bundle of a complex regular bundle of rank $R$ is real regular of rank $2R$, since
\[
 \C[G]_{\R}\cong\R[G]\oplus\R[G].
\]
Theorems~\ref{thm:main} and \ref{thm:order_two} therefore give explicit real bounds. Theorem~\ref{thm:oriented_five} gives a real regular bundle of rank eight on every closed oriented smooth five-orbifold with stabilizers of order at most two. For real bundles, extension over the two-skeleton can already be obstructed, as the following example shows.

\begin{proposition}\label{prop:real}
The root gerbe $\G_2$ of \eqref{eq:rootgerbe} has minimum complex regular rank two and minimum real regular rank four.
\end{proposition}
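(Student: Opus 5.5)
The plan has two parts: rank two in the complex case and rank four in the real case. Each needs a construction and a matching lower bound.

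\emph{Complex rank two.} The bundle $\underline\C\oplus\eta$ is regular: at every point the stabilizer $\mu_2$ acts on $\eta$ by the sign character (Lemma~\ref{lem:pic}), so each fibre is $\C[\mu_2]$. Rank two is also the least possible, since any regular rank is a multiple of $|\mu_2|=2$.

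\emph{Real rank four, upper bound.} The underlying real bundle of $\underline\C\oplus\eta$ is real regular of rank four, because $\C[\mu_2]_\R\cong\R[\mu_2]^{\oplus2}$.

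\emph{Real rank four, lower bound.} Suppose $F$ is a real regular bundle of rank two. Use the band to split it into weight summands, exactly as in \eqref{eq:weights}. The $\pm1$-eigenspaces of the stabilizer give real line bundles: $F_+$, on which $\mu_2$ acts trivially, and $F_-$, on which it acts by the sign. I expect this step to be the main point. $F_-$ is a real line bundle on which the central $\mu_2$ acts by $-1$, so it is a real weight-one twisted line bundle. Its complexification $F_-\otimes\C$ is then a complex line bundle on $\G_2$ with the sign character, that is, an odd line bundle. By Lemma~\ref{lem:pic}, $F_-\otimes\C\cong\eta^{\otimes(2a+1)}$ for some $a\in\Z$. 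On the other hand, a complexified real line bundle is isomorphic to its own conjugate, so its class in $\Pic(\G_2)\cong\Z$ is $2$-torsion, hence zero. That contradicts the odd degree $2a+1$.

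Equivalently, one can argue with transition functions. $F_-$ has transition functions in $O(1)=\{\pm1\}$ whose triple products equal the gerbe cocycle. This would make the class $\xi$ the coboundary of a $\Z/2$ cochain, but $\xi\ne0$ in $H^2(S^2;\Z/2)$. Either formulation rules out real rank two. Since every real regular rank is a multiple of two, the minimum real regular rank is four.

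The step needing care is the identification of $F_-\otimes\C$ as an odd line bundle. I would check it directly from the stabilizer action on fibres, which the weight splitting gives pointwise, in the same way as in the proof of Lemma~\ref{lem:pic}.
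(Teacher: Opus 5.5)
Your proposal is correct and follows the paper's proof. It uses the same bundles $\underline\C\oplus\eta$ and $\underline\R^{\,2}\oplus\eta_\R$, and the same lower-bound argument: the complexified odd real line would be a $2$-torsion odd class in the torsion-free group $\Pic(\G_2)\cong\Z$. The paper gets the $2$-torsion from the $\pm1$ transition functions, whereas you get it from self-conjugacy, which is the same fact; your alternative \v{C}ech argument, in which an odd real line makes the $\Z/2$-cocycle of $\xi$ a coboundary, is also valid and bypasses Lemma~\ref{lem:pic}.
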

\begin{proof}
The complex bundle $1\oplus\eta$ has rank two and is regular. A real regular bundle of rank two would have an odd real line summand $\ell$. Give $\ell$ a metric. Its complexification has transition functions in $\{\pm1\}$, so $(\ell\otimes_\R\C)^{\otimes2}$ is trivial. It is also an odd complex line on $\G_2$. Lemma~\ref{lem:pic} says that every odd line has nonzero odd class in the torsion-free group $\Pic(\G_2)=\Z$, which is impossible for a line whose square is trivial. Finally, $\underline\R^{\,2}\oplus\eta_\R$ has two copies of each real stabilizer character and is regular of rank four.
\end{proof}

\Needspace{11\baselineskip}
For a real irreducible representation, its commuting division algebra is $\R$, $\C$, or $\mathbb H$. Isotypic multiplicity bundles accordingly have real, complex, or quaternionic structure. Their stable sphere groups are the corresponding $KO$, $K$, or $KSp$ groups, and their injection spaces have connectivity
\[
 \begin{array}{c|c}
 \text{space, for }b>0 & \text{connectivity}\\\hline
 V_b(\R^a)&a-b-1\\
 V_b(\C^a)&2(a-b)\\
 V_b(\mathbb H^a)&4(a-b)+2.
 \end{array}
\]
These formulas follow from the same sphere fibrations as in Lemma~\ref{lem:embedding}. The coefficient groups $\widetilde{KO}^0(S^1)$ and $\widetilde{KO}^0(S^2)$ are both $\Z/2$ \cite{Atiyah}; hence a real obstruction can occur at a two-cell, as the proposition demonstrates.

\Needspace{18\baselineskip}
\section*{Acknowledgments}
This paper continues the work of Enrique Becerra's doctoral thesis at CINVESTAV, supervised by Ernesto Lupercio.

Ernesto Lupercio acknowledges support from CONAHCYT (grant CB-2017-2018-A1-S-30345), the Simons Foundation (grant SFI-MPS-T-Institutes-00007697), and the Ministry of Education and Science of the Republic of Bulgaria (grant DO1-239/10.12.2024). He thanks CINVESTAV for his sabbatical leave, and the International Center for Mathematical Sciences--Sofia at the Institute of Mathematics and Informatics, Bulgarian Academy of Sciences, and the Institute for the Mathematical Sciences of the Americas at the University of Miami for their hospitality and support.

Claude (Anthropic) and ChatGPT (OpenAI) assisted with checking calculations and arguments and improving the English exposition. The mathematical ideas are due to the authors. The authors take responsibility for the results and their presentation.

\clearpage
\bibliographystyle{amsplain}
\bibliography{regular_bundles_II}
\end{document}